\numberwithin{equation}{section}
\theoremstyle{plain}
\newtheorem{theorem}{Theorem}[section]
\newtheorem{corollary}[theorem]{Corollary}
\newtheorem{lemma}[theorem]{Lemma}
\newtheorem{proposition}[theorem]{Proposition}
\theoremstyle{definition}
\newtheorem{definition}[theorem]{Definition}
\newtheorem{example}[theorem]{Example}
\newtheorem{remark}[theorem]{Remark}
\def\st#1#2{\begin{bmatrix}
#1 \\ #2 \end{bmatrix}}
\def\sts#1#2{\begin{Bmatrix}
#1 \\ #2 \end{Bmatrix}}
\newcommand{\eulerian}[2]{\left\langle \begin{matrix}
#1 \\ #2 \end{matrix} \right\rangle}
\def\piros#1{\underline{#1}}%
\def\kek#1{{\overline{#1}}}%
\newcommand{\ds}{\displaystyle}
\DeclareMathOperator{\Li}{Li}
\newcommand{\floor}[1]{\lfloor #1 \rfloor}
\newcommand{\ceil}[1]{\lceil #1 \rceil}
\title
{Toppling on permutations with an extra chip}
\author{Arvind Ayyer}
\address{Arvind Ayyer, Department of Mathematics, Indian Institute of Science, Bangalore - 560012, India}
\email{arvind@iisc.ac.in}
\author{Be\'ata B\'enyi}
\address{Be\'{a}ta B\'enyi, Faculty of Water Sciences, University of Public Service, Baja, Hungary}
\email{benyi.beata@uni-nke.hu}
\date{\today}
\begin{document}

\begin{abstract}
The study of toppling on permutations with an extra labeled chip was initiated by the first author with D. Hathcock and P. Tetali 
(arXiv:2010.11236), where the extra chip was added in the middle.
We extend this to all possible locations $p$ as well as values $r$ of the extra chip and give a complete characterization of permutations which topple to the identity. 
Further, we classify all permutations which are outcomes of the toppling process in this generality, which we call resultant permutations. Resultant permutations turn out to be certain decomposable permutations. The number of configurations toppling to a given resultant permutation is shown to depend purely on the number of left-to-right maxima (or records) of the permutation to the left of $p$ and the number of right-to-left minima to the right of $p$.
The number of permutations toppling to a given resultant permutation (identity or otherwise) is shown to be the binomial transform of a poly-Bernoulli number of type B.
\end{abstract}

\keywords{toppling, permutations, poly-Bernoulli numbers of type B, poly-Bernoulli numbers of type C, left-to-right maxima, right-to-left minima, binomial transform, Vesztergombi permutations, Callan permutations}
\subjclass[2010]{05A15, 05A10, 05A19}

\maketitle


\section{Introduction}


Chip-firing (also called abelian sandpile model) is a stochastic discrete dynamical system defined on a graph. 
Hopkins--McConville--Propp~\cite{HMP} introduced a labeled version of the chip-firing process on the infinite path graph. They showed the remarkable property that when the chips start at the origin and the number of chips is even the chips always end up in sorted order. Many variants of this original problem have been considered since then~\cite{galashin-et-al-2019, Galashin2021, hopkins-postnikov-2019,  klivans-liscio-2020+, klivans-felzenszwalb-2021}.

The first author with D. Hathcock and P. Tetali~\cite{AHT} considered a variant where a permutation of chips labeled $1$ through $n$ were placed contiguously on the path graph and one extra chip, labelled $r$ was added in the middle. In that case, they showed that the final configuration is deterministic. 
They defined a toppleable permutation to be one which results in the identity permutation for all values of $r$, and showed that
the number of toppleable permutations is a poly-Bernoulli number of type C, even though they did not use that language.

In this paper we investigate this model in much greater generality and 
explain the connection to the combinatorics of the poly-Bernoulli numbers. 
Poly-Bernoulli numbers of types B and C arise naturally in many combinatorial problems, such as permutations according to excedance sets, acyclic orientations of bipartite graphs, lonesum matrices, Vesztergombi permutations and a lot more; see \cite{benyi-hajnal-2017} for a review.

Our first results are a natural generalization of the formulation of \cite{AHT}. We add the extra chip $r$ to an arbitrary site $p$ in the permutation.
We consider toppleability in two different but related ways. In the first, we fix $p$ and count all possible configurations that result in a sorted configuration. In the second, we count permutations which are toppleable for fixed $r$ and $p$.
In both cases, we show that the numbers are related to the poly-Bernoulli numbers of type B. 
Generalizing \cite[Theorem 2.4]{AHT}, we also count permutations which are toppleable for all $r$ for a fixed $p$.
In this case, we show that we always obtain poly-Bernoulli numbers of type C. 

Our second results give a characterization of all the permutations that can arise as the result of a toppling procedure, which we call resultant permutations. 
It turns out that resultant permutations can be succintly characterized by left-to-right maxima to the left of $p$ and right-to-left minima to the right of $p$.
We also enumerate toppling to these resultant permutations.
Here too we study configurations with a fixed $p$ and permutations with a fixed $r$ and $p$. In both cases, we show that the enumeration is related to the poly-Bernoulli numbers of type B. 

The plan of the rest of this article is as follows.
In \cref{sec:model}, we introduce the model, give the necessary background and state the main results. 
We give a self-contained summary of combinatorial aspects of poly-Bernoulli numbers of types B and C in \cref{sec:poly-bernoulli}.
\cref{sec:toppleable} is devoted to the understanding of configurations and permutations which topple to the sorted configuration.
Finally, we classify all resultant permutations and enumerate the number of configurations and permutations that topple to them in \cref{sec:finalconfigs}.

\section{The toppling model}
\label{sec:model}

Let $L_n$ be the line segment $L_n=\{0,1,\ldots,n+1\}$. 
We distribute $n+1$ chips labeled by $\{1,2,\ldots, n+1\}$ on $L_n$ as follows: we first distribute $n$ of these chips on the sites $1$ 
through $n$, and then we add the remaining chip on a site $p$, $1 \leq p \leq n$.
The set of all such configurations is denoted $\mathcal{S}(n,p)$.

We define a dynamical system on $L_n$ by the process of \emph{toppling}, which is defined as follows:

\begin{itemize}
	\item[1.] If no position in $L_n$ has two or more chips stop. Else, go to step 2.
	\item[2.] Choose a position $i$ uniformly at random among positions occupied by more than one chip.
	\item[3.] Pick two chips $\alpha<\beta$ uniformly from those at site $i$.
	\item[4.] Move $\alpha$ to position $i-1$ and $\beta$ to $i+1$.
	\item[5.] Go to step 1. 
\end{itemize}

For instance, the configuration denoted by $C=(7,3,1,5,(2,4),6,8) \in \mathcal{S}(7,5)$ has two chips on the fifth site and is depicted as
\begin{equation}
\label{eg-config}
\begin{array}{ccccccccc}
	&&&&&4&&&\\
	&7&3&1&5&2&6&8\\\hline
	0&1&2&3&4&5&6&7&8
\end{array}.
\end{equation}
Let $S_n$ be the set of permutations of $[n]=\{1,2,\ldots, n\}$.
Configurations in $\mathcal{S}(n,p)$ naturally arise from permutations in $S_n$. Given a permutation $\pi=\pi_1\pi_2\ldots\pi_n$ in one-line notation and an element $r\in [n+1]$, we first place the chips labeled by $\pi_i$ on site $i$ for $1 \leq i \leq n$. We then place the chip labeled by $r$ on site $p$ and increase each value $\pi_i\geq r$ by $1$. 
It is easy to see that every configuration can arise in two ways from a permutation in the above described way by the choice of $r$ at site $p$. 
For example, the configuration in \eqref{eg-config} arises from the permutation $\pi=6214357$ with $r=2$ and from the permutation $\sigma = 6314257$ with $r=4$.
An initial configuration $C \in \mathcal{S}(n,p)$ that arises from a permutation $\pi \in S_n$ and $r \in [n+1]$ is denoted $\pi^{(r,p)}$. 
The following result follows from the proof of \cite[Proposition 2.1]{AHT} with almost no change.

\begin{proposition}
\label{prop:determ}
\begin{enumerate}
\item At every step the configuration lives in $L_n$. No chips moves to the left of the site $0$ or to the right of the site $n+1$.
\item The final configuration is deterministic containing exactly one chip on every site except one. 
\end{enumerate}
\end{proposition}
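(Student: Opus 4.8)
The plan is to linearize the dynamics by passing to prefix sums, which turns toppling into a transparent binary-string sorting and makes all three assertions (containment, termination, and determinism) fall out together. Writing $c_i$ for the number of chips at site $i$, I would set $s_k=\sum_{i\le k}c_i$ for $-1\le k\le n+1$ and put $t_k=s_k-k$. A one-line computation shows that toppling at site $j$ leaves every $s_k$ fixed except $s_{j-1}\mapsto s_{j-1}+1$ and $s_j\mapsto s_j-1$; equivalently it performs $t_{j-1}\mapsto t_{j-1}+1$, $t_j\mapsto t_j-1$, and it is legal precisely when $c_j\ge 2$, i.e. when $t_j\ge t_{j-1}+1$. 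For a configuration $\pi^{(r,p)}$ with the surplus chip at $p$ one checks that the initial $t$ is the $0/1$ indicator string with $t_{-1}=1$, $t_k=0$ for $0\le k\le p-1$, $t_k=1$ for $p\le k\le n$, and $t_{n+1}=0$. The master observation is then immediate: on a $0/1$ string the only admissible move is to swap an adjacent pattern $01$ into $10$ (since $t_j\ge t_{j-1}+1$ forces $t_{j-1}=0$ and $t_j=1$), so $t$ stays $0/1$-valued forever. I would isolate this invariant first, as everything else is a corollary.

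From $t\in\{0,1\}$ I would read off \cref{prop:determ}(1) at once: a toppling at the left wall $j=0$ would require $t_0\ge t_{-1}+1=2$ and one at the right wall $j=n+1$ would require $t_{n+1}\ge t_n+1$, both impossible, so $t_{-1}=1$ and $t_{n+1}=0$ are frozen and no chip ever leaves $L_n$. Termination is equally clean: each move shifts a $1$ one step to the left, strictly decreasing the number of inversions (pairs $k<l$ with $t_k=0$ and $t_l=1$), which bounds the total number of topplings. The process halts exactly when $t$ is sorted, i.e. $t=1\cdots1\,0\cdots0$; since the number of $1$'s (including the frozen $t_{-1}=1$) is conserved and equals $n-p+2$, the terminal string is forced, and translating back through $c_i=t_i-t_{i-1}+1$ yields $c_i\in\{0,1\}$ with a single empty site, namely $e=n+1-p$. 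This already supplies the unlabelled content of part (2).

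It remains to promote this to the deterministic \emph{labelled} configuration. Here the payoff of the invariant is that, since $c_j=t_j-t_{j-1}+1$ with $t\in\{0,1\}$, we have $c_j\le 2$, so $c_j\ge 2$ forces $c_j=2$: every unstable site carries exactly two chips and the rule has no internal freedom, the unique smaller chip going left and the unique larger one going right. Moreover two unstable sites are never adjacent, since $c_j=2$ and $c_{j+1}=2$ would demand $t_j=1$ and $t_j=0$ at once. Thus the only nondeterminism in the whole process is the order in which one topples distinct unstable sites $j<j'$ with $j'\ge j+2$. I would close the diamond for such a pair directly: their topplings touch overlapping sites (only the shared site $j+1=j'-1$ when $j'=j+2$, and none otherwise) solely by \emph{adding} prescribed chips, an operation independent of order, and neither topple alters the contents of the other site; hence the two orders reach one common configuration. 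Local confluence together with termination then gives global confluence by Newman's lemma, so the final labelled configuration is independent of all random choices, which is exactly part (2).

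The only genuine obstacle is conceptual rather than computational: spotting the substitution $t_k=s_k-k$ that converts the apparently intricate dynamics---with its potential for tall piles and for real ambiguity once a site holds three or more chips---into the motion of a single binary string. Once that reformulation is in hand, the bounds $0\le c_i\le 2$, the absence of adjacent overloaded sites, and the commutation of separated topplings are all automatic, and the passage from the middle position of \cite{AHT} to an arbitrary site $p$ merely shifts the initial indicator interval $[p,n]$, which is why the statement follows from the proof of \cite[Proposition 2.1]{AHT} with almost no change.
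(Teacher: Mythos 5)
Your proof is correct; I checked each step: the effect of a topple on the prefix sums, the $0/1$ invariant, the frozen boundary values giving part (1), termination via the inversion count, the forced sorted terminal string, and the upgrade to labeled determinism via exactly-two-chip topples, non-adjacency of unstable sites, commutation of separated topples, and Newman's lemma. It is, however, a genuinely different route from the paper's: the paper offers no argument of its own for \cref{prop:determ}, instead invoking the proof of \cite[Proposition 2.1]{AHT}, which works directly on chip configurations --- it establishes by induction along the process that every site holds at most two chips and that two doubly-occupied sites are never adjacent, and then deduces that each topple is forced (smaller chip left, larger right) and that topples at distinct sites commute. Your substitution $t_k=s_k-k$ delivers exactly those invariants in one line, converts legality into the adjacent pattern $01$ and the move into the swap $01\mapsto 10$, and adds two things the paper's route does not make explicit: a potential function (the inversion count) showing the process stops after exactly $p(n-p+1)$ topples, and a fully self-contained proof with no dependence on \cite{AHT}. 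What the paper's route buys instead is infrastructure: the pass decomposition underlying the AHT argument is not just a proof device but is reused heavily in \cref{sec:toppleable,sec:finalconfigs}, so deferring to it costs the authors nothing later. Two small points of rigor if you write yours up: phrase the boundary step as an induction on a hypothetical first topple at site $0$ or $n+1$ (as written, ``$t_{-1}=1$ is frozen'' is invoked to rule out the very topple that could unfreeze it), and note explicitly that for a $0/1$ string the absence of adjacent $01$ patterns already forces the global form $1\cdots 1\,0\cdots 0$, which is what pins down the unique empty site at $n-p+1$.
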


Another important and useful property is the symmetry property.
This is easy to prove by analyzing what happens at a single toppling step.

\begin{proposition}[Symmetry]
	\label{prop:symm}
	For a positive integer $n$, fix $p \leq n-1$. The toppling dynamics on a configuration $C \in \mathcal{S}(n,p)$ is isomorphic to that of $\hat{C} \in \mathcal{S}(n,n-p)$, where $\hat{C}$ is obtained by reversing the direction and subtracting each chip value in $C$ from $n+1$.
\end{proposition}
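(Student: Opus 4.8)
The plan is to produce an explicit involution $\Phi\colon C\mapsto \hat C$ on configurations and to verify that it conjugates the toppling dynamics by examining a single toppling step. Let $\rho$ be the orientation-reversing reflection of $L_n$; the only properties we shall use are that $\rho$ interchanges the two neighbours of each interior site, i.e. $\rho(i-1)=\rho(i)+1$ and $\rho(i+1)=\rho(i)-1$, that it fixes the pair of boundary sites $\{0,n+1\}$, and that it carries the distinguished site $p$ to the distinguished site of the mirrored class $\mathcal{S}(n,n-p)$. Let $v\mapsto\bar v$ denote the order-reversing complementation of the chip values (``subtracting from $n+1$''), whose only relevant feature is that $\alpha<\beta$ implies $\bar\alpha>\bar\beta$. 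Define $\hat C=\Phi(C)$ by placing a chip of value $\bar v$ at site $\rho(i)$ whenever $C$ has a chip of value $v$ at site $i$. First I would record the routine structural facts: $\Phi$ is an involution, it sends singly-occupied sites to singly-occupied sites and the doubled site $p$ to its mirror image, and hence restricts to a bijection $\mathcal{S}(n,p)\to\mathcal{S}(n,n-p)$.

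The crux is to show that one toppling step commutes with $\Phi$. Suppose $C$ admits the move that topples at site $i$ using the pair $\alpha<\beta$, producing $C'$ with $\alpha$ at $i-1$ and $\beta$ at $i+1$. In $\hat C$ the site $\rho(i)$ carries the two chips $\bar\alpha,\bar\beta$ with $\bar\beta<\bar\alpha$, so it is an eligible toppling site; toppling there sends the smaller chip $\bar\beta$ to $\rho(i)-1=\rho(i+1)$ and the larger chip $\bar\alpha$ to $\rho(i)+1=\rho(i-1)$. On the other hand $\Phi(C')$ places $\bar\alpha$ at $\rho(i-1)$ (since $\alpha$ sits at $i-1$ in $C'$) and $\bar\beta$ at $\rho(i+1)$, leaving every other chip in its mirrored position. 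These two configurations coincide, so toppling $C$ at $i$ and then applying $\Phi$ gives the same result as applying $\Phi$ and then toppling $\hat C$ at $\rho(i)$. The heart of the matter is that the position reflection and the value complementation are \emph{both} order-reversing, and the two reversals cancel: the chip the rule sends left in $\hat C$ is exactly the image under $\Phi$ of the chip the rule sent right in $C$.

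It then remains to upgrade this single-step statement to an isomorphism of the full dynamics. The correspondence $i\leftrightarrow\rho(i)$ is a bijection between the sites of $C$ and of $\hat C$ occupied by more than one chip, and at each such site the pairing $\{\alpha,\beta\}\leftrightarrow\{\bar\alpha,\bar\beta\}$ is a bijection between the eligible pairs; hence the two uniform choices in steps~2 and~3 of the process correspond, and $\Phi$ yields a bijection between the toppling trajectories of $C$ and of $\hat C$ preserving their probabilities. By \cref{prop:determ} no chip ever leaves $L_n$ and the process terminates in a deterministic configuration, and since $\rho$ fixes $\{0,n+1\}$ the terminal configurations of $C$ and $\hat C$ are exchanged by $\Phi$. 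I expect no genuine obstacle here: the only point needing care is the compatibility of the two order-reversals verified above, while well-definedness and involutivity of $\Phi$ and the matching of the random choices are routine bookkeeping.
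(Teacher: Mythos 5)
Your proof is correct and takes exactly the approach the paper intends: the paper gives no written proof beyond the remark that the proposition ``is easy to prove by analyzing what happens at a single toppling step,'' which is precisely the single-step equivariance computation (the two order-reversals, of positions and of values, cancelling) that you carry out and then propagate through the whole process. One caveat you inherit from the paper's own statement: the reflection of $L_n=\{0,\dots,n+1\}$ that swaps the boundary sites and interchanges neighbours is $i\mapsto n+1-i$, which sends site $p$ to $n+1-p$ (with chip values complemented to $n+2-v$), so your three asserted properties of $\rho$ are not simultaneously satisfiable verbatim --- this off-by-one lies in the proposition as stated, not in the substance of your argument, which is sound.
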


\begin{definition}
\label{def:topp-config}
We say that a configuration $C \in \mathcal{S}(n,p)$ is  \emph{$p$-toppleable} if the final configuration is sorted.
\end{definition}

Our first main result is for the number of $p$-toppleable configurations. To state our results, we recall the relevant sequences. 
The well-known \emph{polylogarithm} function is given by
\[
\Li_k(z) =\sum_{i=1}^{\infty}\frac{z^i}{i^k}.
\]
When the base $k$ is a non-positive integer, then it is well-known to be a rational function. In particular, for a non-negative integer $m$,
\[
\Li_{-m}(z) = \frac{\ds \sum_{j=0}^{m-1} \eulerian mj z^{m-j} }{(1-z)^{m+1}},
\]
where $\eulerian mj$ is the number of permutations in $S_n$ with $j$ ascents. (Recall that a position $k$ is an ascent in a permutation if $\pi_k < \pi_{k+1}$.)
\emph{Poly-Bernoulli numbers of type B} are defined by the generating function, 
\begin{align}
\label{bnk-gf}
\sum_{n=0}^{\infty}B_{n,k}\frac{x^n}{n!} =\frac{\Li_{-k}(1-e^{-x})}{1-e^{-x}},
\end{align} 
where $k \geq 0$. The reason for this terminology is that $\Li_1(x) = -\log(1-x)$ and so, 
\[
\frac{\Li_{1}(1-e^{-x})}{1-e^{-x}} = \frac{x}{1-e^{-x}},
\]
the generating function of the \emph{Bernoulli numbers}.
The related family of 
\emph{poly-Bernoulli numbers of type C} are defined by the generating function, 
\begin{align}
\label{cnk-gf}
\sum_{n=0}^{\infty}C_{n,k} \frac{x^n}{n!} = \frac{\Li_{-k}(1-e^{-x})}{e^x-1}.
\end{align}
It is also the value of the Arakawa-Kaneko function \cite{ArakawaKaneko}
\[
\xi_k(-n)= (-1)^n C_{n,k},
\]
where the Arakawa-Kaneko function is defined as
\[
\xi_k(s):= \frac{1}{\Gamma(s)}\int_0^{\infty}\frac{t^{s-1}}{e^t-1}\Li_k(1-e^{-t})dt.
\] 
See \cref{tab:data-polybern}(a) and (b) for the first few poly-Bernoulli numbers of types B and C respectively. 
In particular, notice the nontrivial symmetries, $B_{n,k} = B_{k,n}$ and $C_{n+1,k} = C_{k+1,n}$.
\cref{sec:poly-bernoulli} contains more details about these numbers and their relation to combinatorics.

\begin{table}[h!]
	\begin{center}
	\begin{tabular}{cc}
		\begin{tabular}{|c||c|c|c|c|c|c|}
			\hline
			$n \backslash k$
			& 0 & 1 & 2 & 3 & 4 & 5\\
			\hline\hline
			0 & 1 & 1& 1& 1 & 1 & 1\\
			\hline
			1 & 1 & 2 & 4 & 8 & 16 & 32\\
			\hline
			2 & 1 & 4 & 14 & 46 & 146 & 454 \\
			\hline
			3 & 1 & 8 & 46 & 230 & 1066 & 4718 \\
			\hline
			4 & 1 & 16 & 146 & 1066 & 6906 & 41506\\
			\hline
			5 & 1 & 32 & 454 & 4718 & 41506 & 329462\\
			\hline
		\end{tabular}
	&
	\begin{tabular}{|c||c|c|c|c|c|c|}
		\hline
		$n \backslash k$ & 0 & 1 & 2 & 3 & 4 & 5 \\
		\hline\hline
		0  & 1 & 0 & 0 & 0 & 0 & 0\\
		\hline
		1  & 1 & 1 & 1& 1& 1 & 1\\
		\hline
		2   & 1 & 3 & 7 & 15 & 31 & 63 \\
		\hline
		3  & 1 & 7 & 31 & 115 & 391 & 1267 \\
		\hline
		4 & 1 & 15 & 115 & 675 & 3451 & 16275\\
		\hline
		5  & 1 & 31 & 391 & 3451 & 25231 & 164731\\
		\hline
	\end{tabular}\\
	(a) & (b)
	\end{tabular}
	\end{center}
	\caption{The poly-Bernoulli numbers $B_{n,k}$ in (a) and $C_{n,k}$ in (b) for $0 \leq n,k \leq 5$.}
\label{tab:data-polybern}
\end{table}

\begin{theorem}\label{thm: top-pB}
The number of $p$-toppleable configurations in $\mathcal{S}(n,p)$ is given by $B_{n-p+1,p}/2$.
\end{theorem}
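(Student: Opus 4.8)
The plan is to pass from configurations to permutations and count the latter. Since every configuration in $\mathcal{S}(n,p)$ arises from exactly two pairs $(\pi,r)$, and $p$-toppleability is a property of the configuration alone, the number of $p$-toppleable configurations equals half the number of pairs $(\pi,r)$ for which $\pi^{(r,p)}$ topples to the sorted configuration. Thus it suffices to prove that there are exactly $B_{n-p+1,p}$ toppleable pairs. I would first record a structural constraint that makes the two indices natural: the quantity $\sum_i(\text{position of chip } i)$ is preserved by every toppling move (the two chips move one step left and one step right), so by \cref{prop:determ} every final configuration arising from $\mathcal{S}(n,p)$ has its unique empty site at position $n+1-p$. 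Hence the sorted target splits into a left block of $n+1-p$ chips and a right block of $p$ chips, matching the indices of $B_{n-p+1,p}$.

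The core step is to decide, for a given initial configuration, whether its deterministic final configuration is sorted. Using the abelian property in \cref{prop:determ}, I would fix a convenient toppling order: topple the doubled site $p$ first, then follow the two resulting excesses as a leftward wave through sites $p-1,p-2,\ldots$ and a rightward wave through $p+1,p+2,\ldots$. Carrying out one such sweep expresses each final entry as a nested running minimum/running maximum of the initial chip values; the global minimum is deposited at site $0$, the global maximum at site $n+1$, and each intermediate entry is obtained by comparing a carried extremum with the resident chip. From these closed forms I would extract a necessary and sufficient criterion for sortedness. I expect this criterion to decouple across site $p$: the chips weakly to the left of $p$ must realize the small block $\{1,\ldots,n+1-p\}$ as an increasing sequence of records (left-to-right maxima), and the chips weakly to the right must realize the large block $\{n+2-p,\ldots,n+1\}$ as the mirror image (right-to-left minima), the two halves being linked only through the pair sitting at $p$.

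With the criterion in hand, the enumeration should be the more routine part. I would translate the record conditions into a $0/1$ incidence pattern and match toppleable pairs bijectively with $(n+1-p)\times p$ lonesum matrices, which are counted by $B_{n-p+1,p}$; equivalently, one could match them with the Vesztergombi or Callan permutations enumerated by the same numbers. As an alternative to a bijection, one can compute the exponential generating function of the count in the free index and identify it with the right-hand side of \eqref{bnk-gf}. Either route yields $B_{n-p+1,p}$ toppleable pairs, and consistency with the reflection $p\leftrightarrow n+1-p$ coming from \cref{prop:symm}, together with the symmetry $B_{n-p+1,p}=B_{p,n-p+1}$, provides a useful check.

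The main obstacle is establishing the sortedness criterion rigorously. The difficulty is that after the first move the doubled site spawns two new doubled sites and subsequent moves cascade, so knowing the (configuration-independent) toppling counts at each site is not enough; one must track which \emph{label} lands where. I would control this with the abelian property together with a monovariant showing that no two chips ever cross in the wrong order across the eventual empty site $n+1-p$, so that the single min/max sweep computed above genuinely records the final positions. Once the criterion and the count of $B_{n-p+1,p}$ toppleable pairs are secured, dividing by two gives the claimed $B_{n-p+1,p}/2$.
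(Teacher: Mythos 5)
Your high-level skeleton is the same as the paper's: reduce to counting marked configurations (equivalently, pairs $(\pi,r)$) at the cost of a factor of $2$, characterize which initial configurations topple to the sorted configuration, and then biject with a family counted by $B_{n-p+1,p}$. Your position-sum invariant, which forces the unique empty site of any final configuration to be $n+1-p$, is correct and is a clean substitute for the paper's pass-based observation (\cref{rem:passes}(3)). However, the core of your argument, the sortedness criterion, has a genuine gap --- in fact two.

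First, a single left/right sweep from site $p$ does not determine the final configuration. After one such sweep (the paper's ``first pass''), site $p$ is again occupied by two chips, and the chips deposited along the way are \emph{not} in their final positions: they keep moving in later passes, of which there are $\min(p,n+1-p)$ in total. So the ``nested running minimum/maximum'' closed form only describes the configuration after one pass, not the final one, and your proposed monovariant (``no two chips cross in the wrong order across site $n+1-p$'') is too vague to bridge the remaining passes; the paper needs two genuinely inductive lemmas (\cref{lem:j_pass_char,lem:j_pass_char_2}) that track, pass by pass, how far each chip can sit from its target. Second, the criterion you expect to extract is not the right one. The correct characterization (\cref{thm:config_char}) is a positional band condition, $p+i-n-1\le C^{-1}(i)\le p+i-1$ for every chip $i$; it does not decouple into record conditions on the initial configuration. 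Concretely, $2(13)4\in\mathcal{S}(3,2)$ and $(2,1,(3,6),4,5)\in\mathcal{S}(5,3)$ both topple to the identity (the first appears in \cref{table:data-p-resultant}), yet in each one the small-block chips weakly to the left of $p$ read $2,1,\ldots$, which is not an increasing sequence of records. Left-to-right maxima and right-to-left minima govern which permutations can be \emph{resultant} (\cref{lem: topp_lrm}), not which configurations topple to the identity. Because the criterion is wrong, the concluding bijection with lonesum matrices has nothing to stand on. Note also that once the band condition is proved, no lonesum detour is needed: inserting the marked chip converts that condition directly into the Vesztergombi band condition (\cref{prop: perm_char}), and \cref{thm:num-veszt} then gives the count $B_{n-p+1,p}$ before dividing by two --- which is exactly the paper's route.
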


Although it is not obvious from any of the definitions, it turns out that $B_{n,k}$ is even if $n,k > 0$; see \cref{rem:bnk-even}. As a consequence, \cref{thm: top-pB} makes sense.
We also formulate an analogous definition to \cref{def:topp-config} in terms of the corresponding permutations. 

\begin{definition}
\label{def:topp-perm}
	We say that a permutation $\pi \in S_n$ is \emph{$(r,p)$-toppleable} if starting with the configuration $\pi^{(r,p)}$ the final configuration of the toppling process is sorted. 
\end{definition}

We denote the set of $(r,p)$-toppleable permutations of $[n]$ by $\mathcal{T}_n^{(r,p)}$.

Let $\Delta$ be the discrete (forward) difference operator,
i.e. for any function $f(n)$, $\Delta(f(n)) = f(n+1) - f(n)$. Then the higher difference operators are obtained by composition. For example, $\Delta^2 (f(n)) = f(n+2) - 2 f(n+1) + f(n)$. Note that $\Delta^0 (f(n)) = f(n)$. 
Given a sequence $(a_n)$, it's \emph{binomial transform}~\cite[Section 5.2.2, Exercise 36]{knuth-taocp3} is given by the sequence $(b_n)$ where
\[
b_n = \sum_{k=0}^{n}(-1)^{k}\binom{n}{k} a_k.
\]
It is easy to see that $\Delta^{m}f(n)$ is $(-1)^m$ times the $(n+m)$'th term of the binomial transform of the sequence $(f(n))$.
Our second main result is on the enumeration of $(r,p)$-toppleable permutations.

\begin{theorem}
\label{thm:r-p-top-perm}
Let $n,p,r$ be integers satisfying $1\leq p\leq n$, $1 \leq r\leq n-p+1$. Then 
$|\mathcal{T}_n^{(r,p)}| = \Delta^{r-1} \big( B_{n-p+1-r,p} \big)$, where $\Delta$ acts on the first index.
\end{theorem}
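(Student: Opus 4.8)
The plan is to pass from permutations to configurations, turn the closed form into a two-term recursion in $r$, and then feed \cref{thm: top-pB} in as the base case.

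\emph{Reduction to marked configurations.} Fix $p$, and for $1\le r\le n-p+1$ let $a_r(n)$ denote the number of $p$-toppleable configurations in $\mathcal{S}(n,p)$ one of whose two chips on site $p$ is $r$. For a fixed value $r$ the map $\pi\mapsto\pi^{(r,p)}$ is injective, since both $\pi$ and $r$ can be recovered from the configuration, and its image is exactly the set of configurations carrying the chip $r$ on site $p$ (recall that each configuration arises in two ways, according to which of its two chips on site $p$ plays the role of $r$). As $\pi$ is $(r,p)$-toppleable exactly when $\pi^{(r,p)}$ is $p$-toppleable by \cref{def:topp-perm} and \cref{prop:determ}, we get $|\mathcal{T}_n^{(r,p)}|=a_r(n)$, and the theorem becomes a statement about the integers $a_r(n)$.

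\emph{From the formula to a recursion.} Put $f(m)=B_{m,p}$ and $G(n,r)=(\Delta^{r-1}f)(n-p+1-r)$, so that the assertion is $a_r(n)=G(n,r)$. Writing $\Delta^{r-1}=\Delta\circ\Delta^{r-2}$ and evaluating the outer difference yields
\begin{equation}
\label{eq:Grec}
G(n,r)=G(n,r-1)-G(n-1,r-1),\qquad G(n,1)=B_{n-p,p},
\end{equation}
where every index remains inside the admissible ranges for $2\le r\le n-p+1$ (and for $p=n$ only the base case $r=1$ occurs). It therefore suffices to show that the counts $a_r(n)$ satisfy the same initial condition and the same recursion, after which the theorem follows by induction on $r$, using the hypothesis for the index $r-1$ at both $n$ and $n-1$.

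\emph{The two statements to establish.} The base case $a_1(n)=B_{n-p,p}$ counts the $p$-toppleable configurations carrying the smallest chip on site $p$, and I would obtain it from \cref{thm: top-pB} (for $p\le n-1$ it is twice the toppleable count of $\mathcal{S}(n-1,p)$). The recursion, rewritten as $a_{r-1}(n)=a_r(n)+a_{r-1}(n-1)$, should follow from a partition of the $p$-toppleable configurations of $\mathcal{S}(n,p)$ carrying $r-1$ on site $p$ into two blocks: one that, after deleting a single chip and a single site, becomes a $p$-toppleable configuration of $\mathcal{S}(n-1,p)$ still carrying $r-1$ on site $p$, and a complementary one that maps, by an operation raising the chip on site $p$ from $r-1$ to $r$, onto the configurations carrying $r$ on site $p$.

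\emph{Main obstacle.} The difficulty lies entirely in verifying that these reduction and raising operations preserve $p$-toppleability and are bijective; this requires a workable characterization of toppleable configurations. The tool I would use is a two-colouring reduction. A conserved-sum argument (a toppling sends the two chips at $i$ to $i-1$ and $i+1$, preserving the total of the positions) forces the final occupied sites of \emph{every} configuration in $\mathcal{S}(n,p)$ to be $\{0,1,\dots,n+1\}\setminus\{n+1-p\}$, independently of the labels. Moreover, since a toppling always moves the smaller chosen chip left and the larger right, for each threshold $v$ the dynamics projects to the toppling of the two colour classes $\{1,\dots,v\}$ and $\{v+1,\dots,n+1\}$; by confluence the final colouring is the projection of the final configuration, so a configuration is $p$-toppleable if and only if, for every $v$, the class $\{1,\dots,v\}$ comes to rest on the $v$ leftmost occupied sites. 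Recasting the deletion and raising operations in terms of these separation conditions, and checking that each is preserved, is the step I expect to be the most delicate, and is where the structure behind \cref{thm: top-pB} must be used in full.
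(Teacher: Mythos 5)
Your reduction to marked configurations is correct, and so is the algebra: writing $G(n,r)=\Delta^{r-1}\big(B_{n-p+1-r,p}\big)$, the identity $G(n,r)=G(n,r-1)-G(n-1,r-1)$ with base $G(n,1)=B_{n-p,p}$ does hold, so the theorem is equivalent to the two combinatorial statements $a_1(n)=B_{n-p,p}$ and $a_{r-1}(n)=a_r(n)+a_{r-1}(n-1)$. This is in fact exactly the shape of the paper's own argument: the base case is \cref{prop:1p_top_perm} together with the corollary following it, your two-term recursion is the difference of consecutive cases of \cref{thm: general-rec}, and the theorem is then deduced by induction, just as you set up. So the skeleton is sound and parallels the paper.

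The gap is that neither of the two statements is proved, and they are the entire content. For the base case you assert that the toppleable configurations with chip $1$ at site $p$ number twice the toppleable count of $\mathcal{S}(n-1,p)$; this requires the argument (the paper's \cref{prop:1p_top_perm}) that in the first left pass chip $1$ freezes at site $0$ and that what remains, after relabeling, is an arbitrary toppleable configuration of $\mathcal{S}(n-1,p)$ with an arbitrary marked chip at site $p$. For the recursion you never define the partition, the deletion map, or the raising map: which configurations fall in which block, what ``raising $r-1$ to $r$'' does to the rest of the configuration, and why these operations land bijectively in the claimed sets are all left open --- and you flag this yourself as the most delicate step. The two-colouring criterion you propose to close the gap is true (the conserved-sum computation and the monotone projection are both valid), but it is essentially a restatement of toppleability via confluence, not a checkable condition on the \emph{initial} configuration; producing such a condition is precisely what \cref{thm:config_char} accomplishes with the window inequalities $p+i-n-1\le C^{-1}(i)\le p+i-1$. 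Once that characterization, or equivalently its translation to Vesztergombi and Callan permutations in \cref{prop: perm_char}, is in hand, your deletion/raising operations become the paper's operation of deleting the first entry of a Callan permutation, where well-definedness and bijectivity are immediate. As it stands, the proposal correctly reduces the theorem to two claims but proves neither, so it is a proof plan rather than a proof.
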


We then recover one of the main results of \cite{AHT} as a special case.

\begin{corollary}[{\cite[Theorem 3.4 and Lemma 3.5]{AHT}}]
\label{cor:aht-theorem}
	Let $n$ be an odd integer and $p=\floor{(n+1)/2}$. Then for $r=p$ and $r=p+1$,  
$|\mathcal{T}_n^{(r,p)} | = C_{\floor{(n-1)/2},\floor{(n-1)/2}}$.  
\end{corollary}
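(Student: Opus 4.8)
The plan is to specialize \cref{thm:r-p-top-perm} to the two prescribed values of $r$ and then to recognize the resulting quantity as a poly-Bernoulli number of type C. Write $n=2m+1$, so that $p=\floor{(n+1)/2}=m+1$ and $\floor{(n-1)/2}=m$. The two cases sit quite differently with respect to the hypotheses of \cref{thm:r-p-top-perm}. For $r=p=m+1$ we have exactly $r=n-p+1$, the upper endpoint of the admissible range $1\le r\le n-p+1$, so the theorem applies verbatim and yields $|\mathcal{T}_n^{(p,p)}| = \Delta^{p-1}\big(B_{n-2p+1,p}\big) = \Delta^{m}\big(B_{0,m+1}\big)$. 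For $r=p+1=m+2$, however, $r$ exceeds $n-p+1=m+1$, so \cref{thm:r-p-top-perm} cannot be invoked directly; supplying a replacement for this case is the crux.

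To treat $r=p+1$ I would use the reflection symmetry of \cref{prop:symm}. The key point is that $p=m+1$ is the central site of $L_n=\{0,1,\dots,2m+2\}$, so the reflection fixes the position $p$ while sending the extra-chip value $r$ to its complement; geometrically, reflecting the configuration through the midpoint of $L_n$ interchanges the two candidate values $r=p=m+1$ and $r=p+1=m+2$. Consequently the symmetry furnishes a count-preserving bijection between $(p,p)$-toppleable and $(p+1,p)$-toppleable permutations, whence $|\mathcal{T}_n^{(p+1,p)}|=|\mathcal{T}_n^{(p,p)}|=\Delta^{m}\big(B_{0,m+1}\big)$. Both cases are thereby reduced to evaluating the single quantity $\Delta^{m}\big(B_{0,m+1}\big)$.

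It remains to identify $\Delta^{m}\big(B_{0,m+1}\big)$ as the asserted type-C number. For this I would exploit the relation between the two families coming from the generating functions \eqref{bnk-gf} and \eqref{cnk-gf}: since $\tfrac{1}{e^{x}-1}=e^{-x}\cdot\tfrac{1}{1-e^{-x}}$, multiplying the type-B generating function by $e^{-x}$ shows that the type-C numbers are precisely the binomial transform of the type-B numbers in the first index, so that $\Delta^{m}\big(B_{0,k}\big)$ is a poly-Bernoulli number of type C for each $k$; combining this with the symmetry $C_{n+1,k}=C_{k+1,n}$ recorded after \eqref{cnk-gf} casts the value in the form stated. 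I expect the genuine obstacle to be the $r=p+1$ case, which lies outside the range of \cref{thm:r-p-top-perm}: only the observation that the central site is a fixed point of the reflection—so that the two admissible values of $r$ are exchanged rather than pushed out of range—lets \cref{prop:symm} close the argument, after which the generating-function bookkeeping is routine.
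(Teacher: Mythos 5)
Your reduction is sound up to the last step, and your treatment of $r=p+1$ is a genuinely different (and arguably cleaner) route than the paper's: the paper never reflects the line, but instead passes to Callan permutations via \cref{prop: perm_char}, deletes the first element, and invokes the underline/overline symmetry of Callan permutations together with \cref{thm:polyC-objects}(4); you instead use the geometric reflection, correctly observing that for $n=2m+1$ the central site $p=m+1$ is fixed while the chip values $r=m+1$ and $r=m+2$ are exchanged. (One caveat: \cref{prop:symm} as printed sends $p\mapsto n-p$ and values $v\mapsto n+1-v$, which is an off-by-one misprint; the reflection you describe, $p\mapsto n+1-p$ and $v\mapsto n+2-v$, is the correct one, as the palindromic rows of \cref{tab:n4 and 5rp} confirm.) You are also right that $r=p+1$ lies outside the range of \cref{thm:r-p-top-perm}, a point the paper's one-line first justification glosses over.

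The gap is your final identification, which is not ``routine bookkeeping'' but false: $\Delta^{m}\bigl(B_{0,m+1}\bigr)$ does not equal $C_{m,m}$. Numerically, for $n=3$ ($m=1$) one gets $\Delta^{1}(B_{0,2})=B_{1,2}-B_{0,2}=4-1=3$ while $C_{1,1}=1$, and for $n=5$ ($m=2$) one gets $\Delta^{2}(B_{0,3})=46-16+1=31$ while $C_{2,2}=7$; the value $31$ is exactly what the paper's own \cref{tab:n4 and 5rp} records at $p=3$, $r=3,4$. The correct identity, in the convention of \cref{tab:data-polybern} and \cref{thm:polyC-objects}, is $\Delta^{m}\bigl(B_{0,m+1}\bigr)=C_{m+1,m}$ (use $B_{n,k}=B_{k,n}$ and invert the first relation in \eqref{relationB_C}), and since $C_{m+1,m}$ is a \emph{fixed point} of the symmetry $C_{n+1,k}=C_{k+1,n}$, no application of that symmetry can turn it into $C_{m,m}$. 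What your computation actually uncovers is that the corollary as printed is mis-indexed: the right-hand side should be $C_{\floor{(n+1)/2},\floor{(n-1)/2}}$. This corrected form is what the paper's own Callan-permutation proof produces, and it is forced by \cref{thm:top-perms-fixed-p}, whose count $C_{p,n-p}=C_{m+1,m}$ concerns a subset of $\mathcal{T}_n^{(p,p)}$ and hence cannot exceed $|\mathcal{T}_n^{(p,p)}|$ (impossible if the latter were $C_{m,m}=7<31$ for $n=5$). Note also that the second identity in \eqref{relationB_C}, on which your ``inversion'' step implicitly leans, is itself mis-indexed (its right-hand side equals $C_{k,n}$, not $C_{n,k}$, in the tabulated convention), so it cannot rescue the step. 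A correct write-up would carry out your two reductions and conclude $|\mathcal{T}_n^{(p,p)}|=|\mathcal{T}_n^{(p+1,p)}|=C_{\floor{(n+1)/2},\floor{(n-1)/2}}$, flagging the typo in the statement, rather than asserting that the printed form follows.
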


We next enumerate permutations in $S_n$ which are $(r,p)$ toppleable for every $r$ with arbitrary but fixed $p$. This generalizes \cite[Theorem 2.4]{AHT} for $p = \ceil{n/2}$, where these were called toppleable permutations.
\begin{theorem}
\label{thm:top-perms-fixed-p}
Fix a positive integer $n$ and $p$, $1 \leq p \leq n$. Then 
$\pi \in \ds\bigcap_{r=1}^{n+1} \mathcal{T}_n^{(r,p)}$ if and only if $p + i - n \leq \pi^{-1}_i \leq p + i - 1$, $1 \leq i \leq n$.
Further, the number of such permutations is $C_{p,n-p}$.
\end{theorem}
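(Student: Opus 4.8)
The plan is to deduce the statement from the characterization of the individual sets $\mathcal{T}_n^{(r,p)}$ obtained in \cref{sec:toppleable} (the same characterization underlying \cref{thm:r-p-top-perm}), and then to recognize the resulting family as a set of Vesztergombi permutations counted by a poly-Bernoulli number of type C. The first step is to rephrase toppleability of a single configuration $\pi^{(r,p)}$ as bounds on the positions $\pi^{-1}_v$ of the values $v$. The structural feature I expect to exploit is that, for a fixed $r$, the insertion of the chip $r$ together with the bumping splits the chips into those with label $\geq r$ and those with label $< r$, and that the constraint on the position of a value $v$ is \emph{one-sided}: a lower bound $\pi^{-1}_v \geq p+v-n$ when $v \geq r$, and an upper bound $\pi^{-1}_v \leq p+v-1$ when $v < r$. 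The two extreme cases are the cleanest instances that reveal this mechanism: for $r=1$ the added chip is the global minimum and can only ever move left (it is never the larger of a toppled pair), forcing the one-sided band $\pi_j-j \leq n-p$; for $r=n+1$ the added chip is the global maximum and can only move right, forcing $\pi_j-j \geq 1-p$. By \cref{prop:symm} the $r=n+1$ analysis is the reflection of the $r=1$ one with $p$ replaced by $n-p$, so only one extreme needs to be treated directly.

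Granting the one-sided characterization for every $r$, the intersection is immediate. Fix a value $v \in [n]$. Choosing $r=1$ places $v$ in the range $v \geq r$, which forces $\pi^{-1}_v \geq p+v-n$; choosing $r=v+1$ (legitimate since $v+1 \leq n+1$) places $v$ in the range $v < r$, which forces $\pi^{-1}_v \leq p+v-1$. As this applies to each $v$, every $\pi \in \bigcap_{r=1}^{n+1}\mathcal{T}_n^{(r,p)}$ satisfies $p+v-n \leq \pi^{-1}_v \leq p+v-1$ for all $v$. Conversely, if these two-sided bounds hold for all $v$, then for any $r$ the required one-sided bound holds for each value, so $\pi$ lies in every $\mathcal{T}_n^{(r,p)}$. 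This establishes the claimed characterization.

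For the enumeration I will reinterpret the characterization as a bounded-displacement condition. Writing $j=\pi^{-1}_v$, the inequalities $p+v-n \leq j \leq p+v-1$ are equivalent to $1-p \leq \pi_j-j \leq n-p$ for all $j$; that is, the permutations in question are precisely the Vesztergombi permutations of $[n]$ whose displacements lie in the window $[\,1-p,\,n-p\,]$. By the combinatorial description of the poly-Bernoulli numbers of type C recalled in \cref{sec:poly-bernoulli}, the number of such permutations is $C_{p,n-p}$, which completes the proof.

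The main obstacle is the first step: establishing the one-sided position bounds that characterize a single $\mathcal{T}_n^{(r,p)}$, since the toppling dynamics, although deterministic by \cref{prop:determ}, is genuinely intricate, and a permutation can be $(r,p)$-toppleable for some $r$ yet fail for others (so one cannot simply read off the answer from a single convenient $r$). The delicate part is the sufficiency direction for the intermediate values of $r$: one must show that the two-sided band guarantees that the deterministic final configuration is sorted for \emph{every} $r$, not only for the two extremes. I would handle this by choosing an explicit left-to-right sweep of topplings and tracking, through the conservation of the firing numbers, that under the band condition each chip has exactly the room it needs to reach its sorted position without overshooting. Once the one-sided characterization is in hand, the intersection bookkeeping and the appeal to the Vesztergombi count are routine.
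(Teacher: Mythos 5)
Your overall skeleton---characterize each $\mathcal{T}_n^{(r,p)}$ by position bounds on values, intersect over $r$, then recognize the intersection as a bounded-displacement class counted by a type-C poly-Bernoulli number---is exactly the paper's route. But the key lemma you rest it on is false: the characterization of a single $\mathcal{T}_n^{(r,p)}$ is not one-sided. By \cref{thm:config_char} (equivalently \cref{prop: perm_char}), toppleability of $\pi^{(r,p)}$ imposes a \emph{two-sided} constraint on every value: $p+v-n-1 \leq \pi^{-1}_v \leq p+v-1$ when $v<r$, and $p+v-n \leq \pi^{-1}_v \leq p+v$ when $v \geq r$. Your one-sided version keeps only half of each and describes a strictly larger set. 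Concretely, take $n=3$, $p=1$, $r=1$, $\pi = 231$: every value satisfies your bound $\pi^{-1}_v \geq p+v-n$, yet toppling $\pi^{(1,1)}$ ends in $(1,3,2,\_,4)$, which is not sorted; the violated constraint is the upper bound $\pi^{-1}_1 \leq p+1=2$. Counting makes the failure vivid: for $n=4$, $p=1$, $r=1$ your one-sided band is satisfied by all $24$ permutations, whereas $|\mathcal{T}_4^{(1,1)}| = B_{3,1} = 8$ (see \cref{tab:n4 and 5rp}). Your heuristic that for $r=1$ the added chip ``can only move left'' is true but beside the point: the binding constraints for $r=1$ are on the \emph{other} chips, which must not sit too far to the right of where they need to end up.

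The damage is asymmetric. The forward direction survives, because your one-sided inequalities are consequences of the true two-sided ones, so your bookkeeping (use $r=1$ for the lower bound on $\pi^{-1}_v$, $r=v+1$ for the upper bound) correctly yields the band $p+i-n \leq \pi^{-1}_i \leq p+i-1$ for any $\pi$ in the intersection. But the converse collapses: satisfying the band does not, via your lemma, place $\pi$ in each $\mathcal{T}_n^{(r,p)}$, and your proposed repair---a sweep argument establishing the one-sided characterization---is aimed at proving a false statement, so it cannot be completed. The actual repair is to invoke the full two-sided characterization of \cref{thm:config_char}: with it, both directions of the intersection argument are immediate (each value $t$ occurs with label $t$ or $t+1$ in $\pi^{(r,p)}$ as $r$ varies, and intersecting the two shifted windows gives exactly the stated band), which is precisely the paper's proof. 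Note that the sufficiency half of \cref{thm:config_char} is where the real work lies (the pass analysis of \cref{lem:j_pass_char,lem:j_pass_char_2}); it is not something the intersection step can bypass. Your final enumeration step is fine up to bookkeeping: the band written for displacements of $\pi$ gives $C_{n-p+1,p-1}$ via \cref{thm:polyC-objects}(2), which equals the stated $C_{p,n-p}$ by the symmetry $C_{n+1,k}=C_{k+1,n}$ (or apply the count to $\pi^{-1}$ directly, as the paper does).
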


\cref{thm: top-pB,thm:r-p-top-perm,cor:aht-theorem,thm:top-perms-fixed-p}  will be proved in \cref{sec:toppleable}.
We next study the toppling process from another point of view. We have seen that not every permutation topples to the identity permutation, but it is possible to characterize those who topple to it. On the other hand, it is also clear that not all permutations will be the final configuration after a toppling process. For instance, in case of 4 chips, by adding a chip on the second position, only the $4$ permutations $1234$, $1243$, $2134$, $2143$ occur out of the $24$; see \cref{table:data-p-resultant}. 

The question naturally arises as to which permutations arise as the result of a toppling process. Moreover, we would like to give a characterization of permutations that end in a certain permutation. 

\begin{definition}
	We say that a configuration $C \in \mathcal{S}(n,p)$ \emph{topples to} the permutation  $\pi \in S_{n+1}$ if toppling $C$ results in $\pi$. For $\pi \in S_{n+1}$, if there exists a $C \in \mathcal{S}(n,p)$ which topples to $\pi$, we say that $\pi$ is a \emph{$p$-resultant permutation}. 
\end{definition}

The list of $2$-resultant permutations in $S_4$ and the number of configurations in $\mathcal{S}(3,2)$ that topple to them are given in \cref{table:data-p-resultant}.
\begin{table}[h]
\begin{tabular}{|c|c|c|}
	\hline
	\text{Resultant permutation} & \text{Configurations} & \text{Number} \\
	\hline
	1234 & 1(23)4, 1(24)3, 1(34)2, 2(13)4, 2(14)3, 3(12)4, 3(14)2 & 7\\
	1243 & 4(12)3, 4(13)2 & 2 \\
	2134 & 2(34)1, 3(24)1 & 2 \\
	2143 & 4(32)1 & 1 \\
	\hline
\end{tabular}
\smallskip
\caption{The $2$-resultant permutations in $S_4$ on the left, configurations toppling to them in the middle, and the number of configurations on the right.}
\label{table:data-p-resultant}
\end{table}

The characterization of $p$-resultant permutations is then as follows.

\begin{theorem}
\label{thm:p-resultant-perms}
A permutation $\pi \in S_n$ is a $p$-resultant permutation if and only if $(\pi_1, \dots,\allowbreak \pi_{n-p})  \in S_{n-p}$.
\end{theorem}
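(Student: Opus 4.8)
My plan is to prove both implications by locating the unique empty site of the final configuration and then tracking how the dynamics separate chips according to their values.

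\emph{Step 1 (locating the hole).} The quantity $\sum_c \operatorname{pos}(c)$, the sum of the occupied positions, is invariant under a toppling move, since replacing two chips at $i$ by one chip at $i-1$ and one at $i+1$ changes the sum by $(i-1)+(i+1)-2i=0$. For $C\in\mathcal S(n-1,p)$ the occupied positions are $\{1,\dots,n-1\}\cup\{p\}$, so this total is $\binom n2+p$. By \cref{prop:determ} the final configuration occupies all of $\{0,\dots,n\}$ except one site $j$, and hence has position sum $\binom{n+1}2-j$; equating the two forces $j=n-p$. Reading the final configuration from left to right and skipping the empty site $n-p$ produces $\pi\in S_n$, where $\pi_1,\dots,\pi_{n-p}$ are precisely the chips on sites $0,\dots,n-p-1$. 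Thus the condition $(\pi_1,\dots,\pi_{n-p})\in S_{n-p}$ is equivalent to saying that the $n-p$ sites to the left of the hole carry exactly the values $\{1,\dots,n-p\}$; I will call these the \emph{small} chips and the remaining $p$ the \emph{large} ones.

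\emph{Step 2 (necessity, the crux).} Set $t=n-p$ and consider $S=\sum_{c\le t}\operatorname{pos}(c)$, the total position of the small chips. Because the smaller of the two toppled chips always moves left, $S$ is non-increasing: it is unchanged when the two toppled chips are both small or both large, and it drops by exactly $1$ when one small and one large chip topple together. Since the small chips sit on $t$ distinct nonnegative sites, none of them at the hole $n-p=t$, we have $S\ge\binom t2$, with equality iff they occupy $\{0,\dots,t-1\}$, i.e. exactly the sites left of the hole. So necessity is equivalent to $S$ attaining its minimum $\binom t2$, equivalently to the pairwise statement that at termination every small chip precedes every large chip. This is the main obstacle: monovariance only gives $S^{\mathrm{final}}\le S^{\mathrm{initial}}$, and one checks that there is no separable conserved quantity distinguishing the small/large split, so the packing of small chips to the left is a genuinely dynamical fact, not an invariant one. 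I would prove it by exploiting the single-avalanche structure — $C$ is a stable configuration of $n-1$ chips with one extra chip dropped at site $p$ — and arguing that as the two outward-moving toppling fronts propagate, no small chip is ever left stranded to the right of site $n-p$; establishing this separation cleanly is where the real work lies.

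\emph{Step 3 (sufficiency).} Conversely, given any $\pi$ with $\{\pi_1,\dots,\pi_{n-p}\}=\{1,\dots,n-p\}$, I would realize the block configuration $F$ (small chips $\pi_1,\dots,\pi_{n-p}$ on sites $0,\dots,n-p-1$, hole at $n-p$, large chips $\pi_{n-p+1},\dots,\pi_n$ on sites $n-p+1,\dots,n$) as the output of some $C\in\mathcal S(n-1,p)$ by running the toppling in reverse. A reverse move takes a chip from $i-1$ and a smaller-labelled partner constraint aside, merges the chips from $i-1$ and $i+1$ at $i$ provided the former is smaller than the latter; this inverts a forward topple. Starting from $F$ I would first merge the two chips flanking the hole — a small chip immediately left of a large one, so the order constraint holds automatically — and then continue reverse-toppling so as to march the two end sites $0$ and $n$ inward until every chip lies on $\{1,\dots,n-1\}$ with a single doubled site. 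A short check that the reverse moves can be ordered to leave that doubled site at $p$ yields $C\in\mathcal S(n-1,p)$ toppling to $F$, so $\pi$ is $p$-resultant.

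In summary, Steps 1 and 3 are essentially bookkeeping (with Step 3 requiring care to verify the reverse moves respect the order constraint and terminate in $\mathcal S(n-1,p)$), while the heart of the theorem is Step 2: the claim that the toppling forces the $n-p$ smallest chips entirely to the left of the hole. I expect this to need the avalanche/front analysis sketched above rather than a one-line invariant, precisely because the available monovariants bound the final configuration only in terms of the initial one and do not on their own drive $S$ down to its minimum.
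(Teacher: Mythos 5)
Your Step 1 is correct and tidy: the position-sum invariant pins the empty site of the final configuration at $n-p$ (recovering what the paper reads off from the pass structure, \cref{rem:passes}(3)), and it correctly recasts the theorem as the statement that the chips $1,\dots,n-p$ end up exactly on the sites left of the hole. But the proposal has a genuine gap precisely where you say "the real work lies": Step 2 is never carried out. The monovariant $S$ only gives $S^{\mathrm{final}} \le S^{\mathrm{initial}}$; since $S$ drops by exactly one per mixed (small--large) toppling, forcing $S^{\mathrm{final}} = \binom{t}{2}$ is equivalent to counting the mixed topplings exactly, which is the theorem itself, so the monovariant buys nothing beyond the reformulation. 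The "avalanche/front" argument you gesture at is not developed, and it is genuinely delicate: small chips can move to the right of site $n-p$ during the process and return only later (the paper's proof of \cref{thm:p-resultant-perms-r} analyzes exactly such excursions of the chip $r$ out of and back into the left part), so there is no monotone front and a bare "no small chip is ever stranded" claim needs an inductive mechanism you have not supplied. Step 3 is likewise only a sketch: "a short check that the reverse moves can be ordered" to leave the doubled site at $p$ is not obvious, and reverse toppling is nondeterministic, so some actual scheduling argument is required. As written, neither direction of the equivalence is proved.

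For comparison, the paper closes the hard direction with a short counting argument on top of the pass decomposition. After reducing to $p \le \floor{n/2}$ by \cref{prop:symm}, let $S_i$ be the set of chips weakly left of site $p$ after the $i$'th pass, and $T_i$ the analogous right set. One chip freezes on each side per pass, $|S_i| = p+1-i$ and $|T_i| = n-p+1-i$, and there are only $p$ large chips in total; a cardinality count then shows each $\min S_i$ (the next frozen left entry) is small and each $\max T_i$ (the next frozen right entry) is large. Running over all $p$ passes puts all $p$ large chips in the last $p$ positions, which is the necessity claim. Sufficiency is not argued by reverse toppling at all, but by the explicit construction in the proof of \cref{thm:p-resultant-configs}: insert the non-record chips into a toppleable skeleton configuration built from the left-to-right maxima of $\pi^{\text{left}}$ and right-to-left minima of $\pi^{\text{right}}$. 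If you want to salvage your outline, Step 2 should be replaced (or completed) by an argument of this pass-by-pass type; the invariant of Step 1 can stay, but it cannot do the dynamical work on its own.
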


A permutation $\pi$ in $S_n$ is \emph{irreducible}~\cite{klazar-2003} or \emph{indecomposable}~\cite{king-2006}
if there does not exist an $m$, $1 \leq m < n$ such that $\pi([m]) = [m]$. A permutation which is not irreducible is called \emph{reducible} or \emph{decomposable}.
\cref{thm:p-resultant-perms} says that $p$-resultant permutations are certain reducible or decomposable permutations.

To count the number of configurations toppling to a given $p$-resultant permutation, we recall that a \emph{left-to-right maximum} or a \emph{record} of a permutation $\pi$ is a value $\pi_j$ such that $\pi_j = \max\{\pi_1,\dots,\pi_j\}$. By convention, $\pi_1$ is taken to be a left-to-right maximum. It is a standard fact that the number of permutations in $S_n$ with $k$ left-to-right maxima are given by $\st{n}{k}$, 
the (unsigned) \emph{Stirling number of the first kind}; see \cite[Problem 6.63]{knuth-graham-patashnik-1994} for example. Similarly, one can define a \emph{right-to-left minimum} for a permutation. If $\pi_i = j$ is a left-to-right maximum for $\pi$, then one can show that $\pi^{-1}_j = i$ is a right-to-left minimum for $\pi^{-1}$. Therefore, the number of permutations of $S_n$ with $k$ right-to-left minima is also $\st nk$. 

For a given $p$-resultant permutation $\pi$, let $\pi^{\text{left}}$ be the induced permutation on $[n-p]$ and $\pi^{\text{right}}$ be the induced permutation on $\{n-p+1, \dots, n\}$.
The point of these definitions is the following lemma.

\begin{lemma}
\label{lem: topp_lrm}
The number of configurations toppling to a given $p$-resultant permutation $\pi \in S_n$ depends solely on the number of left-to-right maxima of $\pi^{\text{left}}$ and the right-to-left minima of $\pi^{\text{right}}$.
\end{lemma}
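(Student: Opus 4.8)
The plan is to pin down where the two value-blocks live, reduce to one side using the symmetry of \cref{prop:symm}, and then prove the lemma by showing that one may delete a ``passive'' chip without changing the number of preimages. First I would record the conservation law behind \cref{prop:symm} and \cref{thm:p-resultant-perms}: a toppling at site $i$ sends one chip to $i-1$ and one to $i+1$, so $\sum_{\text{chips}}(\text{position})$ is invariant. For $C\in\mathcal{S}(n-1,p)$ this forces the unique empty site of the final configuration to be $e=n-p$, independent of $C$; hence $\pi^{\text{left}}$ sits on sites $0,\dots,e-1$ carrying the values $[n-p]$, and $\pi^{\text{right}}$ sits on sites $e+1,\dots,n$ carrying $\{n-p+1,\dots,n\}$. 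Applying \cref{prop:symm} (reverse the segment and replace each label $x$ by $n+1-x$) bijects the preimages of $\pi$ in $\mathcal{S}(n-1,p)$ with those of its reverse--complement in $\mathcal{S}(n-1,n-1-p)$, interchanging the two blocks and sending left-to-right maxima of $\pi^{\text{left}}$ to right-to-left minima of the image's right block. So it is enough to treat one block: I would prove that deleting a non-record of $\pi^{\text{left}}$ preserves the preimage count, and obtain the corresponding statement for right-to-left minima of $\pi^{\text{right}}$ by symmetry.

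The core is a deletion bijection. Let $v=\pi_j$ lie at a non-record position $j\le n-p$ of $\pi^{\text{left}}$, so $v$ is strictly dominated by the left-to-right maximum preceding it. I would define, at the level of configurations, a map erasing the chip $v$ from each preimage $C$ of $\pi$ (and relabelling the surviving chips $>v$ down by one) to produce a $C'\in\mathcal{S}(n-2,p)$, and show $C'$ is a preimage of the permutation $\pi'\in S_{n-1}$ obtained from $\pi$ by the same deletion. A short check shows $\pi'$ is again a $p$-resultant permutation whose left block has the same number of left-to-right maxima as $\pi^{\text{left}}$ and whose right block has unchanged right-to-left minima, because removing a non-record (respectively a non--right-to-left minimum) alters no other record. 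Iterating the deletion over all non-records of $\pi^{\text{left}}$, and, via the symmetry above, over all non--right-to-left minima of $\pi^{\text{right}}$, transports $\pi$ to a canonical permutation determined solely by the number of left-to-right maxima of $\pi^{\text{left}}$ and the number of right-to-left minima of $\pi^{\text{right}}$; since every step preserves the preimage count, the lemma follows.

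The step I expect to be the main obstacle is proving that this deletion is well defined and bijective, that is, that it commutes with the toppling dynamics. A naive relabelling of chip values does \emph{not} commute with toppling --- already on small configurations one sees that swapping two consecutive labels changes the outcome of every topple in which exactly one of them meets a pile --- so the deletion must be justified through the dynamics, including the bookkeeping needed to repack the remaining chips into $\mathcal{S}(n-2,p)$. The claim to isolate and prove is a passivity principle for non-records: a chip at a non-record position stays dominated throughout the process by the record preceding it and is carried to its final site as the smaller chip in each topple that separates it, so that deleting it changes neither the sequence of toppling decisions among the surviving chips nor their final sites. Establishing this --- presumably by induction on the toppling steps, with each record acting as a pivot that escorts the non-records it dominates --- and verifying that the inverse insertion is uniquely realizable is the technical heart; granting it, well-definedness, injectivity and surjectivity of the deletion follow, and with them the lemma.
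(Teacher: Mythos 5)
Your high-level intuition---that the non-records of $\pi^{\text{left}}$ are ``passive'' and that the preimage count is therefore insensitive to everything but the records---is consistent with what is true, and your use of \cref{prop:symm} to reduce to the left block matches the paper. But the proposal is not a proof: the entire burden rests on the ``passivity principle'' that you yourself flag as the technical heart and leave unestablished, and the heuristic you offer for it misdescribes the dynamics. You picture a non-record of $\pi^{\text{left}}$ sitting beside, and being escorted by, the record preceding it, so that deleting it leaves the topplings of the surviving chips untouched. In fact, in any preimage of $\pi$, the non-records of $\pi^{\text{left}}$ must begin \emph{strictly to the right of site $p$}, at determined sites $p+1,p+2,\dots$ in the same relative order as in $\pi^{\text{left}}$, and they cross into the left part one per pass, pushed along by whatever larger chips they happen to meet---not by their preceding record. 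For instance, $C=2(34)1 \in \mathcal{S}(3,2)$ topples to $2134$ (see the paper's table of $2$-resultant permutations); the non-record $1$ starts at site $3=p+1$ and never shares a site with the record $2$. This forced-initial-position statement is exactly what your deletion map needs: without it you cannot show that erasing a non-record and repacking commutes with the toppling process, and you certainly cannot show that the inverse insertion is \emph{unique}, since uniqueness of the insertion site is the same assertion as knowing the non-record's position in every preimage.

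The paper proves precisely this forced-position claim, and then the lemma is immediate with no deletion bijection at all. Writing $S_i$ for the set of chips on sites $\leq p$ after the $i$'th pass, one has $\pi_{i+1}=\min S_i$; if $\pi_{i+1}$ is smaller than an earlier entry, it cannot lie in $S_{i-1}$ (whose minimum is an earlier, smaller-indexed entry), so $\{\pi_{i+1}\}=S_i\setminus S_{i-1}$, which forces $\pi_{i+1}$ to sit at site $p+1$ just before the $i$'th pass and hence at a determined site of the initial configuration. Once the initial positions and relative order of all non-records (and, by \cref{prop:symm}, of all non-right-to-left-minima of $\pi^{\text{right}}$) are determined by $\pi$ alone, the number of preimages visibly depends only on how the records may be placed, i.e., only on their number; the relabelling bijection you envisage is then easy and is in fact what the paper sets up afterwards to prove \cref{thm:p-resultant-configs}. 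So the gap is concrete: supply a proof of the forced-position claim (your passivity principle in corrected form---``non-records start right of $p$ at forced sites,'' not ``non-records are escorted by their preceding record''), and your outline closes; as written, the key step is missing and the induction you sketch toward it starts from a false picture of the trajectories.
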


For general $n$ and $p$, we form an array $T^{(p)}_n$ with $n-p$ rows and $p$ columns. The $i$'th row of $T^{(p)}_n$ is indexed by permutations of $[n-p]$ with $i$ left-to-right maxima. The $j$'th column of $T^{(p)}_n$ is indexed by permutations of $\{n+1-p, \dots, n\}$ with $j$ right-to-left minima. Let the $(i,j)$'th entry of $T^{(p)}_n$ be the number of configurations that topple to any permutation $\pi$ with $i$ left-to-right maxima in $\pi^{\text{left}}$ and $j$ right-to-left minima in $\pi^{\text{right}}$.
\cref{lem: topp_lrm} guarantees that this is well-defined. For example, see \cref{table:3}.

\begin{table}[h]
	\begin{center}
	\begin{tabular}{|c||c|c|}
		\hline
		$\pi^{\text{left}} \backslash \pi^{\text{right}}$ & 65 & 56 \\
		\hline \hline
		4123, 4132, 4213, 4231, 4312, 4321 & 1 & 2 \\
		\hline
		1423, 1432, 2143, 2413, 2431 & $\multirow{2}{*}{2}$ & $\multirow{2}{*}{7}$ \\
		3124, 3142, 3214, 3241, 3412, 3421 & &\\
		\hline
		1243, 1324, 1342, 2134, 2314, 2341 & 4 & 23 \\
		\hline
		1234 & 8 & 73 \\
		\hline
	\end{tabular}
\end{center}
	\smallskip
	\caption{The array $T^{(2)}_6$ as defined after \cref{lem: topp_lrm}.}
	\label{table:3}
\end{table}

Our next theorem gives a statement about the size of the set of configurations that topple to a permutation in a class, i.e. with a given number of left-to-right maxima in $\pi^{\text{left}}$ and given number right-to-left minima of $\pi^{\text{right}}$. 

\begin{theorem}
\label{thm:p-resultant-configs}
The number of configurations that topple to a resultant permutation $\pi$ with $i$ left-to-right maxima in $\pi^{\text{left}}$ and $j$ right-to-left minima of $\pi^{\text{right}}$ is $\frac 12 B_{i,j}$.
\end{theorem}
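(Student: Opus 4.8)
The plan is to bootstrap from \cref{thm: top-pB}, which is exactly the special case of the present theorem in which \emph{every} position is a record. A configuration is $p$-toppleable precisely when it topples to the identity, and for $\pi=\mathrm{id}\in S_n$ we have $\pi^{\text{left}}=\mathrm{id}$ on $[n-p]$ (all $n-p$ positions are left-to-right maxima) and $\pi^{\text{right}}=\mathrm{id}$ (all $p$ positions are right-to-left minima); thus $i=n-p$ and $j=p$, and \cref{thm: top-pB} applied to $\mathcal{S}(n-1,p)$ gives exactly $B_{n-p,p}/2=\tfrac12 B_{i,j}$. The whole theorem will therefore follow if I can reduce an arbitrary $p$-resultant $\pi$ to the identity while preserving both the pair $(i,j)$ and the number of configurations toppling to it.

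I would carry out this reduction by induction on the number of non-record positions, $(n-p-i)+(p-j)$. The base case, where this number is zero, forces $\pi=\mathrm{id}$ and is settled by \cref{thm: top-pB} as above. For the inductive step, suppose $\pi^{\text{left}}$ has a position that is not a left-to-right maximum; the symmetric situation on the right is reduced to this one by \cref{prop:symm}. Invoking \cref{lem: topp_lrm}, I may first replace $\pi$ by a convenient representative of its $(i,j)$-class, chosen so that the chosen non-record value sits immediately to the right of a larger record. Deleting this value and its site and standardizing yields a $p$-resultant permutation $\pi'\in S_{n-1}$ with the same pair $(i,j)$ (deleting a non-record alters none of the running maxima at later positions, so the record set is preserved). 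The crux is then a bijection between configurations toppling to $\pi$ and those toppling to $\pi'$: because a non-record chip is dynamically passive, its terminal site is forced by the neighbouring records, so it can be excised from any preimage of $\pi$ and, conversely, reinserted in a unique compatible way to recover a preimage. Iterating removes all non-records and lands in the base case.

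The main obstacle is to make precise, and to verify, that this excision/insertion genuinely commutes with the toppling dynamics---that a non-record chip never disturbs the final resting places of the record chips and is itself placed deterministically. I expect to read this off from the structure behind \cref{thm:p-resultant-perms}: the decomposition $\pi=\pi^{\text{left}}\oplus\pi^{\text{right}}$ says that the labels of the left block $\{1,\dots,n-p\}$ and those of the right block never intermingle in the final configuration, so the dynamical analysis localizes to a single block, and within a block it is exactly the record/non-record dichotomy that governs which chips may be toppled past one another. Establishing the forced placement of a passive chip, and checking that insertion is well-defined and two-sided inverse to excision, is the one genuinely dynamical point; everything else is bookkeeping.

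As a consistency check I would confirm the aggregate count. Summing the claimed value over all $p$-resultant permutations must reproduce $|\mathcal{S}(n-1,p)|=n!/2$, and since there are $\st{n-p}{i}$ permutations of $[n-p]$ with $i$ left-to-right maxima and $\st{p}{j}$ permutations of the top block with $j$ right-to-left minima, this is the poly-Bernoulli sum rule $\sum_{i,j}\st{n-p}{i}\st{p}{j}B_{i,j}=n!$. This corroborates the formula globally but, being only a sum over classes, cannot by itself isolate the per-class value $\tfrac12 B_{i,j}$, which is precisely what the bijective reduction supplies.
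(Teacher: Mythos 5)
Your proposal is correct and follows essentially the same route as the paper: the paper's proof likewise excises the non-record chips (all at once, via an explicit bijection $\phi$ onto the $j$-toppleable configurations in $\mathcal{S}(i+j-1,j)$, whose validity rests on the forced initial placements of non-record chips established in the proof of \cref{lem: topp_lrm}) and then invokes \cref{thm: top-pB} to obtain $\tfrac12 B_{i,j}$. Your one-chip-at-a-time induction is just an iterated version of that single excision/insertion bijection, and the dynamical crux you flag is exactly the point the paper also discharges by appealing to the proof of \cref{lem: topp_lrm}.
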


To have the complete picture about resultant permutations, we now focus on the distinguished chip $r$ in an initial configuration. More precisely, we consider configurations of the form $\sigma^{r,p}$ as $\sigma$ varies. According to \cref{thm:p-resultant-perms} and \cref{lem: topp_lrm}, the resultant permutation $\pi$ splits up into $\pi^{\text{left}}$ of $[n-p]$ and $\pi^{\text{right}}$ of $\{n-p+1,\dots,n\}$. However, not all permutations of $[n-p]$ and $\{n-p+1,\dots,n\}$ will necessarily appear.
 
\begin{theorem}
	\label{thm:p-resultant-perms-r}
	A $p$-resultant permutation $\pi \in S_n$ is obtained by toppling a permutation by adding the chip $r$ if and only if $\pi$ satisfies the following conditions:
	\begin{enumerate}
			\item If $r \leq n-p$, then $r$ is a left-to-right maximum of $\pi^{\text{left}}$,
		
		\item If $r > n-p$, then $r$ is a right-to-left minimum of $\pi^{\text{right}}$.
		
	\end{enumerate}
\end{theorem}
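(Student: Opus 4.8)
The plan is to reduce the statement to a question about a single configuration and then to analyse the trajectory of the added chip. First I would recast the claim: since every configuration in $\mathcal{S}(n-1,p)$ arises from exactly two pairs (permutation, added value) — one for each of the two chips sitting at site $p$ — the permutation $\pi$ is obtained by adding the chip $r$ if and only if there is a configuration $C$ toppling to $\pi$ that carries the chip labelled $r$ at site $p$. So it suffices to determine, as $C$ ranges over all configurations toppling to $\pi$, exactly which chip values can occupy site $p$. I would also record the elementary conservation law already used: the quantity $\sum_c \mathrm{pos}(c)$ is preserved by every toppling step, so the unique empty site of the final configuration is always $n-p$; hence the left block (sites $0,\dots,n-p-1$) carries the values $\{1,\dots,n-p\}$ arranged as $\pi^{\text{left}}$ and the right block (sites $n-p+1,\dots,n$) carries $\{n-p+1,\dots,n\}$ arranged as $\pi^{\text{right}}$. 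In particular, a value $r\le n-p$ is a left-to-right maximum of $\pi^{\text{left}}$ precisely when no chip of larger value finishes to the left of chip $r$; symmetrically, a value $r>n-p$ is a right-to-left minimum of $\pi^{\text{right}}$ precisely when no chip of smaller value finishes to its right.

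For the forward (``only if'') direction I would prove a trajectory lemma: the added chip $r$ always finishes as a left-to-right maximum of $\pi$ when $r\le n-p$, and as a right-to-left minimum when $r>n-p$. Because the final configuration is independent of the toppling order (\cref{prop:determ}), I am free to fix a convenient order and track the relative order of the added chip and an arbitrary chip of strictly larger value; the content of the lemma is that the added chip is never overtaken from the left by a larger chip, so that in the end no larger chip lies to its left. The case $r>n-p$ I would not treat directly but deduce from the case $r\le n-p$ via an appropriate reflection as in \cref{prop:symm}, under which direction is reversed and values are complemented, so that the left block, its left-to-right maxima, and ``added chip $r\le n-p$'' are carried to the right block, its right-to-left minima, and ``added chip $r>n-p$''.

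For the backward (``if'') direction I would show that every admissible value is actually realised at site $p$. The forward direction already confines the site-$p$ values of the (by \cref{thm:p-resultant-configs}) $\tfrac12 B_{i,j}$ configurations toppling to $\pi$ to the $i$ left-to-right maxima of $\pi^{\text{left}}$ together with the $j$ right-to-left minima of $\pi^{\text{right}}$; to see that none is missing I would, for each such maximum (respectively minimum) value $v$, construct an explicit pre-image — a configuration toppling to $\pi$ with chip $v$ at site $p$ — by reversing the trajectory analysis, placing $v$ at site $p$ and completing the remaining chips compatibly with $\pi^{\text{left}}$ and $\pi^{\text{right}}$. Combining the two directions and translating back through the reformulation of the first paragraph yields the stated characterisation. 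The main obstacle is the trajectory lemma of the second paragraph: controlling when two chips cross during toppling is delicate, and the crux is to isolate what is special about the added chip — namely that, being the second chip deposited at site $p$, it behaves like a record and cannot be passed on the relevant side by a larger chip.
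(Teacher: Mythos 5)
Your reduction to ``which labels can occupy site $p$ in a configuration toppling to $\pi$'', the conservation-law location of the empty site, and the use of \cref{prop:symm} to dispose of case (2) all match the paper. But the proof has a genuine gap exactly where you flag it: the ``trajectory lemma'' is not a technical preliminary to be filled in later, it \emph{is} the theorem, and you assert it rather than prove it. The paper's entire argument consists of establishing this statement, and it does so not by tracking pairwise crossings under a cleverly chosen toppling order, but via the pass structure of \cref{rem:passes}: writing $S_j$ for the set of chips weakly to the left of site $p$ after $j$ passes, one lets $j^*$ be the first pass in which $r$ leaves the left part and $i^*$ the first pass in which it returns, and observes that every chip entering the left part while $r$ is outside must at some moment share a site with $r$, hence must be smaller than $r$ (otherwise $r$ would be the smaller chip and would topple left, back into the left part). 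Consequently every minimum frozen before $r$ returns is smaller than $r$, and iterating over the successive excursions $j^*, i^*, j^{**}, i^{**}, \dots$ shows every entry frozen before $r$ itself is smaller than $r$. No argument of this kind (or any other) appears in your proposal.

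Moreover, the invariant you do name is not the right one. ``The added chip is never overtaken from the left by a larger chip'' cannot yield ``in the end no larger chip lies to its left'', because a larger chip of value $\le n-p$ may \emph{start} at a site $< p$, and then the added chip must itself cross it. For instance, $C = 3(14)2 \in \mathcal{S}(3,2)$ topples to $1234$; taking the added chip to be $r=1$, the chip $3$ begins to the left of the added chip and finishes to its right, so crossings between $r$ and larger chips are not merely possible but forced. A symmetric no-crossing statement is therefore false, and what must be proved is the asymmetric claim about which chips can push $r$ leftward -- precisely what the $S_j$ analysis above delivers. Finally, your backward direction (``construct a pre-image by reversing the trajectory analysis'') is also only a gesture; existence follows cleanly from the bijection $\phi$ in the proof of \cref{thm:p-resultant-configs} together with \cref{thm:config_char}, since for every $k \le i$ there is a toppleable configuration in $\mathcal{S}(i+j-1,j)$ with the chip labeled $k$ at site $j$, and its image under $\phi^{-1}$ is a configuration toppling to $\pi$ with the corresponding left-to-right maximum at site $p$.
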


\cref{thm:p-resultant-perms,lem: topp_lrm,thm:p-resultant-configs,thm:p-resultant-perms-r} will be proved in \cref{sec:finalconfigs}.
The last theorem determines the number of permutations $N_\pi \equiv N_\pi(r,p)$ toppling to a given permutation for a fixed value of $r$ and $p$. 
Before stating the next result for $p$-resultant permutations in $S_n$ obtained by adding the chip $r$ to site $p$, we give some data for $n=6$ and $p = r = 2$ in \cref{table:4}.
\begin{table}[h]
	\begin{center}
	\begin{tabular}{|c||c|c|}
		\hline
		$\pi^{\text{left}} \backslash \pi^{\text{right}}$ & 65 & 56 \\
		\hline \hline
		2143, 2413, 2431 & 2 & 4 \\
		\hline
		2134, 2314, 2341 & 4 & 14 \\
		\hline
		1243 & 2 & 10 \\
		\hline
		1234 & 4 & 32 \\
		\hline
	\end{tabular}
\end{center}
	\smallskip
	\caption{The $2$-resultant permutations of $S_4$ obtained by adding the chip $r=2$ along with the number of permutations toppling to them written as an array in the style of \cref{table:3}.}
	\label{table:4}
\end{table}
Using \cref{prop:symm}, it suffices to look at $r \leq n-p$.

\begin{theorem}
\label{thm:num-p-resultant-perms}
	Suppose $r \leq n - p$ and $\pi$ is a $p$-resultant permutation that satisfies the conditions of \cref{thm:p-resultant-perms-r}. 
	If the left-to-right maxima in $(\pi_1, \dots, \pi_{n-p})$ are $\{i_1,\dots,i_a,r,j_1,\allowbreak \dots,j_b\}$ in increasing order and there are $k$ right-to-left minima in $(\pi_{n-p+1}, \dots, \pi_{n})$, $N_\pi = \Delta^a \big( B_{b,k} \big)$, where $\Delta$ acts on the first index. 
\end{theorem}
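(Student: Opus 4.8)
The plan is to reduce the enumeration of permutations to an enumeration of configurations, then to prove a refinement of \cref{lem: topp_lrm} tracking the chip at site $p$, and finally to reduce to the identity resultant permutation, where \cref{thm:r-p-top-perm} applies. Throughout, a resultant $\pi \in S_n$ is toppled from configurations in $\mathcal{S}(n-1,p)$, which arise from permutations $\sigma \in S_{n-1}$.

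First I would pass from permutations to configurations. For fixed $r$ and $p$ the map $\sigma \mapsto \sigma^{(r,p)}$ is injective, and its image is exactly the set of configurations $C \in \mathcal{S}(n-1,p)$ having the chip $r$ at site $p$: given such a $C$, one recovers the unique $\sigma$ by deleting $r$ from site $p$ and decreasing every chip value exceeding $r$ by one. Hence
\[
N_\pi(r,p) = \#\{\, C \in \mathcal{S}(n-1,p) : C \text{ topples to } \pi \text{ and } r \text{ occupies site } p \,\}.
\]
By \cref{thm:p-resultant-perms-r}, whenever $C$ topples to $\pi$ each of its two site-$p$ chips is admissible, i.e. a left-to-right maximum of $\pi^{\text{left}}$ when $\le n-p$, or a right-to-left minimum of $\pi^{\text{right}}$ when $> n-p$. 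In particular, writing the left-to-right maxima of $\pi^{\text{left}}$ as $m_1 < \cdots < m_i$, the choice $r = m_t$ is the one for which $a = t-1$ and $b = i-t$.

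Second, I would establish a refinement of \cref{lem: topp_lrm}: the count $N_\pi(r,p)$ depends only on the triple $(a,b,k)$. \cref{lem: topp_lrm} already gives dependence only on $(i,j)=(a+b+1,k)$, so the new content is that the finer split according to the rank of the site-$p$ chip among the admissible values is itself an invariant. Concretely, I would upgrade the bijections between toppling preimages used to prove \cref{lem: topp_lrm} so that they preserve the chip deposited at site $p$, or at least its rank among the admissible values; this forces them to preserve $a$ and $b$ separately rather than only the sum $i$. Granting this, $N_\pi(r,p)$ equals the corresponding count for any convenient representative with the same $(a,b,k)$.

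Third, I would take as representative the identity permutation $\mathrm{id} \in S_M$ with $M = a+b+1+k$, $p = k$, and $r = a+1$. Here $\pi^{\text{left}}$ and $\pi^{\text{right}}$ are increasing, so all $a+b+1$ elements of $\pi^{\text{left}}$ are left-to-right maxima (with exactly $a$ below $r=a+1$ and $b$ above) and all $k$ elements of $\pi^{\text{right}}$ are right-to-left minima. Since $\mathrm{id}$ is the sorted permutation, the permutations counted by $N_{\mathrm{id}}(r,p)$ are precisely the $(r,p)$-toppleable ones, so $N_{\mathrm{id}}(r,p) = |\mathcal{T}_{M-1}^{(r,p)}|$; \cref{thm:r-p-top-perm}, applied with $n = a+b+k$, $p = k$, $r = a+1$, evaluates this to $\Delta^{a}\big(B_{b,k}\big)$. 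Combining the three steps gives $N_\pi = \Delta^{a}(B_{b,k})$ for every resultant $\pi$ with parameters $(a,b,k)$.

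The main obstacle is the refined invariance of the second step: whereas \cref{lem: topp_lrm} certifies only dependence on $(i,j)$, one must show that the chip sitting at site $p$ can be carried coherently through the structure-preserving maps between toppling preimages, so that its rank among the admissible values is conserved. An alternative that sidesteps \cref{thm:r-p-top-perm} entirely would be to characterize directly, by inequalities on $\sigma^{-1}$ in the spirit of \cref{thm:top-perms-fixed-p}, exactly which $\sigma$ topple to $\pi$ for fixed $r$ and $p$, and then to count the solutions by inclusion--exclusion over the $a$ left-to-right maxima below $r$. It is precisely this inclusion--exclusion that produces the operator $\Delta^{a}$, and verifying that it reproduces the identity-case computation would yield a self-contained derivation of the formula.
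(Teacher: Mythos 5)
Your proposal is correct and takes essentially the same route as the paper, whose entire proof is the citation of your two ingredients: \cref{thm:r-p-top-perm} together with the bijection $\phi$ constructed in the proof of \cref{thm:p-resultant-configs}. The refined invariance you flag as the main obstacle is exactly what $\phi$ already supplies: it deletes only chips that are neither left-to-right maxima of $\pi^{\text{left}}$ nor right-to-left minima of $\pi^{\text{right}}$, so the marked chip $r$ (a left-to-right maximum by \cref{thm:p-resultant-perms-r}) survives, is relabeled $a+1$, and remains at the doubly occupied site, which becomes site $k$ in the reduced system; hence your marked count is $|\mathcal{T}_{a+b+k}^{(a+1,k)}| = \Delta^{a}\big(B_{b,k}\big)$, matching your identity-representative computation.
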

	
\begin{proof}
This follows from \cref{thm:r-p-top-perm} and the bijection in the proof of \cref{thm:p-resultant-configs}.
\end{proof}

As a corollary of \cref{thm:num-p-resultant-perms}, we mention the special case where poly-Bernoulli numbers of type C occur. 
The permutations $\pi$ which occur and the corresponding $N_\pi$'s for $n=6$, $p = 3$ for both $r=3,4$ are listed in \cref{table:5}.
\begin{table}[h]
	\begin{center}
	\begin{tabular}{|c||c|c|c|}
		\hline
		$\pi^{\text{left}} \backslash \pi^{\text{right}}$ & 564, 654 & 465, 546, 645 & 456 \\
		\hline \hline
		312, 321 & 1 & 1 & 1 \\
		\hline
		132, 213, 231 & 1 & 3 & 7  \\
		\hline
		123 & 1 & 7 & 31\\
		\hline
	\end{tabular}
\end{center}
	\smallskip
	\caption{
		The $3$-resultant permutations of $S_6$ obtained by adding either the chip $r=3$ or $r=4$ along with the number of permutations toppling to them written as an array in the style of \cref{table:3}.}
	\label{table:5}
\end{table}

\begin{corollary}
	Suppose $r = n - p$ and $\pi$ is a $p$-resultant permutation that satisfies the conditions of \cref{thm:p-resultant-perms-r}. Assume that there are $i$ left-to-right maxima 
	in $(\pi_1, \dots, \pi_{n-p})$ and $j$ right-to-left minima in 
	$(\pi_{n-p+1}, \dots, \pi_{n})$. Then 
	$N_\pi = C_{j-1,i-1}$.
\end{corollary}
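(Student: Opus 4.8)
The plan is to derive this corollary directly from Theorem~\ref{thm:num-p-resultant-perms}, which already gives $N_\pi = \Delta^a\big(B_{b,k}\big)$ in terms of the left-to-right maxima structure. The entire task reduces to identifying the parameters $a$, $b$, and $k$ in the special case $r = n-p$, and then showing that the resulting finite-difference expression collapses into a single poly-Bernoulli number of type C. The key observation is that when $r = n-p$, the added chip takes the \emph{largest possible value} among entries of $\pi^{\text{left}}$, since $\pi^{\text{left}} \in S_{n-p}$ and $r = n-p$ is its maximum. A permutation's maximum value is automatically a left-to-right maximum, and crucially there can be \emph{no} left-to-right maxima exceeding it; hence in the notation $\{i_1,\dots,i_a, r, j_1,\dots,j_b\}$ of Theorem~\ref{thm:num-p-resultant-perms}, the set $\{j_1,\dots,j_b\}$ of records larger than $r$ is empty, so $b = 0$.

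First I would make the parameter bookkeeping precise. With $b = 0$, the total number of left-to-right maxima in $(\pi_1,\dots,\pi_{n-p})$ equals $a + 1$ (the records $i_1,\dots,i_a$ together with $r$ itself), so if we set this total to $i$ as in the corollary, then $a = i - 1$. Likewise the number of right-to-left minima in $(\pi_{n-p+1},\dots,\pi_n)$ is $j$, so $k = j$. Substituting into Theorem~\ref{thm:num-p-resultant-perms} yields
\[
N_\pi = \Delta^{i-1}\big( B_{0,j} \big),
\]
where $\Delta$ acts on the first index of the poly-Bernoulli number of type B.

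Next I would evaluate this difference operator. From the generating function \eqref{bnk-gf} one reads off that $B_{0,k} = 1$ for all $k$, so naively $\Delta^{i-1}(B_{0,j})$ with $\Delta$ fixing $j$ and varying the \emph{first} index requires the values $B_{0,j}, B_{1,j}, \dots, B_{i-1,j}$. The cleanest route is to invoke a known identity expressing iterated forward differences of poly-Bernoulli numbers of type B as poly-Bernoulli numbers of type C; concretely, I expect the relation
\[
\Delta^{m}\big( B_{0,k} \big) = C_{m,k} \quad\text{(up to a sign or index shift)}
\]
which is consistent with the tables in \cref{tab:data-polybern}: comparing the difference of the type-B table against the type-C table. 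Applying this with $m = i-1$ and $k = j$, and then using the symmetry $C_{n+1,k} = C_{k+1,n}$ recorded just after \eqref{cnk-gf} to match the index convention, gives $N_\pi = C_{j-1,i-1}$ after verifying the orientation of the two arguments against the small cases in \cref{table:5}.

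\emph{The main obstacle} will be pinning down the exact difference-to-type-C identity with the correct normalization and index shift, since the raw statement $\Delta^{i-1}(B_{0,j})$ must be reconciled with the asymmetric indexing $C_{j-1,i-1}$ in the conclusion. I would settle this by using the two generating functions \eqref{bnk-gf} and \eqref{cnk-gf} directly: the forward difference $\Delta$ in the first index corresponds on the exponential-generating-function side to multiplication by $(e^x - 1)$, which converts the denominator $1 - e^{-x}$ of the type-B generating function into the denominator $e^x - 1$ of the type-C generating function (after accounting for the factor $e^{-x}$ relating $1-e^{-x}$ and $e^x-1$). Tracking this transformation carefully, together with the $C_{n+1,k} = C_{k+1,n}$ symmetry, should produce exactly $C_{j-1,i-1}$; the small numerical checks against \cref{table:5} (where the entries $1,1,1;1,3,7;1,7,31$ are precisely the type-C values $C_{0,0},C_{1,0},C_{2,0};\dots$) then confirm that no sign or off-by-one error remains.
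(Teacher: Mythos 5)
Your first step is correct, and it is essentially the paper's own route: since $r=n-p$ is the largest entry of $\pi^{\text{left}}$, it is automatically its greatest left-to-right maximum, so in \cref{thm:num-p-resultant-perms} one has $b=0$, $a=i-1$, $k=j$, whence $N_\pi=\Delta^{i-1}\big(B_{0,j}\big)$. (The paper instead cites \cref{cor:aht-theorem} together with the bijection from the proof of \cref{thm:p-resultant-configs}, but \cref{thm:num-p-resultant-perms} is itself deduced from \cref{thm:r-p-top-perm} and that same bijection, so the two reductions amount to the same thing.)

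The genuine gap is the step you deferred: the identity converting $\Delta^{i-1}(B_{0,j})$ into a type-C number is never proved, only ``expected up to a sign or index shift,'' and the numerical check you claim against \cref{table:5} does not actually hold. For $\pi^{\text{left}}=123$, $\pi^{\text{right}}=456$ (so $i=j=3$), \cref{table:5} records $N_\pi=31$, and indeed $\Delta^{2}(B_{0,3})=B_{2,3}-2B_{1,3}+B_{0,3}=46-16+1=31$, but $C_{j-1,i-1}=C_{2,2}=7$; likewise for $(i,j)=(2,1)$ the table gives $1$ while $C_{0,1}=0$. What the difference operator really produces follows from $B_{l,j}=B_{j,l}$ and inverting the first relation in \eqref{relationB_C}:
\[
\Delta^{i-1}\big(B_{0,j}\big)=\sum_{l=0}^{i-1}(-1)^{i-1-l}\binom{i-1}{l}B_{j,l}=C_{j,i-1},
\]
which equals $C_{i,j-1}$ by the symmetry $C_{n+1,k}=C_{k+1,n}$, and which matches every entry of \cref{table:5}: its rows are $(C_{1,0},C_{2,0},C_{3,0})=(1,1,1)$, $(C_{1,1},C_{2,1},C_{3,1})=(1,3,7)$, $(C_{1,2},C_{2,2},C_{3,2})=(1,7,31)$. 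So a careful execution of your own plan does not land on the stated formula; it shows the formula as printed carries an index typo, with the correct statement being $N_\pi=C_{j,i-1}=C_{i,j-1}$ (the same shift afflicts \cref{cor:aht-theorem}: for $n=5$, $p=r=3$, \cref{tab:n4 and 5rp} gives $31=C_{3,2}$, not $C_{2,2}=7$). Two further cautions about your fallback via generating functions: iterated differences at $0$ correspond to multiplying the exponential generating function by $e^{-x}$, not by $e^{x}-1$; and \eqref{cnk-gf} is transposed relative to \cref{tab:data-polybern}(b), so ``reconciling conventions by table-checking'' is exactly the place where the off-by-one must be pinned down rather than waved through.
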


\begin{proof}
This follows from \cref{cor:aht-theorem} and the bijection in the proof of \cref{thm:p-resultant-configs}.
\end{proof}

Using \cref{prop:symm}, we observe that the poly-Bernoulli numbers of type C occur when $r = n-p$ or $n-p+1$.

\section{Poly-Bernoulli numbers}
\label{sec:poly-bernoulli}

Here, we introduce the combinatorial sequences that will play the main role in this work.
Poly-Bernoulli numbers were introduced by M. Kaneko~\cite{KanekoIntro} as a generalization of the classical Bernoulli numbers during his investigations of multiple zeta values. 

\subsection{Poly-Bernoulli numbers of type B}

For integers $n>0$ and  $k$, poly-Bernoulli numbers of type B were originally  defined by the generating function \eqref{bnk-gf}.
From the combinatorial point of view poly-Bernoulli numbers with negative indices are of interest, since these numbers are nonnegative integers and there are several different combinatorial objects that are enumerated by these numbers. The array appears in OEIS as \cite[A099594]{OEIS}. 

The combinatorics of poly-Bernoulli numbers is very rich. The first combinatorial interpretation was given by Brewbaker \cite{Brewbaker} as the number of $n\times k$ \emph{lonesum matrices}, which are $\{0,1\}$-matrices uniquely determined by their row and column sum vectors. We now give some combinatorial objects counted by poly-Bernoulli numbers of type B that are relevant to the toppling process. For further examples, see \cite{BH} for instance.

Permutations with the restriction on the difference between position and value are well studied \cite{benyi-hajnal-2017,KimKrotovLee,
Launois,Sjostrand,vesztergombi-1974,lovasz-vesztergombi-1978}. 
We follow the convention of \cite{BH} and refer to them as \emph{Vesztergombi} permutations since they were first studied by Vesztergombi~\cite{vesztergombi-1974}.

\begin{definition}
\label{def:veszt}
Let $1 \leq k,n$. A permutation $\pi \in S_{k + n}$ is said to be a \emph{$(k,n)$-Vesztergombi permutation} if $-k \leq \pi_i - i \leq n$ for $1 \leq i \leq k + n$.
\end{definition}

Such permutations were first studied by Vesztergombi~\cite{vesztergombi-1974}.
An example of a $(9,6)$-Vesztergombi permutation in two-line notation is  
\begin{align}
\label{eg-vesz}
	\sigma =\left(
	\begin{array}{ccccccccccccccc}
		1 & 2& 3& 4& 5& 6&7&8&9&10&11&12&13&14&15\\
		1&6&4&8&7&10&12&11&13&3&2&9&5&14&15
	\end{array}\right).
\end{align}

\begin{theorem}[{\cite{Launois}}]
\label{thm:num-veszt}
The number of $(k,n)$-Vesztergombi permutations is $B_{n,k}$.
\end{theorem}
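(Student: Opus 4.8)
The plan is to show that the number $V(n,k)$ of $(k,n)$-Vesztergombi permutations satisfies the same recurrence and boundary conditions as the poly-Bernoulli numbers $B_{n,k}$ of type B, so that the two sequences coincide by induction.

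First I would record the recurrence for $B_{n,k}$ forced by the generating function \eqref{bnk-gf}. Writing $G_k(x)$ for the exponential generating function on the left of \eqref{bnk-gf} and using the operator identity $\Li_{-k}(z) = z\frac{d}{dz}\Li_{-(k-1)}(z)$ together with the substitution $z = 1 - e^{-x}$, under which $z\frac{d}{dz} = (e^x-1)\frac{d}{dx}$, a short computation gives $G_k = G_{k-1} + (e^x-1)\,G_{k-1}'$. Extracting the coefficient of $x^n/n!$ then yields
\[
B_{n,k} = B_{n,k-1} + \sum_{j=0}^{n-1}\binom{n}{j}\, B_{j+1,k-1},
\]
with boundary values $B_{n,0} = B_{0,k} = 1$.

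Next I would verify the boundary conditions on the permutation side. A $(0,n)$-Vesztergombi permutation satisfies $\pi_i \ge i$ for all $i$, which forces $\pi$ to be the identity; symmetrically a $(k,0)$-Vesztergombi permutation satisfies $\pi_i \le i$ and is again the identity. Hence $V(n,0) = V(0,k) = 1$, matching $B_{n,0}=B_{0,k}=1$. I would also note the reversal–complement map sending $\pi$ to the permutation whose $i$-th entry is $(n+k+1)-\pi_{n+k+1-i}$; since this negates every displacement $\pi_i - i$, it carries $(k,n)$-Vesztergombi permutations bijectively to $(n,k)$-Vesztergombi permutations, giving $V(n,k) = V(k,n)$ and mirroring the symmetry $B_{n,k} = B_{k,n}$. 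This lets me peel off either index.

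The heart of the argument is to realize the displayed recurrence combinatorially, that is, to partition the $(k,n)$-Vesztergombi permutations of $S_{n+k}$ into one block of size $V(n,k-1)$ together with blocks contributing $\binom{n}{j}\,V(j+1,k-1)$. The natural device is a deletion map that removes a distinguished element (for instance the largest value $n+k$, or a position realizing the extremal lower displacement $-k$) to produce a Vesztergombi permutation of smaller size with parameter $k-1$; the factor $\binom{n}{j}$ should record which of the $n$ upward slots remain active and the shift $j\mapsto j+1$ should arise from the accompanying position. The main obstacle is precisely here: deleting an element lowers the permutation size by one while the lower band tightens from $-k$ to $-(k-1)$ only along part of the word, so the deletion is not a clean bijection onto all of $V(n,k-1)$. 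I expect to resolve this by conditioning on the location of the deleted element relative to the positions at extremal displacement, checking that each fiber is counted by exactly one term of the recurrence and that the map is invertible. (Alternatively, one could route the entire proof through a bijection with the $n\times k$ lonesum matrices of \cref{sec:poly-bernoulli}, in which case establishing well-definedness and invertibility of that correspondence is the analogous crux.) Once the decomposition is in place, induction on $n+k$, using the symmetry above to assume $k \ge 1$, closes the proof.
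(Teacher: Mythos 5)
Your reduction of the theorem to a recurrence-plus-boundary check is sound in outline, and the routine parts are correct: the identity $\Li_{-k}(z)=z\frac{d}{dz}\Li_{-(k-1)}(z)$ does give $G_k=G_{k-1}+(e^x-1)G_{k-1}'$, hence $B_{n,k}=B_{n,k-1}+\sum_{j=0}^{n-1}\binom{n}{j}B_{j+1,k-1}$ (equivalent to \eqref{bnk-recur}); the boundary counts $V(n,0)=V(0,k)=1$ are right; and the reverse--complement symmetry $V(n,k)=V(k,n)$ is verified correctly. But the entire content of the theorem sits in the one step you did not carry out: showing that the Vesztergombi count $V(n,k)$ satisfies this recurrence. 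Saying you ``expect to resolve'' the obstacle by conditioning on positions of extremal displacement is an acknowledgment of a gap, not a proof.

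Moreover, the device you propose cannot work as described. Deleting a single distinguished element changes the permutation size from $n+k$ to $n+k-1$, which can only match the single term $V(n,k-1)$; the remaining terms $\binom{n}{j}V(j+1,k-1)$ for $j\leq n-1$ count permutations of sizes $k+j\leq n+k-1$, with $j$ varying, so any decomposition realizing the recurrence must remove a variable number of elements (or contract whole blocks), which a one-element deletion map cannot do. In addition, deleting, say, the value $n+k$ at position $i$ shifts every later position down by one, so the displacements $\pi_m-m$ on the tail all increase by one and may exit the band $[-k,n]$, while on the head they are unchanged and may still equal $-k$; the image need not be Vesztergombi for either parameter pair. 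This is exactly why the known arguments proceed differently: the recurrence \eqref{bnk-recur} is proved combinatorially in \cite{BH} on objects (lonesum matrices, Callan permutations) where the binomial coefficient records which of $n$ rows or elements attach to a new column or block, and the count for Vesztergombi permutations is then transported by a bijection --- the route of \cite{Launois} and of Kim--Krotov--Lee \cite{KimKrotovLee}. The paper itself does not reprove \cref{thm:num-veszt}; it quotes it from \cite{Launois} and supplies the combinatorial link through the bijection with Callan permutations in \cref{prop:bij-callan}. To complete your argument you would need either to construct the multi-element decomposition of Vesztergombi permutations realizing the recurrence, or to work out in full the bijection with lonesum matrices that you mention only parenthetically.
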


The double exponential generating function of poly-Bernoulli numbers is given by the elegant expression~\cite{KanekoIntro}
\begin{align*}
		\sum_{k=0}^{\infty}\sum_{n=0}^{\infty}B_{n,k}\frac{x^n}{n!}\frac{y^k}{k!} =
		\frac{e^{x+y}}{e^x + e^y -e^{x+y}}.
\end{align*}
Recall that $\sts{n}{m}$ is the \emph{Stirling number of the second kind}, which enumerates, among other things, set partitions of $[n]$ with $m$ parts.
The following three basic formulas were proven combinatorially in the literature:
\begin{itemize}
	\item[1.]
	the closed formula~\cite{BH,Brewbaker},
	\begin{align}
	\label{bnk-sum}
		B_{n,k} =  \sum_{m=0}^{\min(n,k)} (m!)^2 \sts{n+1}{m+1} \sts{k+1}{m+1},
	\end{align}
	
	\item[2.] the inclusion-exclusion type formula~\cite{Brewbaker},
	\begin{align}
	\label{bnk-ie}
			B_{n,k}=\sum_{m=0}^n (-1)^{n-m} m! \sts{n}{m}(m+1)^k, 
	\end{align}
	
	\item[3.] and the recurrence relation~\cite{BH},
	\begin{align}
	\label{bnk-recur}
			B_{n,k+1} = B_{n,k} + \sum_{m=1}^n\binom{n}{m}B_{n-(m-1),k}.
	\end{align}
\end{itemize}
The asymptotic for the diagonal entries is given in~\cite{Lundberg_Khera}
\begin{align*}
	B_{n,n} \sim \sqrt{\frac{1}{n\pi(1-\log 2)}}
	\frac{(n!)^2}{\left(\log 2\right)^{2n+1}}.
\end{align*}

We now define two other classes of objects counted by the poly-Bernoulli numbers. The first definition appears in \cite[Sequence~A099594]{OEIS} by D. Callan, hence the nomenclature in \cite{BH}.

\begin{definition}
\label{def:callan}
Let $1 \leq k,n$. A permutation $\pi \in S_{k + n}$ is said to be a \emph{$(n,k)$-Callan permutation} if each maximal contiguous substring whose support belongs to $\{1, 2,\ldots, n\}$ (resp. $\{n+1, n+2, \ldots, n+k\}$) is increasing (resp. decreasing). 
\end{definition}

In order to emphasize the two types in the set we sometimes distinguish the elements $\{1, 2,\ldots, n\}$ with an underline and the elements $\{n+1, n+2, \ldots, n+k\}$ with an overline. In this terminology, a Callan permutation is an alternating sequence of increasing underlined and decreasing overlined subsequences.

\begin{example} 
The set of $(2,2)$-Callan permutations ordered according to the first letter is listed below:
	\begin{align*}
		&\piros{12}\kek{43};\quad \piros{1}\kek{43}\piros{2};\quad \piros{1}\kek{3}\piros{2}\kek{4};\quad \piros{1}\kek{4}\piros{2}\kek{3}; \\
		&\piros{2}\kek{43}\piros{1};\quad   \piros{2}\kek{3}\piros{1}\kek{4};\quad  \piros{2}\kek{4}\piros{1}\kek{3};\\
		&\kek{3}\piros{12}\kek{4}; \quad \kek{3}\piros{1}\kek{4}\piros{2}; \quad  \kek{3}\piros{2}\kek{4}\piros{1};\\
		&\kek{4}\piros{12}\kek{3};\quad \kek{4}\piros{1}\kek{3}\piros{2}; \quad \kek{4}\piros{2}\kek{3}\piros{1};\quad \kek{43}\piros{12}.
	\end{align*}
\end{example}

\begin{definition}
\label{def:ao}
For any simple, undirected graph, an {\em orientation} is an assignment of arrows to the edges. An {\em acyclic orientation} (AO) is an orientation in which there is no directed cycle. It is easy to see that every graph has an acyclic orientation and every acyclic orientation has at least one {\em source} (vertex with no incoming arrows) and one {\em sink}
(vertex with no outgoing arrows).
\end{definition}

Recall that the complete bipartite graph $K_{m,n}$ is a graph with $m+n$ vertices such that the first $m$ vertices are adjacent to the last $n$ vertices, and there are no other edges.
The following result is a combination of several results in the literature.

\begin{proposition}
\label{prop:bij-callan}
The poly-Bernoulli number $B_{n,k}$ counts the following objects:
\begin{enumerate}
\item AOs of $K_{n,k}$,
\item $(n,k)$-Callan permutations.
\end{enumerate}
\end{proposition}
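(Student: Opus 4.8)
The plan is to prove \cref{prop:bij-callan} by establishing that both families of objects---acyclic orientations of $K_{n,k}$ and $(n,k)$-Callan permutations---are enumerated by $B_{n,k}$, ideally through a direct bijection between them or through a common bijection to a family already known to satisfy this count. Since \cref{thm:num-veszt} already tells us that $(k,n)$-Vesztergombi permutations are counted by $B_{n,k}$, one clean route is to produce bijections from each of the two families in the proposition to a reference family; but the most transparent approach is a direct bijection between AOs of $K_{n,k}$ and $(n,k)$-Callan permutations, which I expect to be the heart of the argument.

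\textbf{Acyclic orientations of $K_{n,k}$ via linear orders.} First I would recall the standard fact that an acyclic orientation of any graph is equivalent to a choice of a total preorder on its vertices that is a strict order on each edge; for a complete bipartite graph the structure is especially rigid. In $K_{n,k}$ the only edges run between the part $U=\{1,\dots,n\}$ (underlined vertices) and the part $V=\{n+1,\dots,n+k\}$ (overlined vertices), so an acyclic orientation is exactly an assignment of a direction to each $U$-$V$ pair with no directed cycle. The key observation is that a directed cycle in $K_{n,k}$ must alternate between the two parts, so acyclicity is equivalent to the existence of a consistent ranking: there is a linear arrangement of all $n+k$ vertices, read left to right, such that every edge is oriented from the earlier endpoint to the later endpoint. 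However, two arrangements give the same orientation precisely when they differ only by permuting vertices \emph{within a part between two consecutive vertices of the other part}, since edges only compare across parts. This ``collapsing within blocks'' is exactly the combinatorial content that will match the Callan condition.

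\textbf{The bijection to Callan permutations.} Given an acyclic orientation, I would form the permutation of $[n+k]$ by listing the vertices in a canonical left-to-right order consistent with the orientation, breaking ties within each maximal run of same-part vertices by writing underlined elements in increasing order and overlined elements in decreasing order. This canonical choice is forced and produces precisely an alternating sequence of increasing underlined blocks and decreasing overlined blocks, i.e.\ an $(n,k)$-Callan permutation in the sense of \cref{def:callan}. Conversely, given an $(n,k)$-Callan permutation, each maximal monotone block of one type sits entirely before or after each block of the other type, and reading across blocks recovers the orientation of every edge, while the within-block monotonicity carries no orientation information and is simply the canonical tiebreak. I would verify that these two maps are mutually inverse, which amounts to checking that the canonical ordering of an orientation, when its within-part ties are monotonized, is stable, and that re-reading a Callan permutation as an orientation and then re-canonicalizing returns the same permutation.

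\textbf{Main obstacle and closing the count.} The step I expect to require the most care is the correspondence between ``orientations'' and ``arrangements modulo within-block permutation,'' since a naive count of consistent linear arrangements overcounts, and one must argue that the monotone-block normalization selects exactly one representative per orientation; the alternating increasing/decreasing convention in \cref{def:callan} is precisely engineered to do this, and spelling out why no two distinct Callan permutations yield the same orientation is the crux. Once the bijection between parts (1) and (2) is established, it suffices to show that either family has cardinality $B_{n,k}$. For this I would invoke the formula \eqref{bnk-ie}, $B_{n,k}=\sum_{m=0}^n (-1)^{n-m} m!\sts{n}{m}(m+1)^k$, which can be read directly as an inclusion--exclusion count of acyclic orientations of $K_{n,k}$: fixing a surjective ``level'' assignment of the $U$-vertices into $m$ blocks (contributing $m!\sts{n}{m}$ with inclusion--exclusion sign) and placing each of the $k$ vertices of $V$ into one of the $m+1$ gaps (contributing $(m+1)^k$) enumerates exactly the acyclic orientations. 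This identifies the count of part (1) with $B_{n,k}$ and, through the bijection, that of part (2) as well, completing the proof.
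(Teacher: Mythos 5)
Your route is genuinely different from the paper's. The paper treats the two parts independently: part (1) is quoted from the literature (\cite[Theorem 2.1]{cameron2014acyclic}), and part (2) is proved by a bijection between Callan permutations and Vesztergombi permutations (a translation of the Kim--Krotov--Lee correspondence with lonesum matrices), which are counted by $B_{n,k}$ by \cref{thm:num-veszt}. You instead connect (1) and (2) to each other by a direct bijection and then try to count the acyclic orientations from scratch. Your bijection is sound, and the ``crux'' you point to can be closed cleanly: in an acyclic orientation of $K_{n,k}$ the in-neighborhoods of the vertices of $U$ must form a chain under inclusion (two incomparable in-neighborhoods produce a directed $4$-cycle $v \to u \to v' \to u' \to v$), so the vertices of each part fall into linearly ordered equivalence classes that alternate with those of the other part, and your monotone normalization selects exactly one linear extension per orientation; conversely, the maximal blocks of a Callan permutation are separated by nonempty blocks of the other type, so they are recovered as precisely the equivalence classes of the induced orientation. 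This unification is attractive and avoids the Vesztergombi machinery; what it loses relative to the paper is the refined Callan--Vesztergombi correspondence (matching first elements), which the paper reuses later in \cref{prop: perm_char}.

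The gap is in your closing count. The unsigned term $m!\sts{n}{m}(m+1)^k$ counts pairs consisting of an ordered set partition of $U$ into $m$ nonempty blocks together with an assignment of each vertex of $V$ to one of the $m+1$ gaps, and this genuinely overcounts acyclic orientations: whenever an internal gap receives no $V$-vertex, the two adjacent $U$-blocks have equal in-neighborhoods, so the pair yields the same orientation as the coarser one. (Already for $K_{2,1}$ the unsigned sum is $2+6=8$, while there are only $4$ acyclic orientations.) Hence the signed sum in \eqref{bnk-ie} cannot ``enumerate exactly the acyclic orientations'' termwise, and you must supply the cancellation mechanism. Two standard ways to finish: (i) invoke the fact that the number of acyclic orientations of a graph on $N$ vertices is $(-1)^N$ times its chromatic polynomial evaluated at $-1$, and compute $\chi_{K_{n,k}}(q)=\sum_m \sts{n}{m}\,q(q-1)\cdots(q-m+1)\,(q-m)^k$, which at $q=-1$ gives exactly \eqref{bnk-ie}; or (ii) note that an orientation whose $U$-classes have sizes $s_1,\dots,s_j$ appears in the $m$-th unsigned term once for each refinement of its blocks, and use the identity $\sum_{c}(-1)^{s-c}\,c!\sts{s}{c}=1$ blockwise to see that the signs collapse each orientation's total contribution to $1$. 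Alternatively, you could sidestep (1) altogether by counting your alternating block sequences directly against \eqref{bnk-sum}, or simply cite \cite{cameron2014acyclic} for (1) as the paper does; as written, though, this step is asserted rather than proved.
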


\begin{proof}
Part (1) is due to \cite[Theorem 2.1]{cameron2014acyclic}.
We prove (2) by constructing an explicit bijection between 
$(k,n)$-Callan permutations $\pi$ and $(k,n)$-Vesztergombi permutations $\sigma$ such that $\pi_1 = \sigma^{-1}_{n+1}$.

We sketch here a bijection 
along with an example. Since this bijection is essentially the translation of the bijection between Vesztergombi permutations and lonesum matrices given by Kim--Krotov--Lee~\cite[Appendix]{KimKrotovLee}, we will be sketchy. 
We refer the reader to the original paper for the details.  

Let $\pi$ be a $(9,6)$-Callan permutation, i.e., the number of underlined elements is $k=9$ and the number of overlined elements is $n=6$.
\begin{align*}
	\pi =\piros{5},\piros{7},\kek{12},\kek{11},\piros{1},\piros{4}, \piros{8}, \kek{14},\piros{3},\piros{6}, \piros{9}, \kek{15},  \kek{13},\kek{10}, \piros{2}.
\end{align*}

We determine the Vesztergombi permutation $\sigma \in S_{n+k}$ 
in the two-line notation for a permutation.
First, we describe how to associate pairs $(i, \sigma_i)$ to underlined elements. If the starting block is underlined, say $A_0=\{a_{0,1},a_{0,2},\ldots,a_{0,{\ell}_0}\}$, then we define the pairs: $(a_{0,1},n+1)$, $(a_{0,2},a_{0,1}+n+1)$, $\ldots$, $(a_{0,\ell_0-1},a_{0,\ell_0}+n+1)$. 
In our running example, we have a starting underlined block $\piros{5},\piros{7}$, so we get the pairs $(5,7)$ and $(7,12)$.
For any other underlined block, $A_i=\{a_{i,1},a_{i,2},\ldots,a_{i,{\ell}_i}\}$, we define $(a_{i,2}, a_{i,1}+n+1)$, $(a_{i,3},a_{i,2}+n+1)$, $\ldots$, $(a_{i,\ell_{i}},a_{i,\ell_{i-1}}+n+1)$. In our case, 
$\piros{1}, \piros{4}, \piros{8}$ determines $(4,8)$ and $(8,11)$,  $\piros{3},\piros{6}, \piros{9}$ give $(6,10)$ and $(9,13)$. 
The leading elements $r_{i,1}$ (in our case $\piros{1}, \piros{2}$ and $\piros{3}$) will be dealt later. 
A similar rule is applied for the overlined blocks.
Given an overlined block $B_i=\{b_{i,1},b_{i,2},\ldots,b_{i,{\ell}_i}\}$, we define $(b_{i,2}, b_{i,1}-k-1)$, $(b_{i,3},b_{i,2}-k-1)$, $\ldots$, $(b_{i,\ell_{i}},b_{i,\ell_{i-1}}-k-1)$. 
In our case,
\begin{align*}
	\kek{12},\kek{11}&\rightarrow (11, 2),\\
	\kek{15},  \kek{13},\kek{10}&\rightarrow (13, 5),\,(10, 3).
\end{align*}
The leadings elements here are $\kek{12}$, $\kek{14}$, $\kek{15}$. 

In our running example, we have so far,
\begin{align*}
\sigma =
	\left(
	\begin{array}{ccccccccccccccc}
		1 & 2& 3& 4& 5& 6&7&8&9&10&11&12&13&14&15\\
		&&&8&7&10&12&11&13&3&2&&5&
	\end{array}\right)
\end{align*}
We now have to fill in the remaining elements.
It can be checked that there are exactly $m$ such "missing" numbers from $\{1,\ldots, k\}$ and also $m$ from $\{k+1,\ldots, n+k\}$. 
Let $c_1<c_2<\cdots< c_m$ be the "missing" numbers from $\{1,\ldots, n\}$ and $d_1<\ldots< d_m$ the missing numbers from $\{n+1,\ldots, n+k\}$. 
In the example $\{c_1,c_2,c_3\}=\{1,4,6\}$ and $\{d_1,d_2,d_3\}=\{9,14,15\}$. 

We connect the leading elements of the blocks and the missing elements as follows: $(a_{i,1},c_i)$ and $(b_{i,1},d_i)$. 
Doing so in our running example we obtain
\begin{align*}
	(1,1)\, (3,4)\, (2,6)\, (12,9)\, (14,14)\, (15,15)
\end{align*}
and so these pairs together give the Vesztergombi permutation $\sigma$ in \eqref{eg-vesz}.  
\end{proof}

\begin{remark}
\label{rem:bnk-even}
We can see that $B_{n,k}$ are even integers for $n,k > 0$ as follows. It is trivial in any combinatorial model that $B_{n,1}$ are powers of $2$. For instance, in terms of Callan permutations, $B_{n,1}$ counts permutations where all consecutive entries are in increasing order except the element $n+1$. So, if $i$ denotes the number of elements to the left of $n$, we can count by $\sum_{i=0}^{n}\binom{n}{i} = 2^n$ the different permutations with such a property. 
The parity of $B_{n,k}$ for $k>1$ then follows from the recurrence \eqref{bnk-recur}.
\end{remark}

\subsection{Poly-Bernoulli numbers of type C}

Poly-Bernoulli numbers of type C were introduced by Kaneko analytically for general $k$ by the generating function in \eqref{cnk-gf}. 
Several combinatorial sequences are enumerated by these numbers as well when $k$ is a nonnegative integer. We list them below.

\begin{definition}
\label{def:exc}
An \emph{excedance} of a permutation $\pi$ is a position $i$ such
that $\pi_i > i$. The positions at which there are excedances for $\pi$ is
called the \emph{excedance set} of $\pi$. 
\end{definition}

Recall the definition of Vesztergombi permutations from \cref{def:veszt}, Callan permutations from \cref{def:callan} and acyclic orientations from \cref{def:ao}.

\begin{theorem}[{\cite[Theorems 12, 10, 19, 16]{benyi-hajnal-2017}}]
\label{thm:polyC-objects}
The poly-Bernoulli number $C_{n,k}$ counts the following objects:
\begin{enumerate}
\item the number of permutations in $S_{n+k}$ having excedance set $[k]$,

\item the number of permutations $\pi$ in $S_{n+k}$ with $-k\leq \pi(i)-i<n$, 

\item the number of AOs of $K_{n,k}$ with a unique sink,

\item $(n,k)$-Callan permutations that start with an underlined element.

\end{enumerate}
\end{theorem}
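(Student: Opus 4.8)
The plan is to anchor one of the four quantities to $C_{n,k}$ by a direct generating-function computation and then transport the count to the other three along bijections, reusing the type-B dictionary already set up in \cref{thm:num-veszt} and \cref{prop:bij-callan}. I would take item (2) as the anchor, since it is the strict variant of the Vesztergombi permutations of \cref{def:veszt}: a $(k,n)$-Vesztergombi permutation satisfies $-k\le \pi_i-i\le n$ and these are counted by $B_{n,k}$ (\cref{thm:num-veszt}), whereas item (2) imposes $-k\le \pi_i-i<n$. The number of permutations of $[n+k]$ obeying such two-sided displacement bounds is the permanent of a banded $0$–$1$ matrix, and the same transfer-matrix / exponential-generating-function computation that yields \eqref{bnk-gf} for the non-strict bound yields, for the strict bound, the denominator $e^{x}-1$ in place of $1-e^{-x}$; matching against \eqref{cnk-gf} gives item (2) $=C_{n,k}$. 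The clean analytic reason behind the change of denominator is the identity $e^{x}-1=e^{x}(1-e^{-x})$, so that \eqref{cnk-gf} is $e^{-x}$ times \eqref{bnk-gf}; equivalently $C_{n,k}$ is a binomial transform (finite difference) of the type-B numbers, which is precisely the inclusion–exclusion that removes the extremal displacement $\pi_i-i=n$.

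With item (2) pinned down, I would reach item (4) through the explicit bijection of \cref{prop:bij-callan}(2) between Vesztergombi permutations $\sigma$ and Callan permutations $\pi$, under which $\pi_1=\sigma^{-1}_{n+1}$. The forbidden extremal displacement that distinguishes item (2) inside the Vesztergombi set should translate, under this bijection, exactly into the condition that $\pi$ begins with an underlined letter rather than an overlined one; verifying this equivalence on the level of the leading block of $\pi$ is the content of the step. For item (1), I would either match its exponential generating function to \eqref{cnk-gf} directly — permutations of $[n+k]$ with excedance set exactly $[k]$ arise from those with excedance set contained in $[k]$ by inclusion–exclusion, the excedance positions $1,\dots,k$ supplying the Eulerian numerator of $\Li_{-k}$ and the non-excedance positions $k+1,\dots,n+k$ supplying the factor $1/(e^{x}-1)$ — or else give a direct excedance-to-displacement bijection onto item (2), taking care that the boundary case separating types B and C is handled on the nose.

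Item (3) I would obtain from the acyclic-orientation count of \cref{prop:bij-callan}(1): acyclic orientations of $K_{n,k}$ are counted by $B_{n,k}$ by \cite{cameron2014acyclic}, and the task is to align the unique-sink refinement on the graph side with the starting-block refinement on the Callan side under the correspondence between acyclic orientations and $(n,k)$-Callan permutations. For a given acyclic orientation one records the induced linear pre-order of the vertices and reads off the associated alternating sequence of increasing/decreasing blocks; the presence of a unique sink should be exactly what forces the leading block to be underlined.

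The step I expect to be the main obstacle is item (3): the dictionary between acyclic orientations and Callan permutations is the least transparent of the three, and tracking precisely where the sink/source data is encoded in the resulting block structure — so that ``unique sink'' corresponds cleanly to ``starts with an underlined block'' — is where the real work lies, the bare equinumerosity $B_{n,k}$ being already in hand. The two remaining subtle points are both of boundary type: making the anchor's inclusion–exclusion land on $e^{x}-1$ rather than $1-e^{-x}$, and ensuring that the single extremal displacement value is exactly the one traded away in passing from $B_{n,k}$ to $C_{n,k}$.
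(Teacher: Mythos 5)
First, a point of comparison: the paper does not prove this theorem at all. It is imported wholesale from \cite{benyi-hajnal-2017}, as the bracketed attribution in the theorem header indicates, and the paper only ever uses it as a black box (for instance in the proofs of \cref{thm:top-perms-fixed-p} and \cref{cor:aht-theorem}). So there is no internal proof to measure your proposal against; it has to stand on its own. As it stands, it does not: each of the four items terminates in an unverified correspondence (``should translate'', ``should be exactly what forces''), and you yourself leave item (3) --- matching the unique-sink refinement on acyclic orientations to the underlined-leading-block refinement on Callan permutations --- open as ``where the real work lies''. A plan whose every node is deferred is not yet a proof, so at minimum the proposal is incomplete.

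More seriously, your anchor step contains a concrete error, not just a gap. Because $C_{n,k}$ is \emph{not} symmetric in $(n,k)$ (only $C_{n+1,k}=C_{k+1,n}$ holds), it matters in which index you expand, and the two mechanisms you call equivalent are in fact different. Matching the EGF of the strict-displacement counts, taken in $n$ with $k$ fixed, against \eqref{cnk-gf} --- equivalently, using your factorization $e^x-1=e^x(1-e^{-x})$, i.e.\ the second identity of \eqref{relationB_C}, a transform in the \emph{first} index --- produces the transposed array, not the one in the theorem. Test case $(n,k)=(2,1)$: exactly three permutations of $S_3$ satisfy $-1\le \pi_i-i<2$ (namely $123$, $132$, $213$), and indeed $C_{2,1}=3$ in \cref{tab:data-polybern}(b); but $\Li_{-1}(1-e^{-x})/(e^x-1)=e^x$, whose coefficient of $x^2/2!$ is $1$. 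The inclusion--exclusion that actually proves item (2) runs over the at most $k$ positions with extremal displacement $\pi_i-i=n$ (possible only for $i\le k$), hence is a transform in the \emph{second} index, $C_{n,k}=\sum_{i}(-1)^{k-i}\binom{k}{i}B_{n,i}$, the inverse of the first identity of \eqref{relationB_C}; and one must still verify that forcing and deleting $j$ such extremal positions leaves a Vesztergombi permutation counted by $B_{n,k-j}$. Your word ``equivalently'' conflates these two identities, and only the second-index one gives the theorem. (The same trap is visible in the paper itself: \eqref{cnk-gf} and \cref{tab:data-polybern}(b) are transposes of one another, so ``match against \eqref{cnk-gf}'' cannot be taken at face value.) A repair is possible --- anchor item (2) by the position sieve just described, then transport along \cref{prop:bij-callan} after genuinely verifying the boundary claims --- but those verifications, together with all of item (3), are precisely the substance of the proofs in \cite{benyi-hajnal-2017} that you would be reproducing.
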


The bivariate exponential generating function for $C_{n,k}$ is given by
\begin{align*}
	\sum_{n=1}^{\infty}\sum_{k=1}^{\infty}C_{n,k}\frac{x^n}{n!}\frac{y^k}{k!}=
	\frac{e^{x}}{e^x+e^y-e^{x+y}}.
\end{align*}
The analogue formulas to the ones of the poly-Bernoulli numbers of type B are as follows \cite{benyi-hajnal-2017}: 

\begin{itemize}
	\item[1.]
	the closed formula,
	\begin{align}
	\label{cnk-sum}
		C_{n,k}=\sum_{m=0}^{\min(n,k)}(m!)^2\sts{n+1}{m+1}\sts{k}{m},
	\end{align}
	
	\item[2.] the inclusion-exclusion type formula,
	\begin{align}
	\label{cnk-ie}
		C_{n,k}= \sum_{m=0}^n(-1)^{n+m}m!(m+1)^k\sts{n+1}{m+1}, 
	\end{align}
	
	\item[3.] and the recurrence relation
	\begin{align}
	\label{cnk-recur}
	C_{n,k+1}=\sum_{m=1}^{n}\binom{n}{m}C_{n-m+1,k},\qquad n\geq 1,\quad k\geq 0.
	\end{align}
\end{itemize}

The relation between the two types of poly-Bernoulli numbers can be expressed by the following equations.

\begin{align}\label{relationB_C}
	B_{n,k} &= \sum_{i=0}^k\binom{k}{i} C_{n,i}  \quad \mbox{and} \quad C_{n,k} = (-1)^n\sum_{i=0}^{n} (-1)^i\binom{n}{i}B_{i,k},
\end{align}
\begin{align*}
	B_{n,k} =C_{n,k}+C_{n+1,k-1}.
\end{align*}
The asymptotic for the diagonal entries is given in \cite{Lundberg}

\begin{align*}
	C_{n,n} \sim \left(\frac{1}{2\log 2 \sqrt{1-\log 2}}+o(1)\right)
	\frac{n!}{\left(2\log 2\right)^n} .
\end{align*}

\section{Toppleable configurations and permutations}
\label{sec:toppleable}

In this section, we will be interested in configurations and permutations that get sorted after the toppling process, i.e., the final configuration is the identity permutation. The data for the number of toppleable configurations is given in \cref{tab:topp-confs}.

\begin{table}[h!]
\begin{center}
\begin{tabular}{c|cccccc}
$n \backslash p$ & 1 & 2 & 3 & 4 & 5 & 6 \\
\hline
1 & 1 \\
2 & 2 & 2 \\
3 & 4 & 7 & 4 \\
4 & 16 & 73 & 115 & 73 & 16 & \\
5 & 32 & 227 & 533 & 533 & 227 & 32
\end{tabular}
\caption{The number of toppleable configurations in $\mathcal{S}(n,p)$ for small values of $n$ and $p$.}
\label{tab:topp-confs}
\end{center}

\end{table}

Since the order of the topplings does not influence the final configuration by \cref{prop:determ}, we can define a special order of the topplings, that is easy to analyze. This idea leads to the notion of a \emph{pass} that we recall from \cite{AHT}. 
For the sake of simplicity, we focus now only on the number of chips at each site. We start with the unlabeled configuration $(\_,1,\ldots, 1,1,\hat{2},1,1,\ldots, 1,\_)$ on $L_n$ where the hat denotes the site $p$. The first toppling is necessarily on site $p$, leading to $(\_,1,\ldots, 1,2,\hat{\_},2,1,\ldots, 1,\_)$. Next, we topple the chips on the sites to the left of $p$, (the $(p-1)$'th site) and to the right of $p$ (the $(p+1)$'th site) to get $(\_,1,\ldots, 1,2,\_,\hat{2},\_,2,1,\ldots, 1,\_)$. From now on we leave the two chips on site $p$, while we topple the chips on right and left, as long as it is possible. 
The sequence of these topplings is called the \emph{first pass}.
After the first pass we end up with $(1,\_,1,\ldots, 1,1,\hat{2},1,1,\ldots, 1,\_, 1)$. 
Clearly, if $p$ is not in the center, on one side we will have more topplings. However, the first pass includes all together $n+1$ topplings. 

Similarly, we perform the \emph{second pass} starting with the toppling of the two chips at site $p$, and continue similarly as before ending up with $(1,1,\_,1,\ldots, 1,1,\hat{2},1,1,\ldots, 1,\_, 1,1)$. We continue this way, settling the topplings in passes. 
After $\min(p,n+1-p)$ passes, we will arrive at a final configuration in which no site contains more than one chip.

The intermediate configuration after every pass can be decomposed into three parts depending on the locations of the empty sites. We call the part of the configuration to the left of the first empty site $(1,1,\ldots, 1,\_)$ the \emph{left arm}, the part to the right of the second empty site $(\_,1,1,\ldots, 1)$ the \emph{right arm}, and the part between the two empty parts the \emph{active part}. 

We now list some important observations about these pass moves which follow from \cite{AHT} when considering the toppling process on a configuration in $\mathcal{S}(n,p)$.

\begin{remark}
\label{rem:passes}

\begin{enumerate}
\item At the end of any pass, the chips in the left and right arms are `frozen', i.e. these chips do not change their positions in any topplings or passes thereafter. 

\item Every chip in the active part topples at least once during a pass.

\item 

After the last pass, there are only two parts: the left arm containing a chip each on the sites $0, 1,\ldots, n-p$, and the right arm containing a chip each on the sites $n-p+2,\ldots, n+1$.

\item Assume $a$ and $b$, with $a<b$, are the chips at the site $p$ at the beginning of a pass. In a pass, $a$ propagates to the left, until it meets a smaller element, $a_1$ say. At that point, $a$ is stuck at that position, and $a_1$ moves to the left until it meets a smaller element, and so on. Similarly $b$ moves to the right, until it meets a greater element, $b_1$ say, after which it gets stuck. Then $b_1$ moves right until it meets a greater element and so on. 

To summarize, to the left of site $p$, chips $a>a_1>\cdots>a_j$ move some positions to the left, while all the other chips are just shifted by one position to the right without changing their relative positions. Similarly, to the right of $p$, chips $b<b_1<\cdots b_k$ move some positions to the right, while all the other chips are only shifted by one position to the left without changing their relative positions.   
\end{enumerate}
\end{remark}

We define some more terminology in order to be able to talk about the toppling process more precisely. 
We refer to the active sites $i$ with $i \leq p$ as the \emph{left part}
and those active sites $i$ with $i \geq p$ as the \emph{right part}. See \cref{fig:ithpass} for an illustration of the terminology used in the proofs.

\begin{center}
\begin{figure}[h!]
\includegraphics[scale=1]{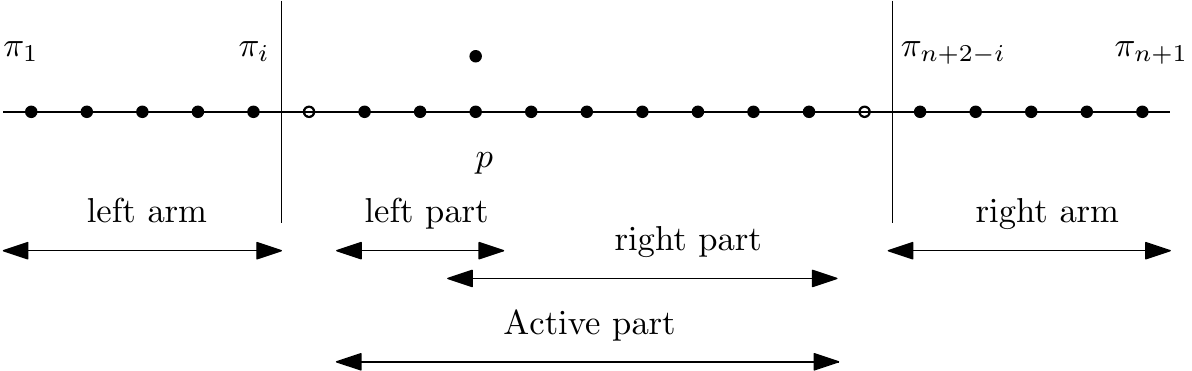}
\caption{Summary of the terminology used in the discussion regarding what happens at the end of the $i$'th pass for a configuration in $\mathcal{S}(n,p)$.}
\label{fig:ithpass}
\end{figure}
\end{center}
 
Let $S_0$ denote the set of chips that are on sites $i\leq p$, i.e. set of the chips in the left part and the two chips at site $p$ in the initial configuration.  Similarly, let $T_0$ be the set of chips in the right part and on site $p$ in the initial configuration.
		In general, let $S_j$ be the set of chips on the sites $\leq p$, and $T_j$ be the set of chips on the sites $\geq p$ after the $j$'th pass. It is easy to see that the least element in $S_0$ will topple during the first pass to the first position as the first element in the left arm, becoming the first element of the resulting sorting, $\pi_1$. Similarly, the greatest element of $T_0$ will topple to the last site during the first pass and be frozen there.

By \cref{rem:passes}(1), if a configuration is toppleable then chips $i$ and $n-i+1$ have to be in their correct positions after the $i$'th pass. This fact restricts the positions of the chips in the initial configuration in a toppleable configuration. As we will see in the next proposition this property characterizes toppleable configurations. The idea of the proof is similar to that of the classification in \cite[Theorem 3.4]{AHT} up to a point.
The novelty here is that the number of passes is potentially much smaller than $\floor{n/2}+1$.

For a configuration $C$, let $C^{-1}(i)$ denote the position of the chip labeled by $i$ in the initial configuration.  

\begin{theorem}
\label{thm:config_char}
 	A configuration $C \in \mathcal{S}(n,p)$ is toppleable if and only if for $1\leq i\leq n+1$
 	\begin{align}\label{condition}
 		p+i-n-1\leq C^{-1}(i) \leq p+i-1.
 	\end{align}
\end{theorem}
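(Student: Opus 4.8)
The plan is to prove the equivalence by induction on $n$, exploiting the self-similarity of the toppling process under a single pass. Recall from the discussion preceding \cref{thm:config_char} and from \cref{rem:passes} that the first pass freezes the smallest chip on the sites $\le p$ at site $0$ and the largest chip on the sites $\ge p$ at site $n+1$, after which these two chips never move again (\cref{rem:passes}(1)). Consequently, if $C$ is toppleable then the chips frozen at sites $0$ and $n+1$ after the first pass must be $1$ and $n+1$, and this happens exactly when $C^{-1}(1)\le p$ and $C^{-1}(n+1)\ge p$; conversely these two conditions force the first pass to freeze $1$ and $n+1$ in place. These are precisely the cases $i=1$ and $i=n+1$ of \eqref{condition}. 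If either fails then $C$ is not toppleable and \eqref{condition} also fails, so the desired equivalence is trivially true; hence I may assume both hold.

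Write $D^{-1}(i)$ for the site of chip $i$ at the end of the first pass. Once $1$ and $n+1$ are frozen, the active part occupies sites $2,\dots,n-1$, carries the values $\{2,\dots,n\}$ with a doubled chip at site $p$, and (by \cref{prop:determ} and \cref{rem:passes}(1)) evolves without any further interaction with the frozen arms; relabelling sites and values by subtracting $1$ identifies it with a configuration $C'\in\mathcal{S}(n-2,p-1)$ whose toppling is exactly the continuation of the toppling of $C$. Hence $C$ is toppleable if and only if $C'$ is, and by the inductive hypothesis $C'$ is toppleable iff it satisfies \eqref{condition} for $\mathcal{S}(n-2,p-1)$. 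Undoing the relabelling, the latter reads $p+i-n\le D^{-1}(i)\le p+i-2$ for $2\le i\le n$. Thus the inductive step reduces to the single-pass equivalence, for each active chip $i$:
\[
p+i-n-1\le C^{-1}(i)\le p+i-1
\quad\Longleftrightarrow\quad
p+i-n\le D^{-1}(i)\le p+i-2 .
\]

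Proving this windowed equivalence is the technical core, and I would settle it by tracing each chip through one pass using \cref{rem:passes}(4): to the right of $p$ a chip not among the outward movers $b<b_1<\cdots$ shifts left by one, a left chip not among the left movers shifts right by one, and the movers on each side travel outward. The cases in which a chip drifts the ``easy'' way are immediate — e.g.\ a right non-mover gives $D^{-1}(i)=C^{-1}(i)-1$, and a left mover ends at a site $\le p\le p+i-2$ — and checking all four cases yields at once the backward implication $p+i-n\le D^{-1}(i)\le p+i-2\Rightarrow p+i-n-1\le C^{-1}(i)\le p+i-1$, which is what the necessity of \eqref{condition} requires.

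The delicate point, which I expect to be the main obstacle, is the forward direction when chip $i$ is one of the right movers and advances: a priori it can travel far, so one must show it still lands at site $\le p+i-2$. The mechanism is a counting bound on movers. For instance, a chip at the extreme site $C^{-1}(i)=p+i-1$ cannot move right at all: were it a mover, the chip $b$ leaving site $p$ to the right together with the $i-2$ chips strictly between sites $p$ and $p+i-1$ would be $i-1$ values all smaller than $i$, hence exactly $\{1,\dots,i-1\}$, leaving no admissible value for the smaller chip at site $p$ — a contradiction. The same pigeonhole, applied to the chips lying between consecutive movers, caps how far any mover can advance and delivers $D^{-1}(i)\le p+i-2$ in general. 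The lower bound is the mirror image and follows from the reflection symmetry of \cref{prop:symm}, which interchanges the two arms and the two inequalities in \eqref{condition}; the base cases $p=1$ and $p=n$, where no reduction is available, are checked directly.
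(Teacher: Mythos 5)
Your proof is correct, but it takes a genuinely different route from the paper's. The paper keeps the configuration fixed and inducts on passes: \cref{lem:j_pass_char} and \cref{lem:j_pass_char_2} show that each pass tightens the admissible window of every unfixed chip by one, and, since the process stops after $\min(p,n-p+1)$ passes, the paper then needs a separate endgame argument showing the ``middle'' chips $n-p+2,\dots,p-1$ fall into place in the last pass; necessity is proved directly from the observation that a chip moves at most one site per pass in the unfavorable direction. You instead induct on $n$, using the self-similarity you correctly identify: once chips $1$ and $n+1$ freeze (equivalently $C^{-1}(1)\le p\le C^{-1}(n+1)$), the active part after one pass is literally a configuration in $\mathcal{S}(n-2,p-1)$, so both directions of the theorem collapse to a single one-pass window lemma, with your backward cases supplying necessity. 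Your pigeonhole does extend beyond the extreme case: if a right mover $i$ starting at site $q\le p+i-1$ were to land at site $\ge p+i-1$, then $a$, $b$, and all occupants of sites $p+1,\dots,p+i-1$ other than $i$ itself would be $i$ distinct values below $i$, a contradiction; notably this is pure counting using only chip $i$'s own window, whereas the paper's \cref{lem:j_pass_char_2} invokes hypothesis \eqref{condition} for the chip $a_k$ on whose site the tracked chip lands. Two caveats on your write-up: the displayed one-pass equivalence must carry the standing assumption on chips $1$ and $n+1$ (for instance, when $p+i-2\ge n$ the upper bound for a mover holds only because no chip other than $n+1$ can land on site $n+1$); and the base cases $p=1$, $p=n$ are not a mere verification --- there, sortedness pins \emph{exact} final positions rather than window containment, so they require the full mover-chain/pigeonhole analysis and play the role of the paper's middle-chip step. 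Both gaps are routine to fill with the tools you already set up, so I consider the proof sound; what your scheme buys is a uniform induction with no special terminal argument and a cleaner counting core, at the price of re-running the one-pass analysis in a sharper form at the base.
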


It will be useful to state two lemmas explicitly as a preparation of the proof of the theorem.
The proof of the first one, \cref{lem:j_pass_char} 
is directly based on the proof in \cite{AHT}. 
Let $C^{-1}_j(i)$ denote the position of the chip $i$ after the $j$'th pass.

\begin{lemma}
\label{lem:j_pass_char}
Suppose that we have a configuration $C \in \mathcal{S}(n,p)$.
If $p+i-n-1\leq C^{-1}(i) \leq p+i-1$ holds for all chips $i$ then after $j$ ($j\leq p$) passes, the chips $1,2,\ldots, j$  and  $n+1, n,\ldots, n-j+2$  are fixed in their correct positions. 
Furthermore, for the chips  $i=j+1, j+2, \ldots, p$, 
$C^{-1}_j(i)<p+i-j$, 
while for $i=n-j+1,n-j,\ldots p$.
$C^{-1}_j(i)>p+i-1-n+j$.
\end{lemma}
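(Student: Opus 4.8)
The plan is to induct on the number of completed passes $j$, after first reducing by \cref{prop:symm} to the case $p \le n-p$, so that the process consists of exactly $p$ passes and every index $0 \le j \le p$ names a genuine intermediate configuration. The base case $j=0$ is nothing but the hypothesis $p+i-n-1 \le C^{-1}(i) \le p+i-1$: the assertion about fixed chips is vacuous, the left inequality gives $C^{-1}(i) \le p+i-1 < p+i$, and the right inequality gives the required lower bound (the strict form for the two extreme chips being recovered as soon as they freeze during the first pass).

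For the inductive step I would first record the bookkeeping of the frozen arms: by the inductive hypothesis the left arm holds chips $\{1,\dots,j\}$ and the right arm holds $\{n-j+2,\dots,n+1\}$, so the active chips are exactly $\{j+1,\dots,n-j+1\}$, with empty sites at $j$ and $n-j+1$. I then run pass $j+1$ using \cref{rem:passes}(4). The smaller chip $a$ at site $p$ initiates a leftward chain of successive minima that pushes the global active minimum into the empty site $j$; since $\{1,\dots,j\}$ are already frozen, this minimum is precisely $j+1$, and site $j$ is exactly its sorted position. Dually the active maximum $n-j+1$ freezes at site $n-j+1$, which (as $j<p$) is its sorted position. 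This establishes the first assertion for $j+1$.

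The remaining work is to propagate the displacement bounds, and here I would classify each surviving chip $i$ with $j+2 \le i \le p$ by its behaviour during the pass. If $i$ lies to the left of the hat and is one of the leftward-moving minima, its position strictly decreases and $C_j^{-1}(i) < p+i-j$ immediately yields the sharpened bound $C_{j+1}^{-1}(i) < p+i-(j+1)$. If $i$ lies strictly to the left of the hat and is passive, then by \cref{rem:passes}(4) it is merely shifted one step to the right; the crucial observation is that such a chip occupies a site $\le p-1$, so afterwards $C_{j+1}^{-1}(i) \le p$, and since $i \ge j+2$ we have $p < p+i-(j+1)$, so the bound holds with room to spare. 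If $i$ lies to the right of the hat and is passive, it is shifted one step to the left and its position only decreases. The lower bounds for the right-hand chips $i=p,\dots,n-j$ are then obtained by the mirror-image argument furnished by \cref{prop:symm}.

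The one genuinely delicate case, and the step I expect to be the main obstacle, is a small-valued chip $i \le p$ that sits to the right of the hat and belongs to the rightward cascade: here its position increases and the naive estimate does not close. The resolution rests on the fact that \cref{rem:passes}(4) lets such a chip advance only until it meets the next larger running maximum, so its net rightward displacement is controlled by the number of still-smaller chips lying immediately to its right; bounding that count by means of the inductive lower bounds on the larger chips forces $i$ to have started far enough below its threshold that the sharpened bound survives. This displacement count for running maxima near the hat is the quantitative heart of the argument, and it is precisely where the reasoning follows the classification in \cite[Theorem 3.4]{AHT} most closely.
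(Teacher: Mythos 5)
Your outer skeleton---induction on passes, base case equal to the hypothesis, then a per-chip case analysis of one pass via \cref{rem:passes}(4)---is the same as the paper's, and your easy cases do close (with the minor caveat that a leftward cascade chip's position only \emph{weakly} decreases, since it can bounce back to its starting site; this is harmless because such chips start at sites $\leq p-1$ and $i \geq j+2$ leaves slack). The genuine gap is exactly the case you flag as the main obstacle, and the mechanism you sketch for it is the wrong one. The paper closes that case with a second, \emph{inner} induction on the chip labels $i = j, j+1, \ldots, p$: during the $j$'th pass, each chip with label $i' < i$ never moves to the right of its own threshold site $p+i'-j$, hence no chip smaller than $i$ can ever occupy site $p+i-j$; consequently, if chip $i$ reaches, or begins the pass at, site $p+i-j$, the chip that joins it there is necessarily \emph{greater}, so chip $i$ topples left and lands strictly to the left of $p+i-j$. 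This label induction is the missing idea, and nothing in your proposal substitutes for it.

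To see concretely why your sketch does not close, suppose chip $i$ begins the pass at site $p+i-j$ (permitted by the outer hypothesis $C^{-1}_{j-1}(i) < p+i-j+1$) and the resident at site $p+i-j+1$ is greater than $i$. If the cascade chip arriving at chip $i$'s site were smaller than $i$, then chip $i$ would join the cascade, step right once, bounce off the greater resident, and end the pass at site $p+i-j$---violating the strict bound you must prove. Ruling this out requires knowing that no smaller chip can reach site $p+i-j$ at all, which is precisely the inner-induction statement; the ``inductive lower bounds on the larger chips'' you invoke (the right-hand half of the lemma) say nothing about how far right the \emph{smaller} chips can travel, and your displacement count is silent when that count is zero. (When the count is positive your counting does work: the last displaced resident has label at most $i-1$, so by the outer hypothesis it sits strictly left of $p+i-j$, which bounds chip $i$'s landing site. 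It is only the zero-displacement, start-at-threshold case that forces the induction on labels.) Two smaller points: your freezing step should record that $C^{-1}_j(j+1) \leq p$, i.e.\ chip $j+1$ actually lies in the left part, before declaring it the minimum that lands at site $j$; and the reduction to $p \leq n-p$ via \cref{prop:symm} needs care, since the lemma is asserted for all $j \leq p$ while the reflected configuration undergoes fewer passes when $p > n-p$.
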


\begin{proof}
	The lemma states that the distance between the chip labeled by $i$ and the $i$'th site is reduced by at least one in each pass.
	
	We argue using induction on $j$. The base case is $j=0$ which is fulfilled by assumption. Now suppose the statement holds for the pass $j-1$, and consider the $j$'th pass. For chip $i=j$, we have $C^{-1}_{j-1}(j) < p+j-(j-1) = p+1$, which means $C^{-1}_{j-1}(j)\leq p$. 
	Recall from \cref{rem:passes}(3) that a chip to the left of $p$ moves to the left arbitrary many (more precisely, the number of consecutive greater elements to its left)  sites to the left and at most one site to the right. 
	
	Since $j$ is the smallest non-fixed chip from $C^{-1}_{j-1}(j)\leq p$, $j=\min S_{j-1}$. Hence we deduce that in the $j$'th pass, it will topple to the left until it is fixed, and so $\pi_j=j$. Thus, after $j$ passes, the chips $1,2,\ldots, j$ are fixed in the right order. 
	
	For chips $i=j,\ldots, p$, we use an inductive argument. 
We show that during the $j$'th pass the chip $i$ does not move to the right of site $p+i-j$ and must land strictly to the left of the site $p+i-j$. The base case is $i=j$ and it follows from the previous consideration.
	Assume now that the statement holds for $j, j+1, \ldots, i-1$.  Note that from the outer induction we have $C^{-1}_{j-1}(i)<p+i-j+1$. We want to show that  after the $j$'th pass the chip $i$ is to the left of the site $p+i-j$.
	\begin{itemize}
		\item if $C^{-1}_{j-1}(i)<p$, then the chip $i$ can move in the $j$'th pass at most one site to the right. Then it is still strictly to the left of $p+1$, which is included in $p+i-j$ for the values of $i$ considered here. 
		\item if $C^{-1}_{j-1}(i)\geq p$ then the chip $i$ can move to the right multiple times. There are two possibilities here: either it never reaches the site $p+i-j$, in which case we are done, or it reaches (or starts) at the site $p+i-j$. Note that according to the induction hypothesis no chips smaller than $i$ can reach the site $p+i-j$. Thus, chip $i$ "meets" a greater chip at this site, and topples to the left during the $j$'th pass, landing to the left of the site $p+i-j$.
	\end{itemize}
	Hence, in either case, we have shown that $C^{-1}_{j}(i)<p+i-j$.
	
	A similar argument shows that the chips $n+1, n,\ldots, n-j+2$  (where $j\leq p$) are fixed in their correct positions after the $j$'th pass and for the chips $i=n-j+1, n-j, \ldots, p$ the condition $C^{-1}_j(i)>p+i-1-n+j$ holds. 
\end{proof}

Our next lemma shows that we can say even more about the positions of the chips after a given number of passes based on their initial positions. 

\begin{lemma}
\label{lem:j_pass_char_2}
Suppose that we have a configuration $C \in \mathcal{S}(n,p)$.
	If $p+k-n-1\leq C^{-1}(k) \leq p+k-1$ holds for all chips $k$ and $i\leq p-1$, we  have $p+i-1-n+j<C^{-1}_j(i)$.
\end{lemma}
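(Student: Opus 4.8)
The plan is to mirror the inductive scheme used for \cref{lem:j_pass_char}, but now tracking the \emph{leftmost} site a chip can occupy rather than the rightmost one. I will induct on the pass number $j$, the base case $j=0$ being exactly the hypothesis $p+i-1-n\leq C^{-1}(i)$. Throughout I use \cref{rem:passes}(4): during a single pass, to the left of site $p$ the chips on the descending chain emanating from $p$ slide several sites to the left, while every other chip there is pushed exactly one site to the right (and symmetrically on the right of $p$). The outer induction will show that this bound improves by one with each pass for the chips that are not yet frozen.

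For the inductive step I fix a chip $i\leq p-1$ and split on the location of $C^{-1}_{j-1}(i)$ relative to $p$. If $C^{-1}_{j-1}(i)\geq p+1$, then $i$ lies in the right part at the start of the pass, so it drifts at most one site leftward and ends at a site $\geq p$; since $i\leq p-1$ and $j\leq n+1-p$ force $i+j\leq n$, we get $p\geq p+i-1-n+j$ and the bound holds with room to spare. The substantive case is $C^{-1}_{j-1}(i)\leq p$, where $i$ sits in the left part. If $i$ does \emph{not} lie on the descending chain of this pass, it is shifted one site to the right, so by the outer induction $C^{-1}_j(i)=C^{-1}_{j-1}(i)+1\geq p+i-1-n+j$, as required.

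The crux is when $i$ lies on the descending chain and hence bubbles several sites to the left. Here I run a secondary induction on the chips in \emph{decreasing} order of label, dual to the increasing-label argument inside \cref{lem:j_pass_char}. Having already treated every chip $k>i$, each such $k$ satisfies $C^{-1}_j(k)>p+k-1-n+j\geq p+i-n+j$ (by the secondary induction when $k\leq p-1$, and by the right-part bound of \cref{lem:j_pass_char} when $k\geq p$), so after the pass all chips larger than $i$ sit strictly to the right of the critical site $p+i-1-n+j$. Since a chip on the descending chain only passes chips larger than itself, the first occupied site that $i$ reaches at or before $p+i-1-n+j$ must carry a chip \emph{smaller} than $i$; that smaller chip halts the bubbling and pins $C^{-1}_j(i)$ at a site exceeding $p+i-1-n+j$.

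The main obstacle is exactly this chain case: one must guarantee that a smaller chip is actually present to arrest the leftward motion at the right moment. Two points need care. First, the secondary induction must be arranged so that the post-pass positions of all larger chips are available when $i$ is processed; the decreasing-label order makes this consistent, because a larger label carries a strictly larger lower bound. Second, one must rule out the degenerate possibility that the critical site coincides with one of the two empty sites bordering the active part as $i$ passes through, leaving nothing there to stop it; this is where the arm/active-part bookkeeping of \cref{rem:passes} and \cref{fig:ithpass}, together with the companion upper bounds of \cref{lem:j_pass_char} confining the smaller chips to the relevant window, are brought to bear.
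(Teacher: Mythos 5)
Your proof is correct and follows essentially the same route as the paper's: a double induction (over passes and over chips taken in decreasing label order), the case analysis driven by \cref{rem:passes}(4), and the crux — that a chip bubbling left along the descending chain is stopped by a smaller chip before crossing the critical site — settled by the stronger lower bounds already available for all larger chips (from the inner induction for chips below $p$ and from \cref{lem:j_pass_char} for chips at or above $p$). The paper closes that step a little more directly, by noting that chip $i$ lands exactly at the pre-pass position of the last greater chip $a_k$ it passes and applying the positional bound to $a_k$, whereas you argue by contradiction using the post-pass positions of the larger chips together with the push-right-by-one behavior of non-chain chips; both versions rest on the same mechanism and share the same (paper-inherited) looseness at the boundary cases.
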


\begin{proof}

	To prove this inequality we use now induction on the chips $i=p-1,p-2,\ldots$ and on the passes, $j$. 
	The initial case is when $i=p-1$ and $j=1$.
	We have to show that $p+(p-1)-1-n+1<C^{-1}_1(p-1)$.
	
	We focus on the moves of the chip $p-1$ during the first pass. By \cref{rem:passes}(2), chip $p-1$ topples at least once during the pass. If this toppling is to the right, we are done.
	But if either
	\begin{enumerate}
		\item[(a)] it is initially at the $p$'th site sharing this site with a greater chip,
		\item[(b)] or a greater chip topples during the pass to the site of the chip $p-1$,
	\end{enumerate}
	the chip $p-1$ will topple first to the \textit{left}.
	
	Moreover, if there is a greater chip to its left, then again topples to the left and so on, until it "meets" a smaller chip. In the last toppling, $p-1$ has to topple to the right and does not move in the pass anymore.
	Let $a_1$ denote the chip that is greater than $p-1$ and shares the site of $p-1$ when it topples first to the left. In case (a), this is the other chip on the site $p$ in the initial configuration, in case (b) this is a chip that topples to the site of $p-1$ from the right side.
	Let $a_2$ denote the next greater chip that $p-1$ meets and forces $p-1$ topple to the left again, and so on. 
	Let $a_k$ the last greater chip in this sequence, so that after the chips $a_k$ and $p-1$ shared a site, and since $p-1<a_k$, $p-1$ topples to the left, but the next chip $p-1$ meets is a smaller element. In other words on the site next to the left of the chip $a_k$ -- $C^{-1}(a_k)-1$ -- there is a chip $b<p-1$. 
	In the toppling where $b$ and $p-1$ is on the site $C^{-1}(a_k)-1$, the chip $p-1$ topples to the right, i.e. back to the original site of $a_k$, $C^{-1}(a_k)$.
	We assumed that the condition $p+i-n-1\leq C^{-1}(i) <p+i-1$ holds for all $i$, and so also for the chip $i=a_k$. 
	Therefore, we have
	\begin{align*}
		p+a_k-n-1\leq C^{-1}(a_k)\leq p+a_k-1.
	\end{align*}
	Since $a_k>p-1$, we have
	\begin{align*}
		p+(p-1)-n-1+1\leq p+a_k-n-1\leq C^{-1}(a_k)
	\end{align*}
	
	As we have seen that the chip $p-1$ will land at the position of $a_k$ after the first pass, it follows that after the first pass the position of $p-1$ is at least $p+p-1-n-1+1$, which is what we wanted to show.
	
	The same argument can be used for all the chips $i\leq p$ in the first pass, hence we have $p+i-1-n+1<C^{-1}_1(i)$ for all $i \leq p$. Exactly the same kind of argument can be used in all passes, which implies the general statement $p+i-1-n+j<C^{-1}_j(i)$. 
\end{proof}

Now we are ready to prove \cref{thm:config_char}.

\begin{proof}[Proof of \cref{thm:config_char}]

First, we show that if a configuration $C$ is toppleable, then \eqref{condition} holds. Let $C\in \mathcal{S}(n,p)$.  
Suppose  that for an $i$ with $1\leq  i\leq p$ the condition \eqref{condition} is not fulfilled, $C^{-1}(i)> p+i-1$. (The left hand side, $p+i-n-1\leq C^{-1}(i)$, holds again trivially for $i$ with $1\leq  i\leq p$).
As noted in \cref{rem:passes}(3), a chip to the right of $p$ (in a set $T_j$) moves during a pass arbitrary sites to the right, but at most one to the left. Hence, if $C^{-1}(i)> p+i-1$, the chip $i$ is after $i-1$ passes still strictly to the right of the site $p$. But, since in each pass one chip is fixed on the rightmost and leftmost active site, $\pi_i$ is fixed after the $(i-1)$'th pass. Hence, the chip that is fixed after the $(i-1)$'th pass cannot be the chip $i$. The case for $p<  i\leq  n+1$ follows by symmetry.  

We show now that if the condition \eqref{condition} holds for all $i$, the configuration is toppleable. 
Note that the toppling process stops after $\min(p,n-p+1)$ passes. Assume now that $n-p<p$; the other case follows by symmtery.
By \cref{lem:j_pass_char} we know that after the $(n-p+1)$'th pass, the chips $1,2,\ldots, n-p+1$, and $p,p+1,\ldots, n+1$ will be in their correct order. However, the question remains as to what happens with the chips $n-p+2,\ldots, p-1$. We show next that these chips topple also in the correct order after the $(n-p+1)$'th pass.

Let $i=n-p+t$, where $2\leq t\leq 2p-n-1$. The condition of the theorem has the form
\begin{align*}
    	t-1\leq C^{-1}(n-p+t)\leq n+t-1.
\end{align*}
     Since the right hand side is redundant, we have actually $t-1\leq C^{-1}(n-p+t)\leq n+1$. After $n-p$ passes, the restriction modifies according  by \cref{lem:j_pass_char} and \cref{lem:j_pass_char_2} for all $t$ to
     \begin{align*}
     	n-p+t-1\leq C^{-1}_{n-p}(n-p+t)\leq p.
     \end{align*}  
 
As in the proofs of the previous lemmas, we again use an inductive argument. 
Consider the base case $t=2p-n-1$, i.e., before the last pass the chip $p-1$ belongs to one of the sites in $\{p-2,p-1,p\}$. In the last pass it topples at least once by \cref{rem:passes}(2). If it is on site $p-2$ it has to topple to the right, since there are no greater elements that could force it to topple to the left (recall that the elements $p,\ldots, n+1$ are fixed or topple now to their correct places). 

If it is on the $(p-1)$'th or $p$'th site, one can see that it has to topple to its correct position after a little bit of thought. Essentially, after the $(p-1)$'th site there is simply
no other place where it could land, because the $p$'th and $(p+1)$'th sites will be necessarily occupied by the chips $p$ and $p+1$.

Finally, induction on $t$ completes the proof: assume that for $t,t+1,\ldots, 2p-n-1 $ the chips $n-p+t$ will land at the correct site after the $(n-p+1)$'th pass, so the sites $n-p+t,\ldots, p, p+1,\ldots, n+1$ will be occupied with the correct chips. Then before the last pass the chip $n-p+t-1$ was restricted between the sites $n-p+t-2$ and $p$. So there is no greater chip that could cause it to topple to the left from the $(n-p+t-2)$'th site. In addition, there is no ``free'' place where it could land beside its correct site. Thus, all the chips between $n-p+1$ and $p$ have to topple to their correct sites finally in the last pass, and this proves the result. 
\end{proof}

As explained in \cref{sec:model}, we can associate to each $p$-toppleable configuration $C=(c_1,\ldots, (c_{p}^1,c_{p}^2),\ldots, c_{n}) \in \mathcal{S}(n,p)$ two toppleable permutations; a $(c_p^1,p)$-toppleable and a $(c_p^2,p)$-toppleable one. 
Let us distinguish one chip labeled by $r$ on site $p$ in the configuration $C$ and call such configurations \emph{marked configurations}. Clearly, there is a one-to-one correspondence between the set of toppleable marked configurations on $[n+1]$ chips and $(r,p)$-toppleable permutations of $[n]$. 
To make this precise, we define a map $w$ that associates to each marked configuration $C_p^r=(c_1,\ldots, (c_p,{\bf r}), \ldots, c_n)$ on $n+1$ chips a permutation $w(C_p^r) = \sigma^*$ of  $[n+1]$ as follows: 
\[
\sigma^*_i= 
\begin{cases}
c_i & 1 \leq i \leq p, \\
r & i = p+1, \\
c_{i-1} & p+2 \leq i \leq n.
\end{cases}
\]
Using \cref{thm:config_char} and the map $w$ we can characterize $(r,p)$-toppleable permutations. Recall the definition of Vesztergombi permutations from  \cref{def:veszt} and the definition of Callan permutations from \cref{def:callan}.

\begin{proposition}
\label{prop: perm_char}
The set of $(r,p)$-toppleable permutations of $[n]$ is in one-to-one correspondence with $(n-p+1,p)$-Vesztergombi permutations $\sigma$
such that $\sigma^{-1}_{p+1} = r$ and with $(n-p+1,p)$-Callan permutations starting with the element $r$.
\end{proposition}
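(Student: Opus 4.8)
The plan is to exhibit the two stated correspondences as a short composition of bijections, with the only real work being to recast \cref{thm:config_char} as a Vesztergombi condition. We are already given that $(r,p)$-toppleable permutations of $[n]$ correspond bijectively to toppleable marked configurations $C_p^r$, and that $w$ sends such a marked configuration to $\sigma^{*}=w(C_p^r)=(c_1,\dots,c_p,r,c_{p+1},\dots,c_n)\in S_{n+1}$. Since $w$ is clearly invertible (recover $c_i=\sigma^{*}_i$ for $i\le p$, $r=\sigma^{*}_{p+1}$, and $c_i=\sigma^{*}_{i+1}$ for $i>p$), it is enough to single out which $\sigma^{*}$ arise from toppleable configurations and then pass to $\sigma=(\sigma^{*})^{-1}$.

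First I would record how a chip's site is read off from $\sigma^{*}$. If $v=\sigma^{*}_i$, then in the underlying configuration $C^{-1}(v)=i$ when $i\le p$ and $C^{-1}(v)=i-1$ when $i\ge p+1$, the unit shift being exactly the effect of inserting $r$ at position $p+1$ (in particular $C^{-1}(r)=p$). Substituting this into the characterization $p+v-n-1\le C^{-1}(v)\le p+v-1$ of \cref{thm:config_char} and solving for $\sigma^{*}_i=v$ yields, position by position, the inequalities $i-p+1\le \sigma^{*}_i\le i-p+n+1$ for $i\le p$ and $i-p\le\sigma^{*}_i\le i-p+n$ for $i\ge p+1$.

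The heart of the argument, and the step I expect to require the most care, is to check that this per-position system is \emph{exactly} the single Vesztergombi condition $-p\le\sigma^{*}_i-i\le n-p+1$, i.e.\ that $\sigma^{*}$ is a $(p,n-p+1)$-Vesztergombi permutation. The two systems differ only by an off-by-one at one end of each range, but in every case the discrepant bound is automatic from $1\le\sigma^{*}_i\le n+1$: for $i\le p$ one has $i-p+1\le 1\le\sigma^{*}_i$, so both lower bounds are vacuous and only the common upper bound $\sigma^{*}_i\le i-p+n+1$ is binding, while for $i\ge p+1$ one has $\sigma^{*}_i\le n+1\le i-p+n$, so both upper bounds are vacuous and only the common lower bound $\sigma^{*}_i\ge i-p$ is binding. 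Thus $C$ is toppleable if and only if $\sigma^{*}$ is $(p,n-p+1)$-Vesztergombi. Inverting swaps the two parameters, so $\sigma=(\sigma^{*})^{-1}$ is $(n-p+1,p)$-Vesztergombi, and $\sigma^{-1}_{p+1}=\sigma^{*}_{p+1}=r$; this is the first asserted bijection.

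Finally, for the Callan description I would transport the Vesztergombi side through the bijection of \cref{prop:bij-callan}, which pairs $(n-p+1,p)$-Callan permutations $\pi$ with $(n-p+1,p)$-Vesztergombi permutations $\sigma$ under the relation $\pi_1=\sigma^{-1}_{p+1}$ (here $p+1$ is one more than the second parameter $p$). Hence the constraint $\sigma^{-1}_{p+1}=r$ becomes precisely $\pi_1=r$, so the matching Callan permutations are exactly those starting with $r$, completing the chain of bijections. Apart from this appeal to \cref{prop:bij-callan}, the whole proof is the bookkeeping of the second and third paragraphs: tracking the unit shift created by $w$ and verifying that the superfluous halves of the Vesztergombi inequalities are automatic.
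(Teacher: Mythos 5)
Your proof is correct and follows essentially the same route as the paper: pass through the map $w$ to a permutation $\sigma^*\in S_{n+1}$, translate \cref{thm:config_char} into the $(p,n-p+1)$-Vesztergombi condition on $\sigma^*$, invert to get a $(n-p+1,p)$-Vesztergombi permutation with $\sigma^{-1}_{p+1}=r$, and transport through \cref{prop:bij-callan} for the Callan statement. The only difference is one of care: where the paper disposes of the position shift with the phrase ``shifting the right side inequality by 1,'' you verify explicitly that the off-by-one discrepancies between the per-position system and the uniform Vesztergombi bounds are vacuous given $1\le\sigma^*_i\le n+1$, which is a worthwhile clarification of the same argument.
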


\begin{proof}
As we noted above, to each $(r,p)$-toppleable permutation one can associate a configuration $C_p^r \in \mathcal{S}(n,p)$ that topples to the identity. 
Using the map $w$, we obtain a permutation $\sigma^*$ of $[n+1]$ with $\sigma^*_{p+1}=r$.
The restriction on the positions of chips in \cref{thm:config_char} translates to a similar restriction on positions in $\sigma^*$ by shifting the right side inequality by 1, and we obtain 
$p-n-1\leq(\sigma^*)^{-1}_i-i\leq p$. 
Note that this is the same restriction as in the definition of $(p,n-p+1)$-Vesztergombi permutation. 
Thus $\sigma^*$ is a $(p,n-p+1)$-Vesztergombi permutation with $\sigma^*_{p+1}=r$. Thus, the inverse of $\sigma^*$, that we denote by $\sigma$, is a $(n-p+1,p)$-Vesztergombi permutation\footnote{One can easily see that 
the inverse of a $(k,m)$-Vesztergombi permutation is an $(m,k)$-Vesztergombi permutation.}  
with $\sigma^{-1}_{p+1}=r$, proving the first part.  
	
The second part follows from the bijection in \cref{prop:bij-callan}.
\end{proof}

We are now in a position to prove the first main result of this section.

\begin{proof}[Proof of \cref{thm: top-pB}]
The theorem follows from the characterization in \cref{prop: perm_char}, the bijection in \cref{prop:bij-callan} and from the result in \cref{thm:num-veszt} that Vesztergombi permutations are enumerated by the poly-Bernoulli numbers. 
\end{proof}

We now move towards proving \cref{thm:r-p-top-perm} using combinatorial arguments.
We first list the number of $(r,p)$-toppleable permutations for all values of $r$ in \cref{tab:n4 and 5rp}.

\begin{center}
\begin{table}[h!]
	\begin{tabular}{|c||c|c|c|c|c|c|}
		\hline
		$p \backslash r$& 1&2&3&4&5&6\\
		\hline
		1&16&8&4&2&1&1\\\hline
		2&46&32&22&15&15&16\\\hline
		3&46&38&31&31&38&46\\\hline
		4&16&15&15&22&32&46\\\hline
		5&1&1&2&4&8&16\\\hline
	\end{tabular}
\hfil 
	\begin{tabular}{|c||c|c|c|c|c|}
		\hline
		$p \backslash r$& 1&2&3&4&5\\
		\hline
		1 & 8 & 4 & 2 & 1 & 1 \\\hline
		2 & 14 & 10 & 7 & 7 & 8 \\\hline
		3 & 8 & 7 & 7 & 10 & 14 \\\hline
		4 & 1 & 1 & 2 & 4 & 8 \\\hline
	\end{tabular}

\caption{The number of $(r,p)$-toppleable permutations for $n=5$ on the left and $n=4$ on the right.}
\label{tab:n4 and 5rp}
\end{table}  
\end{center}

Poly-Bernoulli numbers of type B arise in the enumeration of toppleable permutations also as the number of $(1,p)$-toppleable permutations.

\begin{proposition}
\label{prop:1p_top_perm}
  
	We have $|\mathcal{T}_n^{(1,n)}| = 1$ and
	\begin{align*}|\mathcal{T}_n^{(1,p)}| = \sum_{r=1}^{n} |\mathcal{T}_{n-1}^{(r,p)}|, \quad 1 \leq p \leq n-1.
	\end{align*}

\end{proposition}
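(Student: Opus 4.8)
The plan is to translate the entire statement into the language of Callan permutations by means of \cref{prop: perm_char}, which identifies the set of $(r,p)$-toppleable permutations of $[N]$ with the set of $(N-p+1,p)$-Callan permutations whose first letter is the element $r$. Under this dictionary both sides of the proposition become purely combinatorial assertions about Callan permutations, and I would establish them by an explicit shift-and-prepend bijection. Crucially, this argument uses only \cref{prop: perm_char} and \cref{prop:bij-callan}, not \cref{thm:r-p-top-perm}, since the present proposition is a stepping stone toward that theorem.

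First I would dispose of the base case $|\mathcal{T}_n^{(1,n)}| = 1$. Taking $p = n$ in \cref{prop: perm_char}, these permutations correspond to $(1,n)$-Callan permutations starting with the unique underlined element $\piros{1}$. Since there is only one underlined element, the block structure of \cref{def:callan} forces such a permutation to consist of the single underlined letter $\piros{1}$ followed by one maximal decreasing overlined block, namely $\piros{1}\,\kek{n+1}\,\kek{n}\cdots\kek{2}$. There is exactly one such permutation, which gives the base case.

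For the recurrence, fix $1 \le p \le n-1$. On the right-hand side, applying \cref{prop: perm_char} with $n$ replaced by $n-1$ identifies $\mathcal{T}_{n-1}^{(r,p)}$ with the $(n-p,p)$-Callan permutations in $S_n$ whose first letter is $r$; since every such Callan permutation begins with a unique letter $r \in [n]$, these sets partition the collection of all $(n-p,p)$-Callan permutations, so summing over $r = 1, \dots, n$ counts all of them. On the left-hand side, $\mathcal{T}_n^{(1,p)}$ is identified with the $(n-p+1,p)$-Callan permutations starting with $\piros{1}$. Thus the recurrence reduces to the claim that deleting the leading $\piros{1}$ is a bijection from $(n-p+1,p)$-Callan permutations beginning with $\piros{1}$ onto all $(n-p,p)$-Callan permutations.

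I would make the bijection precise as follows: given a $(n-p+1,p)$-Callan permutation beginning with $\piros{1}$, remove that first letter and subtract $1$ from every remaining label. This sends the underlined set $\{1,\dots,n-p+1\}$ onto $\{1,\dots,n-p\}$ and the overlined set $\{n-p+2,\dots,n+1\}$ onto $\{n-p+1,\dots,n\}$, which are exactly the label sets of a $(n-p,p)$-Callan permutation; the inverse map increments every label by $1$ and prepends $\piros{1}$. The only point requiring care—and the step I expect to be the main, albeit modest, obstacle—is checking that these operations preserve the alternating increasing-underlined/decreasing-overlined block structure of \cref{def:callan}. In particular one must handle the edge case in which the initial underlined block is exactly $\{\piros{1}\}$, so that after deletion the image begins with an overlined block, and dually verify that prepending $\piros{1}$ either extends an existing initial underlined block or creates a new one, in neither case disturbing monotonicity. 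Once this is verified, the map is a bijection, so the two sides of the recurrence are equal—both enumerating $B_{n-p,p}$ by \cref{prop:bij-callan}—and the proof is complete.
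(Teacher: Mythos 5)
Your argument is correct and non-circular, but it takes a genuinely different route from the paper's own proof of \cref{prop:1p_top_perm}. The paper argues directly on the toppling dynamics: writing the initial configuration as chip $1$ sharing site $p$ with some chip $a\in\{2,\dots,n+1\}$, it observes that after the first \emph{left pass} chip $1$ is frozen at site $0$ while $a$ has moved to site $p+1$; deleting the frozen chip and reducing all remaining labels by one identifies the rest of the process with the toppling of an $(a-1,p)$-toppleable permutation in $S_{n-1}$, and summing over $a$ gives the recurrence. You instead transport the whole statement through \cref{prop: perm_char} and prove it by a delete-the-leading-$1$-and-shift bijection on Callan permutations; your base case and your attention to the block-structure edge cases (an initial underlined block equal to $\{1\}$, and prepending $1$ to a permutation that starts with an overlined element) are exactly the points that need checking, and they go through. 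The trade-off is clear: the paper's proof is self-contained at the level of the chip dynamics and never invokes the Vesztergombi--Callan machinery of \cref{prop:bij-callan}, whereas your proof leans on that correspondence but generalizes with no extra effort to arbitrary $r$ --- it is precisely the $r=1$ specialization of the Callan-deletion arguments the paper itself uses later to prove \cref{thm: general-rec} and \cref{thm:r-p-top-perm}, so in your development the proposition becomes a special case of the general recurrence rather than an independent dynamical fact. As a small bonus, your argument directly evaluates both sides of the identity as $B_{n-p,p}$, which the paper only extracts in the corollary following the proposition.
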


\begin{proof}
It is easy to see that if $p = n$ and $r =1$, there is a single permutation which topples to the identity. Now, suppose $p < n$.
	Let $\pi^{(1,p)}\in \mathcal{T}_n^{1,p}$. Then $1$ is at site $p$ together with another chip. Let $a$ be this chip, $2\leq a \leq n+1$. We define the \emph{left pass} as the consecutive topplings in a pass on the left hand side of $p$. Using our previous notation for unlabeled configurations, we have

\begin{align*}
(\_,1,\ldots, 1,2,\hat{\_},2,1,\ldots, 1,\_)
\rightarrow
(1,\_,1,\ldots,1, 1,1,1,\hat{1},2,1,\ldots, 1,\_),
\end{align*}
after a left pass.
After the first left pass the chip $1$ is frozen on the leftmost site, and the chip $a$ moves one site to the right, so to the $(p+1)$'th site. Thus, we have a configuration where the first site (zeroth) is occupied by the chip $1$, the next site (first) is empty, and there are two chips at the $(p+1)$'th site (of which one is the chip $a$).  Ignoring the site $0$ with the chip $1$, and reducing all other chip labels by one, we obtain a configuration on $n$ chips with two chips at the $p$'th site (note that by ignoring the site $0$ the sites are shifted to the left). If our initial configuration was $(1,p)$-toppleable, this configuration has to be $(a-1,p)$-toppleable. Conversely, given a $(a-1,p)$-toppleable permutation in $S_{n-1}$, we can obtain a $(a,p)$-toppleable permutation in $S_n$ by reversing the process above.
Since $1\leq a-1\leq n$, the sum goes from $1$ to $n$. 
\end{proof}

We can now count the number of $(1,p)$-toppleable permutations.

\begin{corollary}
	The number of $(1,p)$-toppleable permutations in $S_n$ is  $B_{n-p,p}$.
\end{corollary}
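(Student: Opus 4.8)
The plan is to combine the recurrence from \cref{prop:1p_top_perm} with the formula for general $(r,p)$-toppleable permutations and then recognize the resulting sum as a known identity for $B_{n-p,p}$. The corollary claims $|\mathcal{T}_n^{(1,p)}| = B_{n-p,p}$, and we have two natural routes to it.

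The \emph{first and most direct} route is to feed \cref{thm:r-p-top-perm} into the recurrence of \cref{prop:1p_top_perm}. Indeed, \cref{thm:r-p-top-perm} gives $|\mathcal{T}_{n-1}^{(r,p)}| = \Delta^{r-1}\big(B_{n-p-r,p}\big)$ (replacing $n$ by $n-1$), so the recurrence yields
\begin{align*}
|\mathcal{T}_n^{(1,p)}| = \sum_{r=1}^{n} \Delta^{r-1}\big(B_{n-p-r,p}\big).
\end{align*}
Now I would observe that $\Delta^{r-1}$ is an iterated forward difference acting on the first index, and this is a telescoping sum: writing $g(m) = B_{m,p}$, one has $\Delta^{r-1}(g(m))$ evaluated at the appropriate shifted argument, and summing $\sum_{r\geq 1}\Delta^{r-1}$ collapses. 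The cleanest way is to use the fact noted in the excerpt that $\Delta^{m} g(n)$ is $(-1)^m$ times a term of the binomial transform of $g$; summing the resulting binomial-transform terms over $r$ should leave exactly $B_{n-p,p}$ after the alternating signs cancel. Care is needed with the index bookkeeping, since the argument $n-p-r$ decreases as $r$ grows, so the difference operator and the evaluation point shift together.

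A \emph{cleaner alternative}, which I expect to be more robust, is to avoid \cref{thm:r-p-top-perm} and instead prove the claim by induction on $n$ directly from \cref{prop:1p_top_perm} together with the poly-Bernoulli recurrence \eqref{bnk-recur}. Here I would set up the induction so that the inductive hypothesis supplies $|\mathcal{T}_{n-1}^{(r,p)}|$ for all $r$, and then show that $\sum_{r=1}^{n}|\mathcal{T}_{n-1}^{(r,p)}| = B_{n-p,p}$ matches the right-hand side of \eqref{bnk-recur} after reindexing. One would want to recognize the binomial coefficients $\binom{n}{m}$ in \eqref{bnk-recur} as arising from the left-pass freezing described in the proof of \cref{prop:1p_top_perm}, which is essentially a ``remove the frozen chip and relabel'' operation. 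The base case $|\mathcal{T}_n^{(1,n)}| = 1 = B_{0,p}$ is immediate since $B_{0,k} = 1$ for all $k$ (see \cref{tab:data-polybern}(a)).

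The main obstacle in either approach is the \textbf{index arithmetic in the telescoping/summation step}: the forward difference $\Delta^{r-1}$ acts on the first argument while the argument itself depends on $r$, so one must be careful that the shifts align to produce a genuine telescope rather than a messy residual. I expect the identity $\sum_{r=1}^{n}\Delta^{r-1}\big(B_{n-p-r,p}\big) = B_{n-p,p}$ to follow from a standard summation-by-parts or telescoping identity for forward differences, but verifying the boundary terms vanish (i.e.\ that the values of $B_{m,p}$ at out-of-range indices contribute nothing) will require checking the convention $B_{0,p}=1$ and the vanishing at the far end of the sum. Once that collapse is established, the corollary is immediate.
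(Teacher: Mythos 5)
Both of your routes contain genuine gaps, and both miss the short counting argument the paper actually uses.

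Your first route is circular relative to the paper's development: the paper proves \cref{thm:r-p-top-perm} by induction on $n$, and the base of that induction is precisely this corollary (the $r=1$ case, $f_{n+1-p,p}(1)=B_{n-p,p}$), so \cref{thm:r-p-top-perm} cannot be fed back into \cref{prop:1p_top_perm} to derive the corollary. Worse, even granting \cref{thm:r-p-top-perm} independently, it only applies for $1\leq r\leq n-p+1$, whereas the sum in \cref{prop:1p_top_perm} runs over all $1\leq r\leq n$. Applied to $\mathcal{T}_{n-1}^{(r,p)}$ the formula reads $\Delta^{r-1}\bigl(B_{n-p-r,p}\bigr)$ and is valid only for $r\leq n-p$; for larger $r$ this expression involves poly-Bernoulli numbers with negative first index, which are undefined, so your claimed telescope $\sum_{r=1}^{n}\Delta^{r-1}\bigl(B_{n-p-r,p}\bigr)=B_{n-p,p}$ is not even well-posed, and it is numerically false on the defined range: for $n=5$, $p=2$ one gets $14+10+7=31$, while the missing terms $|\mathcal{T}_4^{(4,2)}|+|\mathcal{T}_4^{(5,2)}|=7+8=15$ (which must be computed via the symmetry \cref{prop:symm}, not the formula) are needed to reach $B_{3,2}=46$. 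Your second route has a different gap: the inductive hypothesis --- the corollary at $n-1$ --- supplies only $|\mathcal{T}_{n-1}^{(1,p)}|$, yet evaluating $\sum_{r=1}^{n}|\mathcal{T}_{n-1}^{(r,p)}|$ requires all the values $|\mathcal{T}_{n-1}^{(r,p)}|$, $1\leq r\leq n$. Your assertion that ``the inductive hypothesis supplies $|\mathcal{T}_{n-1}^{(r,p)}|$ for all $r$'' is exactly what fails, so the induction does not close; strengthening the hypothesis to a formula for all $r$ is tantamount to proving \cref{thm:r-p-top-perm} first, returning you to the problems above.

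The paper's proof is instead a two-line double count, and this is the idea you are missing. The sum $\sum_{r=1}^{n}|\mathcal{T}_{n-1}^{(r,p)}|$ counts pairs $(\sigma,r)$ with $\sigma\in S_{n-1}$ being $(r,p)$-toppleable, i.e.\ marked toppleable configurations in $\mathcal{S}(n-1,p)$. Every $p$-toppleable configuration in $\mathcal{S}(n-1,p)$ has exactly two chips at site $p$, either of which may be taken as the marked chip $r$, so each such configuration is counted exactly twice. Hence the sum equals twice the number of $p$-toppleable configurations in $\mathcal{S}(n-1,p)$, which by \cref{thm: top-pB} is $2\cdot B_{n-p,p}/2 = B_{n-p,p}$. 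No formula for individual $|\mathcal{T}_{n-1}^{(r,p)}|$ and no difference-operator identity is needed; the $2$-to-$1$ correspondence between $(\sigma,r)$ pairs and configurations does all the work.
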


\begin{proof}
By \cref{prop:1p_top_perm}, we have to sum over the number of $(r,p)$-toppleable permutations in $S_{n-1}$ for all possible $r$. But this is the same as twice the number of $p$-toppleable configurations in $\mathcal{S}(n-1,p)$ because every such configuration is represented twice. \cref{thm:p-resultant-configs} then proves the result.
\end{proof}

As an illustration of this result, compare the first column of the tables in \cref{tab:n4 and 5rp} with the entries in \cref{tab:data-polybern}(a).
 The characterisation in \cref{prop: perm_char} gives us the class of Vesztergombi permutations and the class of Callan permutations that corresponds to $(r,p)$-toppleable permutations for a given $r$.
 
Using these correspondences we can generalize \cref{prop:1p_top_perm} and give a similar recursion for arbitrary $r$.

\begin{theorem}
\label{thm: general-rec}

\[
|\mathcal{T}_n^{(r,p)}| =
\begin{cases}
\ds \sum_{i=r}^{n} 	|\mathcal{T}_{n-1}^{(i,p)}| & r\leq n-p+1, \\
\ds \sum_{i=1}^{r-1} |\mathcal{T}_{n-1}^{(i,p-1)}| & r > n-p+1.
\end{cases}
\]
\end{theorem}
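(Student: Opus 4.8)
The plan is to prove the recursion by translating everything into Callan permutations via \cref{prop: perm_char} and then analyzing the effect of deleting the first letter. Recall that \cref{prop: perm_char} identifies $\mathcal{T}_n^{(r,p)}$ with the set of $(n-p+1,p)$-Callan permutations in $S_{n+1}$ that begin with the element $r$; here the underlined elements are $\{1,\dots,n-p+1\}$ (occurring in increasing blocks) and the overlined elements are $\{n-p+2,\dots,n+1\}$ (occurring in decreasing blocks). Likewise $\mathcal{T}_{n-1}^{(i,p)}$ corresponds to $(n-p,p)$-Callan permutations in $S_n$ beginning with $i$, and $\mathcal{T}_{n-1}^{(i,p-1)}$ corresponds to $(n-p+1,p-1)$-Callan permutations in $S_n$ beginning with $i$. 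Thus it suffices to exhibit, in each of the two cases, a bijection between Callan permutations in $S_{n+1}$ starting with $r$ and a disjoint union of smaller Callan classes indexed by the allowed starting letters.

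First I would treat the case $r \leq n-p+1$, where $r$ is an underlined letter and hence the first (smallest) entry of the opening increasing block. The map I would use deletes this leading $r$ and relabels the remaining word by decreasing every letter exceeding $r$ by $1$. Deleting the smallest entry of an increasing block leaves it increasing, so the Callan property is preserved; moreover the relabeling turns the underlined alphabet $\{1,\dots,n-p+1\}$ into $\{1,\dots,n-p\}$ and the overlined alphabet into $\{n-p+1,\dots,n\}$, producing exactly a $(n-p,p)$-Callan permutation of $S_n$. The key point is to identify the new first letter: it is the image of the old second letter, which after relabeling ranges over all of $\{r,r+1,\dots,n\}$ as the original word varies (an underlined second letter $>r$ becomes an underlined letter in $\{r,\dots,n-p\}$, while an overlined second letter becomes an overlined letter in $\{n-p+1,\dots,n\}$). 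Prepending $r$ and adding $1$ back to all letters $\geq r$ inverts the construction---one checks that prepending the underlined letter $r$ to a word starting with any $i\geq r$ again yields a valid Callan permutation---so the map is a bijection, and summing over the achievable starting letters $i=r,\dots,n$ gives the first line of the recursion.

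The second case $r > n-p+1$ is entirely parallel, with underlined and overlined roles interchanged. Now $r$ is overlined and is the first (largest) entry of the opening decreasing block; I would delete it and decrease every letter exceeding $r$ by $1$. Deleting the largest entry of a decreasing block keeps it decreasing, and the relabeling now shrinks the overlined alphabet $\{n-p+2,\dots,n+1\}$ to $\{n-p+2,\dots,n\}$ while fixing the underlined alphabet $\{1,\dots,n-p+1\}$, yielding a $(n-p+1,p-1)$-Callan permutation of $S_n$. Here the new first letter, the image of the old second letter, ranges over all of $\{1,\dots,r-1\}$ (an overlined second letter is some overlined value less than $r$, an underlined second letter is any value in $\{1,\dots,n-p+1\}$, and these together exhaust $\{1,\dots,r-1\}$). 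The inverse prepends $r$ and adds $1$ back to all letters $\geq r$, and one verifies it lands in the correct Callan class; summing over $i=1,\dots,r-1$ gives the second line.

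The main obstacle, and the part requiring care, is the bookkeeping of the relabeling together with the verification that insertion and deletion of the leading letter respect the alternating block structure---in particular pinning down the exact range of the new starting letter in each case, since this is what fixes the summation limits. As a consistency check, specializing the first case to $r=1$ recovers \cref{prop:1p_top_perm}, and the symmetric form of the two cases reflects the symmetry of \cref{prop:symm}.
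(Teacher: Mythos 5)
Your proposal is correct and follows essentially the same route as the paper: both invoke \cref{prop: perm_char} to identify $\mathcal{T}_n^{(r,p)}$ with $(n-p+1,p)$-Callan permutations starting with $r$, delete the leading letter, relabel by reducing entries greater than $r$, and observe that the new first letter ranges over $\{r,\dots,n\}$ (resp.\ $\{1,\dots,r-1\}$) in the two cases, which fixes the summation limits. The only difference is that you spell out the inverse map and the block-structure checks in more detail than the paper does, which is a welcome addition but not a different argument.
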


\begin{proof}
We know from \cref{prop: perm_char} that $|\mathcal{T}_n^{r,p}|$ is the number of $(n-p+1,p)$-Callan permutations starting with $r$. We need to consider two cases separately: $r\leq n-p+1$ and $r>n-p+1$. 
	If $r\leq n-p+1$, i.e.,  $r$ is an underlined element and by definition this means that $r$ is followed by a greater underlined element or an overlined element. If we delete $r$ and reduce the value of each element greater than $r$ by one, we obtain a Callan permutation with $n-p$ underlined elements and $p$ overlined elements. The number of such permutations is the sum of $|\mathcal{T}_{n-1}^{(i,p)}| $ where $i$ goes from $r$ to $n$. Similarly, if $r>n-p+1$, which means that $r$ is an overlined element, then $r$ is followed by a smaller overlined element or an underlined element. After deleting $r$ and reducing the remaining elements greater than $r$ by one, we obtain a Callan permutation with $n-p+1$ underlined and $p-1$ overlined elements and a starting element smaller than $r$. The number of such permutations is the sum $|\mathcal{T}_{n-1}^{(i,p-1)}|$ where $i$ goes from $1$ to $r-1$.
\end{proof}

We are now in a position to enumerate $(r,p)$-toppleable permutations.

\begin{proof}[Proof of \cref{thm:r-p-top-perm}]
By \cref{prop: perm_char}, it suffices to look at $(n-p+1,p)$-Callan permutations starting with $r$. 
The set of $(n-p+1,p)$-Callan permutations is enumerated by 
$B_{n-p+1,p}$. As we showed in \cref{prop:1p_top_perm}, 
the number of such permutations for $r=1$ is $B_{n-p,p}$, which is the same as $\Delta^{0} \big( B_{n-p,p} \big)$. Throughout, 
$\Delta$ acts on the first index.

For $r=2$, we have to consider Callan permutations starting with the underlined element $r=2$. Delete the starting element $2$. What we get is a permutation on the set of underlined elements $\{1,3,4,\ldots,n-p+1\}$ and overlined elements $\{n-p+2,\ldots, n+1\}$ starting with an underlined element greater than $2$, or with an overlined element. If we reduce the elements greater than $2$ by one, we get a permutation with $n-p$ underlined elements and $p$ overlined elements \textit{not} starting with $1$. Thus, we see that the number of Callan permutations of $n+1$ elements starting with $2$ is the difference of the number of Callan permutations of $n$ elements and Callan permutations of $n$ elements starting with $1$, namely
 $ B_{n-p,p}-B_{n-p-1,p}$, which is $\Delta^1 \big( B_{n-p-1,p} \big)$.

In general we can argue the same way. Let $r\leq n-p+1$ as in the condition of the theorem which means that $r$ is an underlined element in the corresponding Callan permutation. Now, given a Callan permutation starting with the underlined element $r$, if we delete the starting element, we obtain Callan permutation of the underlined elements $\{1,2,\ldots, r-1,r+1,\ldots, n-p+1\}$ and overlined elements $\{n-p+2,\ldots, n+1\}$ starting with an underlined element that is greater than $r$ or with an overlined element. If we reduce the elements greater than $r$ by one, we obtain a Callan permutation of underlined elements $\{1,\ldots, n-p\}$ and overlined elements $\{n-p+1,\ldots, n\}$ \textit{not} starting with any of the following elements: $1$, $2$, \ldots, $r-1$. Denoting by $f_{n-p,p}(r)$ the number of $(n-p,p)$-Callan permutations starting with $r$ we have shown that
\begin{align*}
	f_{n+1-p,p}(r) = B_{n-p,p} - f_{n-p,p}(1) - \cdots -f_{n-p,p}(r-1).
\end{align*}

The remainder of the proof follows by induction on $n$. The result is easily verified for small values of $n$ by explicit computation. According to the induction hypothesis 
\[
f_{n-p,p}(k) = \Delta^{k-1}(B_{n-p-k,p}) 
= \sum_{j=0}^{k-1} (-1)^j \binom{k-1}{j} B_{n-p-j-1,p}.
\]
By the above equation,
\[
f_{n+1-p,p}(r) = B_{n-p,p} - \sum_{k=1}^{r-1} f_{n-p,p}(k),
\]
and plugging in the induction assumption, we obtain
\begin{align*}
f_{n+1-p,p}(r) =& B_{n-p,p} - \sum_{k=1}^{r-1} \sum_{j=0}^{k-1} (-1)^j \binom{k-1}{j} B_{n-p-j-1,p} \\
=& B_{n-p,p} - \sum_{j=0}^{r-2} (-1)^j B_{n-p-j-1,p} \sum_{k=j+1}^{r-1} \binom{k-1}{j}.
\end{align*}
The inner sum on $k$ now gives us $\binom{r-1}{j+1}$ and we end up with
\[
f_{n+1-p,p}(r) = B_{n-p,p} - \sum_{j=0}^{r-2} (-1)^j \binom{r-1}{j+1} B_{n-p-j-1,p},
\]
and the right hand side is exactly $\Delta^{r-1}(B_{n-p+1-r,p})$, completing the proof.
\end{proof}

We now recover the result of \cite{AHT}.

\begin{proof}[Proof of \cref{cor:aht-theorem}]
	The corollary follows from the fact that the two types of poly-Bernoulli numbers are inversion transforms of each other \eqref{relationB_C} and by \cref{thm:r-p-top-perm}. However, it is easy to prove the formula directly using the bijection with Callan permutations.
	
	By \cref{prop: perm_char}, $(p,p)$-toppleable permutations are in one-to-one correspondence with ($p$, $n+1-p$)-Callan permutations starting with the greatest underlined element $r=p$.
	(Ignoring the first element, we obtain a Callan permutation starting with an overlined element, since subsequences of underlined elements are increasing.) 
	By \cref{thm:polyC-objects}(4), we obtain the result.
	
	If $r=p+1$, the corresponding Callan permutations start with the smallest overlined element. So, ignoring this first element, we obtain a Callan permutation that start with an underlined element, since subsequences of overlined elements are ordered decreasingly. By symmetry between underlined and overlined elements in Callan permutations, the number of Callan permutations starting with an underlined element are also enumerated by the same poly-Bernoulli number. 
\end{proof}

We have another formula for $|\mathcal T_{n}^{(r,p)}|$ using the poly-Bernoulli numbers of type C.

\begin{corollary}
\label{cor:Tn-formula}
Let $n$, $p\leq n$ be integers. We have
\begin{align*}
|\mathcal T_{n}^{(r,p)}|=
\begin{cases}
\ds \sum_{i=0}^{n-p+1-r}\binom{n-p+1-r}{i}C_{p,n-p-i} & r \leq n-p+1, \\[0.5cm]
\ds \sum_{i=0}^{r-n+p-2}\binom{r-n+p-2}{i}C_{n-p+1,p-i-1} & r > n-p+1 .
\end{cases}
\end{align*}
\end{corollary}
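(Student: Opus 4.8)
The plan is to obtain both branches from \cref{thm:r-p-top-perm}, which already expresses $|\mathcal{T}_n^{(r,p)}| = \Delta^{r-1}(B_{n-p+1-r,p})$ for $r \le n-p+1$, by pushing the binomial-transform relation \eqref{relationB_C} through the difference operator. The only genuinely new ingredient is how a forward difference interacts with the binomial coefficients appearing in \eqref{relationB_C}, together with the reflection symmetry to cover the range $r > n-p+1$.

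First I would handle the case $r \le n-p+1$. Using the symmetry $B_{m,p}=B_{p,m}$, I rewrite the quantity of \cref{thm:r-p-top-perm} as $\Delta^{r-1}_m\big(B_{p,m}\big)$ evaluated at $m=n-p+1-r$, where $\Delta$ now differences the argument $m$ (formerly the first index of $B$). By \eqref{relationB_C} we have $B_{p,m}=\sum_{i=0}^{m}\binom{m}{i}C_{p,i}$, a binomial transform of $C_{p,\bullet}$ in the variable $m$. The key observation is Pascal's rule $\Delta_m\binom{m}{i}=\binom{m+1}{i}-\binom{m}{i}=\binom{m}{i-1}$, which iterates to $\Delta^{r-1}_m\binom{m}{i}=\binom{m}{i-r+1}$. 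By linearity, $\Delta^{r-1}_m B_{p,m}=\sum_i\binom{m}{i-r+1}C_{p,i}$; setting $m=n-p+1-r$, substituting $i=\ell+r-1$, and finally reversing the order of summation via $\binom{n-p+1-r}{\ell}=\binom{n-p+1-r}{(n-p+1-r)-\ell}$ produces exactly $\sum_{i=0}^{n-p+1-r}\binom{n-p+1-r}{i}C_{p,n-p-i}$, the first branch.

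For $r>n-p+1$ I would reduce to the first branch using the reflection symmetry of \cref{prop:symm}: reversing the line and complementing chip values carries an $(r,p)$-toppleable permutation to an $(n+2-r,\,n+1-p)$-toppleable one, so $|\mathcal{T}_n^{(r,p)}|=|\mathcal{T}_n^{(n+2-r,\,n+1-p)}|$. Writing $r'=n+2-r$ and $p'=n+1-p$, the hypothesis $r>n-p+1$ gives $r'\le p=n-p'+1$, so the reflected pair lies in the admissible range for the first branch. Applying that branch to $(r',p')$ and simplifying with $n-p'=p-1$ and $n-p'+1-r'=r-n+p-2$ gives $\sum_{i=0}^{r-n+p-2}\binom{r-n+p-2}{i}C_{n-p+1,\,p-1-i}$, the second branch.

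I expect the main obstacle to be bookkeeping rather than conceptual. The difference operator in \cref{thm:r-p-top-perm} acts on the first index of $B$, whereas \eqref{relationB_C} is phrased in the second index, so the symmetry $B_{m,p}=B_{p,m}$ must be invoked at precisely the right moment, and one must track the two indices of $B$ and $C$ carefully while checking that every binomial-coefficient range stays valid through the two re-indexings. A secondary point requiring care is to confirm that the reflection of \cref{prop:symm} really implements the map $(r,p)\mapsto(n+2-r,\,n+1-p)$ at the level of toppleable permutations and that $r'$ lands in $1\le r'\le n-p'+1$, so that the first branch legitimately applies.
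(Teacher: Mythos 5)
Your proof is correct, but it takes a genuinely different route from the paper's. The paper argues directly in the Callan model: by \cref{prop: perm_char}, $|\mathcal{T}_n^{(r,p)}|$ counts $(n-p+1,p)$-Callan permutations starting with $r$; for $r\le n-p+1$ the first (increasing, underlined) block consists of $r$ together with $i$ larger underlined elements chosen in $\binom{n-p+1-r}{i}$ ways, and deleting this block leaves a Callan permutation starting with an overlined element, counted by $C_{p,n-p-i}$ by \cref{thm:polyC-objects}(4); the case $r>n-p+1$ is handled the same way with the roles of underlined and overlined elements exchanged, so the paper never invokes the reflection symmetry. You instead derive the first branch algebraically from \cref{thm:r-p-top-perm} via $B_{m,p}=B_{p,m}$, the binomial transform \eqref{relationB_C}, and Pascal's rule $\Delta_m\binom{m}{i}=\binom{m}{i-1}$; this is sound, the one point worth making explicit being that $\sum_{i=0}^{m}\binom{m}{i}C_{p,i}$ should be read as $\sum_{i\ge 0}\binom{m}{i}C_{p,i}$ (terms with $i>m$ vanish), so that the difference operator may legitimately be applied term by term even though the upper summation limit depends on $m$. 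Your reduction of the second branch to the first via $(r,p)\mapsto(n+2-r,\,n+1-p)$ is also correct: the index arithmetic $n-p'+1-r'=r-n+p-2$ and $C_{p',\,n-p'-i}=C_{n-p+1,\,p-1-i}$ checks out, and the identity $|\mathcal{T}_n^{(r,p)}|=|\mathcal{T}_n^{(n+2-r,\,n+1-p)}|$ is confirmed by the data in \cref{tab:n4 and 5rp}. Note, however, that the literal wording of \cref{prop:symm} (reflected site $n-p$, chip complement $n+1-v$) contains off-by-one slips for a system of $n+1$ chips on sites $1,\dots,n$; the geometrically correct reflection is the one you use ($p\mapsto n+1-p$, $v\mapsto n+2-v$), so the ``secondary point requiring care'' you flag is exactly the right thing to verify. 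In summary: your route makes the corollary a formal consequence of results already proved, with no new combinatorics; the paper's route is self-contained, treats both branches uniformly inside the Callan model, and gives each summand a bijective meaning (the index $i$ records the number of elements in the first block besides $r$).
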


\begin{proof}
	Let $r\leq n-p+1$. By \cref{prop: perm_char}, $|\mathcal T_{n}^{(r,p)}|$ is the number of $(n-p+1,p)$-Callan permutations starting with an underlined element $r$. Hence, the first block is underlined. Denoting by $i$ the number of underlined elements in this first block besides $r$, we can construct this block in $\binom{n-p+1-r}{i}$ ways, since the underlined elements are arranged increasingly. 
Ignoring this first block, we have a Callan permutation with $n-p+i$ underlined and $p$ overlined elements starting with an overlined element. This is known to be enumerated by 
$C_{p,n-p-i}$ by \cref{thm:polyC-objects}(4). 

Similarly, if $r>n-p+1$ the starting block contains overlined elements and in this case we have $\binom{r-(n-p+1)-1}{i}$ possibilities to choose the elements into this block. The remaining, $n-p+1$ underlined and $p-1-i$ overlined elements construct a Callan permutation that starts with an overlined element, which is known to be enumerated by $C_{n-p+1,p-i-1}$. Note that we used in this argument the symmetry property of $C_{n,k}$.
\end{proof}

Another consequence of our bijection is the following relation that can be seen from the data in \cref{tab:n4 and 5rp}.

\begin{corollary}
Let $n$, $p$, and $r$ be integers, such that $p\leq n$ and $r\leq n-p+1$. Then 
\begin{align*}
\sum_{r=1}^{n-p+1} |\mathcal{T}_n^{(r,p)}| = C_{n-p+1,p}.
\end{align*}
Similarly, if $r>n-p+1$, then
\begin{align*}
\sum_{r=n-p+2}^n |\mathcal{T}_n^{(r,p)}| = C_{p,n-p+1}.
\end{align*}
\end{corollary}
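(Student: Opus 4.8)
The plan is to prove both identities as immediate consequences of the Callan-permutation characterization in \cref{prop: perm_char} together with the combinatorial interpretation of $C_{n,k}$ in \cref{thm:polyC-objects}(4). I would first observe that $\sum_{r=1}^{n-p+1} |\mathcal{T}_n^{(r,p)}|$ counts, by \cref{prop: perm_char}, all $(n-p+1,p)$-Callan permutations whose starting element $r$ is an underlined element. Indeed, the condition $r \leq n-p+1$ is exactly the condition that $r$ lies in the underlined alphabet $\{1,\dots,n-p+1\}$, and each such Callan permutation contributes to exactly one term of the sum (the one indexed by its first letter). Hence the first sum is precisely the number of $(n-p+1,p)$-Callan permutations that start with an underlined element, which is $C_{n-p+1,p}$ by \cref{thm:polyC-objects}(4).

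For the second identity, I would argue analogously: by \cref{prop: perm_char}, $\sum_{r=n-p+2}^n |\mathcal{T}_n^{(r,p)}|$ counts all $(n-p+1,p)$-Callan permutations whose first letter $r$ is an overlined element, i.e. $r \in \{n-p+2,\dots,n+1\}$. By the symmetry between underlined and overlined elements in a Callan permutation (exchanging the two alphabets and reversing the roles of ``increasing'' and ``decreasing'' runs swaps an $(a,b)$-Callan permutation for a $(b,a)$-Callan permutation), the number of $(n-p+1,p)$-Callan permutations starting with an overlined element equals the number of $(p,n-p+1)$-Callan permutations starting with an underlined element. Applying \cref{thm:polyC-objects}(4) to the latter count gives $C_{p,n-p+1}$, as required. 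This is essentially the same symmetry invoked at the end of the proof of \cref{cor:aht-theorem}.

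Alternatively, since the two summation ranges partition all possible starting values $r \in \{1,\dots,n+1\}$ of an $(n-p+1,p)$-Callan permutation, I note that adding the two identities recovers the total count $B_{n-p+1,p} = C_{n-p+1,p} + C_{p,n-p+1}$; one may cross-check this against the relation $B_{n,k} = C_{n,k} + C_{n+1,k-1}$ recorded in \cref{sec:poly-bernoulli}, providing a sanity check. The whole argument is short, so there is no serious computational obstacle; the only point requiring care is making the bijection bookkeeping precise, namely that the index $r$ in $|\mathcal{T}_n^{(r,p)}|$ genuinely matches the \emph{first letter} of the corresponding Callan permutation. This is exactly the content of \cref{prop: perm_char}, so once that correspondence is invoked, the disjointness of the summands and the partition of the alphabet into underlined and overlined parts finish the proof.
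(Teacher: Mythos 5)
Your proof is correct and takes essentially the same route as the paper: both identify $\mathcal{T}_n^{(r,p)}$ with the $(n-p+1,p)$-Callan permutations starting with $r$ via \cref{prop: perm_char}, then apply \cref{thm:polyC-objects}(4) together with the underlined/overlined symmetry to obtain $C_{n-p+1,p}$ and $C_{p,n-p+1}$. One small bookkeeping point (present in the paper's statement as well): the second sum as printed stops at $r=n$, while the overlined alphabet runs up to $n+1$; your argument, like the paper's, implicitly sums over all overlined first letters $\{n-p+2,\dots,n+1\}$, which is the intended and correct range (e.g.\ for $n=5$, $p=2$ one needs $15+16=31=C_{2,4}$, requiring the $r=n+1$ term).
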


\begin{proof}
$\sum_{r=1}^{n-p+1} |\mathcal{T}_n^{(r,p)}|$ counts all $(r,p)-$toppleable permutations where $r\leq n-p+1$. According to the bijection this is the same as the number of $(n-p+1,p)-$ Callan permutations starting with an underlined element, which is given by 
$C_{n-p+1,p}$. The second statement is proven analogously.
\end{proof}

\begin{proof}[Proof of \cref{thm:top-perms-fixed-p}]
By \cref{prop: perm_char}, any toppleable configuration $C \in \mathcal{S}(n,p)$ satisfies
\[
p + t - n - 1 \leq C^{-1}_t \leq p + t - 1, \quad 1 \leq t \leq n.
\]
Suppose $\pi \in \mathcal{T}_n^{(r,p)}$ for all $1 \leq r \leq n+1$. Then the configuration $C_r$ obtained by adding $r$ at site $p$ to $\pi$ and shifting values larger than $r$ by $1$ must satisfy these inequalities for each $r$. 
Now, every value $t \in [n]$ appears both as $t$ and as $t+1$ as $r$ varies. Therefore, $\pi$ has to satisfy both
\[
p + t - n - 1 \leq C^{-1}_t \leq p + t - 1 \text{ and }
p + t - n \leq C^{-1}_t \leq p + t,
\]
for each $t$, proving the first statement. The second then follows from \cref{thm:polyC-objects}(2).
\end{proof}

\section{Resultant permutations}
\label{sec:finalconfigs}

We now classify $p$-resultant permutations in $S_n$. 

\begin{proof}[Proof of \cref{thm:p-resultant-perms}]
By \cref{prop:symm}, it suffices to take $p \leq \floor{n/2}$.
We want to show that every $p$-resultant permutation in $S_n$ can be written as a concatenation of a permutation of the elements $1,2,\ldots, n-p$ followed by a permutation of $n-p+1,\ldots, n$. (Note that the initial configuration has $n-1$ sites, so $C\in S(n-1,p)$.)
Let $\pi$ be a $p$-resultant permutation.
We recall some facts from \cref{sec:toppleable}. 
	The first and last sites are frozen after the first pass; hence, the first entry, $\pi_1$ is the smallest element of the set $S_0$. Similarly, the last entry, $\pi_n$ is the greatest element of the set $T_0$. 
	$S_1$ is the set of chips in the left part and on the site $p$ after the first pass. It is possible that one chip from the $p$'th site, say $a$, topples out of the set $S_0$ and another chip, say $b$, topples into it from the right. In this case we have $S_1 = S_0\setminus\{\pi_1,a\}\cup\{b\}$. Alternatively, the chip that topples in the first topple to the right from the $p$'th site may topple back to the left in the next topple.
In this case we have $S_1 = S_0\setminus\{\pi_1\}$. (Note that the fixed entry $\pi_1$ is not contained in the active part of a configuration, and so is not included in $S_1$.) In both cases we have $|S_1|=|S_0|-1$.

In general, $S_i$ is defined as the set of chips in the left part and on the site $p$ after the $i$'th pass.
Note that after the $i$'th pass the chip $\min(S_i)$ is fixed at the $i$'th site and, arguing as above, after each pass the size of the left part is reduced by one, $|S_i|=|S_{i-1}|-1$.
Since the size of $S_0$ is $p+1$, it contains at least one element from the set $V=\{1,2, \ldots, n-p\}$. Hence, the minimum of $S_0$ (which is $\pi_1$) is from the set $V=\{1,2, \ldots, n-p\}$. The size of $S_1$ is $p$, but since the greatest element of $T_0$ is fixed as $\pi_n$ after the first pass, at least one element from the set $V\setminus\{\pi_1\}$ is contained in $S_1$. Hence, the minimum of $S_1$, $\pi_2$ is from the set $V\setminus\{\pi_1\}$.  In general, $S_i$ has to contain at least one element from $V\setminus\{\pi_1, \pi_2,\ldots, \pi_{i-1}\}$. Hence, $\pi_{i+1}=\min(S_{i})$ is from the set  $V\setminus\{\pi_1, \pi_2,\ldots, \pi_{i-1}\}$.

We know that there are $p$ passes in a toppling process (we assumed $p \leq \floor{n/2}$). By symmetry this implies that $\pi_{n+1-p},\pi_{n+2-p},\ldots, \pi_{n}$ are elements from the set $\{n+1-p, \ldots, n\}$, and completes the proof.
\end{proof}

We now prove \cref{lem: topp_lrm}. Let $\pi$ be the $p$-resultant permutation that is the concatenation of the permutation $\pi^{\text{left}}$ of $S_{n-p}$ and $\pi^{\text{right}}$ of $S_p$.

\begin{proof}[Proof of \cref{lem: topp_lrm}]

It suffices to show that in a configuration that topples to $\pi$, the relative order of elements that are not left-to-right maxima in $\pi^{\text{left}}$ is the same as in $\pi^{\text{left}}$ itself. 
We will show that these elements have a position in the initial configuration to the right of position $p$. Similarly, the relative order of elements that are not right-to-left minima in $\pi^{\text{right}}$ is the same as in $\pi^{\text{right}}$ itself. These elements have a position in the initial configuration to the left of position $p$. By symmetry it is enough to show the first part of the statement. 

We will continue to use the notation from the proof of \cref{thm:p-resultant-perms} above.

We know that $\pi_1=\min S_0, \pi_2 =\min S_1$ and so on. If $\pi_2<\pi_1$ (so that $\pi_2$ is not a left-to-right maximum), then $\pi_2\in S_1\setminus S_0$. Note also that since $S_1\setminus S_0$ contains only at most one element, we have actually $\{\pi_2\}=S_1\setminus S_0$. Moreover, $\pi_2$ has to be at the $(p+1)$'th before the first pass. The same argument can be used to show that if both $\pi_2,\pi_3<\pi_1$, then $\{\pi_3\} = S_2\setminus S_1 $ and that $\pi_3$ was at the position $(p+1)$ before the second pass and position $(p+2)$ before the first pass.

In general, let $k$ be the minimal index such that $\pi_{k}>\pi_1$ but $\pi_{i}<\pi_1$ for all $1 < i < k$. Then $\{\pi_{i+1}\} = S_{i}\setminus S_{i-1}$ and $\pi_{i+1}$ is in the $(p+1)$'th site before the $i$'th pass. 

Hence, the elements $\pi_2,\ldots, \pi_{k-1}$ had to be in the initial configuration on the sites $p+1,\ldots, p+{k-3}$ in this order. Note that $\pi_k$ could be contained either in $S_0$ or only in $S_{k-1}$. 
For the elements between $\pi_k$ and the next greater element, we  argue the same way starting from $S_k$ and the $k$'th pass. 
Continuing this way completes the proof.
\end{proof}

We can now enumerate the number of configurations toppling to a given resultant permutation.

\begin{proof}[Proof of \cref{thm:p-resultant-configs}]
Let $\pi \in S_{n+1}$ be a $p$-resultant permutation with $i$ left-to-right maxima in $\pi^{\text{left}}$ and $j$ right-to-left minima in $\pi^{\text{right}}$. 
We will write $\pi = \pi^{\text{left}} \oplus \pi^{\text{right}}$.
By \cref{lem: topp_lrm}, the relative order of elements which are not left-to-right maxima/right-to-left minima is fixed, and they do not influence the number of possible initial configurations that topple to $\pi$. 

Let $\pi_1=\pi_{\ell_1}<\pi_{\ell_2}<\cdots <\pi_{\ell_i}$ be the left-to-right maxima in $\pi^{\text{left}}$ and $\pi_{r_1}<\pi_{r_2}<\cdots< \pi_{r_{j}}=\pi_{n+1}$ the right-to-left minima in $\pi^{\text{right}}$.
We now define a bijection, $\phi$,  between the set of configurations $C_{\pi}$ that topple to $\pi$ 
with left-to-right maxima $\pi_1=\pi_{\ell_1}<\pi_{\ell_2}<\cdots <\pi_{\ell_i}$ left of $p$ and  right-to-left minima $\pi_{r_1}<\pi_{r_2}<\cdots< \pi_{r_{j}}=\pi_{n+1}$ right of $p$
and the set of $j$-toppleable configurations $C\in \mathcal{S}(i+j-1,j)$ as follows. 

Given a configuration $C_{\pi}\in \mathcal{S}(n,p)$ that topple to the permutation $\pi=\pi^{\text{left}} \oplus \pi^{\text{right}}$ construct a configuration on $i+j$ having two chips on the site $j$ as follows:
\begin{itemize}
	\item[1.] Delete all the chips (and their sites) that are neither left-to-right maxima of $\pi^{\text{left}}$ nor right-to left minima of $\pi^{\text{right}}$.
	\item[2.] Relabel naturally the left-to-right maxima $\phi(\pi_{\ell_k}) = k $  and the right-to-left minima $\phi(\pi_{r_{j}})=i+j$. 
\end{itemize} 
Notice that $\phi$ acting on the resultant configuration gives the identity permutation in $S_{i+j}$.

We prove that $\phi$ is a bijection by explicitly constructing its inverse as follows. Given a $j$-toppleable configuration $C\in \mathcal{S}(i+j-1,j)$ and a permutation $\pi=\pi^{\text{left}} \oplus \pi^{\text{right}}$ such that $\pi^{\text{left}}$ is a permutation of $[n-p+1]$ with $i$ left-to-right maxima, $\pi_1=\pi_{\ell_1}<\pi_{\ell_2}<\cdots <\pi_{\ell_i}$, and $\pi^{\text{right}}$ with $j$ right-to-left minima,  $\pi_{r_1}<\pi_{r_2}<\cdots <\pi_{r_{j}}=\pi_{n+1}$, construct a configuration $C_{\pi}$ as follows:
\begin{itemize}
	\item[1.] Relabel the chips $k$ with $1\leq k\leq i$ by $\pi_{\ell_k}$ and the chips $k$ with $i<k\leq i+j$ by $\pi_{r_{k-i}}$. 
	\item[2.] For $1 \leq k \leq i-1$, insert between the $k$'th and $(k+1)$'th sites in $C$ the chips 	$\pi_{\ell_k+1}$, $\pi_{\ell_k+2}$, $\ldots$, $\pi_{\ell_{k+1}-1}$ in the same relative order. 
Extend the same way the original configurations by the non-right-to-left minimas of $\pi^{\text{right}}$ on the left hand side of the $j$'th site. Similarly, for $2 \leq k \leq j$, insert between the $(k-1)$'th and $k'$th sites the chips $\pi_{r_{k-1}-1}$, $\pi_{r_{k-1}-2}$, $\ldots$, $ \pi_{r_{k}+1}$ in the same relative order. 
\end{itemize}  
 From the proof of \cref{lem: topp_lrm} it follows that $\phi$ is a bijection.  
Now, the number of $j$-toppleable configurations in $\mathcal{S}(i+j-1,j)$ is $B_{i,j}/2$ by \cref{thm: top-pB}.
\end{proof}

\begin{proof}[Proof of \cref{thm:p-resultant-perms-r}]

We will prove the first part of the claim. The second then follows by the symmetry in \cref{prop:symm}.
So, suppose $r\leq n-p$.
From \cref{thm:p-resultant-perms}, we know that $r \in \pi^{\text{left}}$. It remains to prove that $r$ is a left-to-right maximum therein.

Since $r$ is on the $p$'th site, $r\in S_0$. If $r=\min S_0=\pi_1$, we are done, since $\pi_1$ is by definition a left-to-right maximum. If $r \neq \pi_1$ there is an index $1<i\leq n-p$ such that  $r=\pi_i$. We need to show that all the entries $\pi_1,\pi_2,\ldots,\pi_{i-1} $ are smaller than $r$.
	
Let $j^*$ be the least index such that $r\in S_0,S_1,\ldots, S_{j^*-1}$ but $r \notin S_{j^*}$, i.e., $r$ topples in the $j^*$'th pass out of the left part. Then, $\pi_1=\min S_0, \ldots, \pi_{j^*}=\min S_{j^*-1}$ are all smaller than $r$.
Before the $j^*$'th pass, $r$ is at the $p$'th site with a smaller chip and on the $(p+1)$'th site there is also a smaller chip than $r$, say $d$. So we have $S_{j^*}=S_{j^*-1}\setminus \{\pi_{j^*},r\}\cup \{d\},$ with $d<r$. This implies $\pi_{j^*+1}=\min S_{j^*}\leq d<r$. 
	
Now let $i^*$ be the first index such that $S_{j^*}$, $S_{j^*-1}$, $\ldots$, $S_{i^*-1}$ do not contain the chip $r$, but $r\in S_{i^*}$, i.e., the chip $r$ topples back to the left part in the $i^*$'th pass.
Note that each chip in $S_j\setminus S_{j-1}$ 
with $j^*<j<i^*$ joins at least once the site with $r$ and, hence there are necessarily smaller than $r$ (otherwise $r$ would topple to the left by the toppling rule). This again implies that $\min S_{j^*}=\pi_{j^*+1}$, $\min S_{j^*+1}=\pi_{j^*+2}$, $\ldots$, $\min S_{i^*-1}=\pi_{i^*}$ are all smaller than $r$.
If $\min S_{i^*}=r$, we are done. If not we can repeat the above argument with analogous definitions of $j^{**}$ (as the first pass after $i^*$'th when $r$ topples out of the left part) and $i^{**}$ (as the first pass when $r$ topples back to the left part). Continuing this way, we argue that $r$ has to be a left-to-right maximum in $\pi^{\text{left}}$, completing the proof.
\end{proof}

\section*{Acknowledgements}

The first author (AA) was partially supported by the UGC Centre for Advanced Studies and by Department of Science and Technology grant EMR/2016/006624. 

We thank the anonymous referee for comments. The second author would
also like to thank Toshiki Matsusaka for helpful comments about
poly-Bernoulli numbers. 

\bibliographystyle{alpha}
\bibliography{bibliography}

\begin{thebibliography}{GHMP21}

\bibitem[AHT20]{AHT}
Arvind Ayyer, Daniel Hathcock, and Prasad Tetali.
\newblock Toppleable permutations, excedances and acyclic orientations, 2020.

\bibitem[AK99]{ArakawaKaneko}
Tsuneo Arakawa and Masanobu Kaneko.
\newblock Multiple zeta values, poly-{B}ernoulli numbers, and related zeta
  functions.
\newblock {\em Nagoya Math. J.}, 153:189--209, 1999.

\bibitem[BH15]{BH}
Be\'{a}ta B\'{e}nyi and P\'{e}ter Hajnal.
\newblock Combinatorics of poly-{B}ernoulli numbers.
\newblock {\em Studia Sci. Math. Hungar.}, 52(4):537--558, 2015.

\bibitem[BH17]{benyi-hajnal-2017}
Be\'{a}ta B\'{e}nyi and P\'{e}ter Hajnal.
\newblock Combinatorial properties of poly-{B}ernoulli relatives.
\newblock {\em Integers}, 17:Paper No. A31, 26, 2017.

\bibitem[Bre08]{Brewbaker}
Chad Brewbaker.
\newblock A combinatorial interpretation of the poly-{B}ernoulli numbers and
  two {F}ermat analogues.
\newblock {\em Integers}, 8:A02, 9, 2008.

\bibitem[CGS14]{cameron2014acyclic}
P.~J. Cameron, C.~A. Glass, and R.~U. Schumacher.
\newblock Acyclic orientations and poly-bernoulli numbers.
\newblock {\em arXiv preprint arXiv:1412.3685}, 2014.

\bibitem[dALN15]{Lundberg}
Rodrigo~Ferraz de~Andrade, Erik Lundberg, and Brendan Nagle.
\newblock Asymptotics of the extremal excedance set statistic.
\newblock {\em European J. Combin.}, 46:75--88, 2015.

\bibitem[FK21]{klivans-felzenszwalb-2021}
Pedro Felzenszwalb and Caroline Klivans.
\newblock Flow-firing processes.
\newblock {\em J. Combin. Theory Ser. A}, 177:105308, 18, 2021.

\bibitem[GHMP19]{galashin-et-al-2019}
Pavel Galashin, Sam Hopkins, Thomas McConville, and Alexander Postnikov.
\newblock Root system chip-firing {I}: interval-firing.
\newblock {\em Math. Z.}, 292(3-4):1337--1385, 2019.

\bibitem[GHMP21]{Galashin2021}
Pavel Galashin, Sam Hopkins, Thomas McConville, and Alexander Postnikov.
\newblock Root system chip-firing ii: Central firing.
\newblock 13:10037--10072, 2021.

\bibitem[GKP94]{knuth-graham-patashnik-1994}
Ronald~L. Graham, Donald~E. Knuth, and Oren Patashnik.
\newblock {\em Concrete mathematics}.
\newblock Addison-Wesley Publishing Company, Reading, MA, second edition, 1994.
\newblock A foundation for computer science.

\bibitem[HMP17]{HMP}
Sam Hopkins, Thomas McConville, and James Propp.
\newblock Sorting via chip-firing.
\newblock {\em Electron. J. Combin.}, 24(3):Paper No. 3.13, 20, 2017.

\bibitem[HP19]{hopkins-postnikov-2019}
Sam Hopkins and Alexander Postnikov.
\newblock A positive formula for the {E}hrhart-like polynomials from root
  system chip-firing.
\newblock {\em Algebr. Comb.}, 2(6):1159--1196, 2019.

\bibitem[Kan97]{KanekoIntro}
Masanobu Kaneko.
\newblock Poly-{B}ernoulli numbers.
\newblock {\em J. Th\'{e}or. Nombres Bordeaux}, 9(1):221--228, 1997.

\bibitem[Kin06]{king-2006}
Andrew King.
\newblock Generating indecomposable permutations.
\newblock {\em Discrete Math.}, 306(5):508--518, 2006.

\bibitem[KKL13]{KimKrotovLee}
Hyun~Kwang Kim, Denis~S. Krotov, and Joon~Yop Lee.
\newblock Matrices uniquely determined by their lonesums.
\newblock {\em Linear Algebra Appl.}, 438(7):3107--3123, 2013.

\bibitem[KL20]{klivans-liscio-2020+}
Caroline Klivans and Patrick Liscio.
\newblock Confluence in labeled chip-firing.
\newblock {\em arXiv preprint arXiv:2006.12324}, 2020.

\bibitem[Kla03]{klazar-2003}
Martin Klazar.
\newblock Irreducible and connected permutations.
\newblock {\em Institut teoretick{\'e} informatiky (ITI) Series}, (122), 2003.

\bibitem[KLM21]{Lundberg_Khera}
Jessica Khera, Erik Lundberg, and Stephen Melczer.
\newblock Asymptotic enumeration of lonesum matrices.
\newblock {\em Adv. in Appl. Math.}, 123:102118, 17, 2021.

\bibitem[Knu73]{knuth-taocp3}
Donald~E. Knuth.
\newblock {\em The art of computer programming. {V}olume 3}.
\newblock Addison-Wesley Publishing Co., Reading, Mass.-London-Don Mills, Ont.,
  1973.
\newblock Sorting and searching, Addison-Wesley Series in Computer Science and
  Information Processing.

\bibitem[Lau07]{Launois}
St\'{e}phane Launois.
\newblock Combinatorics of {$\mathscr H$}-primes in quantum matrices.
\newblock {\em J. Algebra}, 309(1):139--167, 2007.

\bibitem[LV78]{lovasz-vesztergombi-1978}
L\'{a}szl\'{o} Lov\'{a}sz and Katalin Vesztergombi.
\newblock Restricted permutations and {S}tirling numbers.
\newblock In {\em Combinatorics ({P}roc. {F}ifth {H}ungarian {C}olloq.,
  {K}eszthely, 1976), {V}ol. {II}}, volume~18 of {\em Colloq. Math. Soc.
  J\'{a}nos Bolyai}, pages 731--738. North-Holland, Amsterdam-New York, 1978.

\bibitem[OEI20]{OEIS}
OEIS.
\newblock The {O}n-{L}ine {E}ncyclopedia of {I}nteger {S}equences.
\newblock Published electronically at \url{http://oeis.org}, 2020.

\bibitem[Sjo07]{Sjostrand}
Jonas Sjostrand.
\newblock Bruhat intervals as rooks on skew {F}errers boards.
\newblock {\em J. Combin. Theory Ser. A}, 114(7):1182--1198, 2007.

\bibitem[Ves74]{vesztergombi-1974}
Katalin Vesztergombi.
\newblock Permutations with restriction of middle strength.
\newblock {\em Studia Sci. Math. Hungar.}, 9:181--185 (1975), 1974.

\end{thebibliography}

\end{document}